\definecolor{armygreen}{rgb}{0.29, 0.33, 0.13}
\definecolor{auburn}{rgb}{0.43, 0.21, 0.1}
\definecolor{burgundy}{rgb}{0.5, 0.0, 0.13}
\definecolor{medium red}{rgb}{.490,.298,.337}
\definecolor{dark red}{rgb}{.235,.141,.161}
\definecolor{dark green}{rgb}{0.0,0.5,0.0}
\newtheorem{theorem}{Theorem}[section]
\newtheorem{proposition}{Proposition}[section]
\newtheorem{claim}{Claim}[section]
\newtheorem{lemma}{Lemma}[section]
\newtheorem{corollary}{Corollary}[section]
\theoremstyle{definition}
\newtheorem{definition}{Definition}[section]
\theoremstyle{definition}
\newtheorem{example}{Example}[section]
\theoremstyle{definition}
\newtheorem{remark}{Remark}[section]
\theoremstyle{definition}
\newtheorem{note}{Note}[section]
\theoremstyle{definition}
\newcommand*{\claimproofname}{Claim proof}
\newenvironment{claimproof}[1][\claimproofname]{\begin{proof}[#1]}{\end{proof}}
\tikzset{
	treenode/.style = {shape=rectangle, rounded corners,
		draw, align=center,
		top color=white, bottom color=blue!20},
	root/.style  = {shape=rectangle, rounded corners,
		draw, align=center,
		top color=white, bottom color=blue!20},
	root1/.style  = {shape=rectangle, rounded corners,
		draw, align=center,
		top color=white, bottom color=red!40},
	root2/.style  = {shape=rectangle, rounded corners,
		draw, align=center,
		top color=white, bottom color=red!20},
	env/.style  = {shape=rectangle, rounded corners,
		draw, align=center,
		top color=white, bottom color=blue!20},
}
\title{On percolation in a generalized backbend process\thanks{We thank Kumarjit Saha for helpful discussions.}}
\author{Pinaki Mandal\thanks{Corresponding author: Economic Research Unit, Indian Statistical Institute, Kolkata, India. E-mail: pnk.rana@gmail.com} \; and Souvik Roy\thanks{Economic Research Unit, Indian Statistical Institute, Kolkata, India. E-mail: souvik.2004@gmail.com.}}
\date{}
\begin{document} 	
	\maketitle

	\begin{abstract}
		We have generalized the idea of backbend in a nearest-neighbor oriented bond percolation process by considering a backbend sequence $\beta : \mathbb{Z}_+ \to \mathbb{Z}_+ \cup \{\infty\}$, and defining a $\beta$-backbend path from the origin as a path that never retreats further than $\beta(h)$ levels back from its record level $h$. We study the relationship between the critical probabilities of different percolation processes based on different backbend sequences on half-space, full-space, and half-slabs of the $d$-dimensional ($d \geq 2$) body-centered cubic (BCC) lattice. We also give sufficient conditions on the backbend sequences such that there will be no percolation at the critical probabilities of the corresponding percolation processes on half-space and full-space of the BCC lattice.	
	\end{abstract}

	\newpage

	\section{Introduction}\label{section intro}

	In this paper, we discuss an extension of the concept of nearest-neighbor oriented bond percolation with backbend on certain sublattices of the $d$-dimensional ($d \geq 2$) body-centered cubic (BCC) lattice. \citet{roy1998backbends} studied nearest-neighbor oriented bond percolation with backbend in the $d$-dimensional simple cubic lattice, where they worked with \textit{$b$-backbend paths} for some $b \in \mathbb{Z}_+$.\footnote{We denote by $\mathbb{Z}_+$ the set $\mathbb{N} \cup \{0\}$.} A $b$-backbend path never retreats further than $b$ levels back from its record level. We have generalized their idea of backbend as follows: instead of taking a constant value $b$ for permissible backbend of a path, we consider a \textit{backbend sequence} $\beta : \mathbb{Z}_+ \to \mathbb{Z}_+ \cup \{\infty\}$, and define a $\beta$-backbend path from the origin as a path that never retreats further than $\beta(h)$ levels back from its record level $h$. The generality of our framework allows us to embed important special cases; a $\beta$-backbend path can be an oriented path, a $b$-backbend path, or an ordinary (unoriented) path depending on the backbend sequence $\beta$.
	
	We study the relationship between the critical probabilities of different percolation processes based on different backbend sequences on half-space, full-space, and half-slabs of the BCC lattice. We also give sufficient conditions on the backbend sequences such that there will be no percolation at the critical probabilities of the corresponding percolation processes on half-space and full-space of the BCC lattice.

	\section{Model}\label{section model}

	We denote the $d$-dimensional ($d \geq 2$) BCC lattice by $(\mathbb{V}, \mathbb{E})$, where the vertex set $\mathbb{V} := \big\{x \in \mathbb{Z}^d : x_i \equiv x_j \mbox{ (mod 2)}$ for all $i, j \in \{1, \ldots, d\} \big\}$ and the edge set $\mathbb{E}$ contains all \emph{\textbf{unordered}} pairs $\langle x, y \rangle$ of vertices $x, y \in \mathbb{V}$ with $\vert x_1 - y_1 \vert = \cdots = \vert x_d - y_d \vert = 1$.\footnote{If we rotate the square lattice using the following rotational matrix $$\begin{bmatrix} 1 & -1 \\ 1 & 1 \\ \end{bmatrix},$$ we will get the 2-dimensional BCC lattice. Thus, $2$-dimensional BCC lattice is equivalent to the square lattice (see \citet{grimmett2013percolation} for the formal definition of the square lattice).} Each edge is independently \emph{\textbf{open}} with probability $p \in [0,1]$ and \emph{\textbf{closed}} with probability $1-p$. Let $\mathbb{P}_p$ denote the corresponding probability measure for the total configurations of all the edges.\footnote{More formally, we consider the following probability space. As \emph{sample space} we take $\Omega = \underset{e \in \mathbb{E}}{\prod} \hspace{1 mm} \{0,1\}$, points of which are represented as $\omega = \{\omega(e) : e \in \mathbb{E}\}$ and called \emph{configurations}; the value $w(e) = 0$ corresponds to $e$ being closed, and $w(e) = 1$ corresponds to $e$ being open. We take $\mathcal{F}$ to be the $\sigma$-algebra of subsets of $\Omega$ generated by the finite-dimensional cylinders. Finally, we take product measure with density $p$ on $(\Omega, \mathcal{F})$; this is the measure $$\mathbb{P}_p = \underset{e \in \mathbb{E}}{\prod} \hspace{1 mm} \mu_e$$ where $\mu_e$ is Bernoulli measure on $\{0,1\}$, given by $\mu_e(\omega(e) = 0) = 1-p$ and $\mu_e(\omega(e) = 1) = p$.}
	
	For $\widehat{\mathbb{V}} \subseteq \mathbb{V}$, a \emph{\textbf{path in $\widehat{\mathbb{V}}$}} is a sequence $x^0, x^1, \ldots, x^n$ of \emph{distinct} vertices such that $x^i \in \widehat{\mathbb{V}}$ and $\langle x^i, x^{i+1} \rangle \in \mathbb{E}$.
	Note that paths are self-avoiding by definition. We call a path \emph{\textbf{open}} if all of its edges are open.
	For a path $\pi = (x^0, \ldots, x^n)$, the \textbf{\textit{record level}} attained by the path $\pi$ till $x^i$, denoted by $h^i(\pi)$, is defined as $\underset{j \leq i}{\max} \hspace{1 mm} x^j_d$. In words, the record level attained by the path $\pi$ till $x^i$ is the maximum value of the $d$-th coordinates of vertices till $x^i$. 
	
	A \textbf{\textit{backbend sequence}} is a mapping $\beta : \mathbb{Z}_+ \to \mathbb{Z}_+ \cup \{\infty\}$. For ease of presentation, we write $\beta(i)$ as $\beta_i$. We introduce the notion of a $\beta$-backbend path.

	\begin{definition}\label{def beta backbend path}
		For $\widehat{\mathbb{V}} \subseteq \mathbb{V}$, $x \in \widehat{\mathbb{V}}$ with $x_d \geq 0$, and a backbend sequence $\beta$, we say a path $\pi = (x^0 = x, \ldots, x^n)$ in $\widehat{\mathbb{V}}$ is a \emph{\textbf{$\beta$-backbend path in $\widehat{\mathbb{V}}$ from $x$ to $x^n$}} if for every $i$,
		\begin{equation*}
			x^i_d \geq h^i(\pi) - \beta_{h^i(\pi)}.
		\end{equation*}
	\end{definition}

	In words, a $\beta$-backbend path never retreats further than $\beta_h$ levels back from its record level $h$. 
	Thus, if $\beta_i = 0$ for all $i \in\mathbb{Z}_+$, then the $\beta$-backbend path is an oriented path in the direction of the positive $d$-th coordinate axis (that is, in the direction of $(0, \ldots, 0, 1)$). If $\beta_i = b$ for all $i \in\mathbb{Z}_+$ and for some $b \in \mathbb{N}$, then the $\beta$-backbend path is a $b$-backbend path in the direction of the positive $d$-th coordinate axis.\footnote{See \citet{roy1998backbends} for the definition of a $b$-backbend path in the context of $d$-dimensional simple cubic lattice. Note that in their paper, the orientation is in the direction of $(1, \ldots, 1)$.} Finally, if $\beta_i = \infty$ for all $i \in\mathbb{Z}_+$, then the $\beta$-backbend path is an ordinary (unoriented) path.
	
	A backbend sequence $\beta$ is \textbf{\textit{$k$-cyclic}} for some $k \in \mathbb{N}$, if for all $i \in \{0, \ldots, k-1\}$, we have $\beta_{kn+i} = \beta_i \in \mathbb{Z}_+$ for all $n \in \mathbb{N}$ (that is, $\beta_i = \beta_{k+i} = \beta_{2k+i} = \cdots$ and $\beta_i \in \mathbb{Z}_+$). Note that a $1$-cyclic backbend sequence is a constant sequence in $\mathbb{Z}_+$.
	Let $\tilde{\beta}$ be a backbend sequence. We say that $\tilde{\beta}$ \textbf{\textit{converges to a $k$-cyclic backbend sequence $\beta$}} for some $k \in \mathbb{N}$, if for all $i \in \{0, \ldots, k-1\}$ we have $\underset{n \rightarrow \infty}{\lim} \hspace{1 mm} \tilde{\beta}_{kn+i} = \beta_i$, and say that $\tilde{\beta}$ \textbf{\textit{converges from below to a $k$-cyclic backbend sequence $\beta$}} for some $k \in \mathbb{N}$, if for all $i \in \{0, \ldots, k-1\}$ we have $\underset{n \rightarrow \infty}{\lim} \hspace{1 mm} \tilde{\beta}_{kn+i} = \beta_i$ and $\tilde{\beta}_{kn+i} \leq \beta_i$ for all $n \in \mathbb{Z}_+$.\footnote{Note that $\tilde{\beta}$ converges from below to a $k$-cyclic backbend sequence does \textit{not} mean that for any $i \in \{0,\ldots, k-1\}$, the subsequence $(\tilde{\beta}_{kn+i})_{n \in \mathbb{Z}_+}$ is monotonically increasing.} 
	For ease of reference, we call a backbend sequence \textit{$k$-cyclic in the limit} if it converges to a $k$-cyclic backbend sequence, and \textit{$k$-cyclic in the limit from below} if it converges from below to a $k$-cyclic backbend sequence.
	Note that a backbend sequence is $1$-cyclic in the limit if and only if it converges to a finite integer.

	Throughout this paper, we refer to the vertex $(0, \ldots, 0)$ as the \textit{origin}. For $\widehat{\mathbb{V}} \subseteq \mathbb{V}$ and a backbend sequence $\beta$, define the following random set of vertices:
	\begin{equation*}
		C^{\beta}_{\widehat{\mathbb{V}}} := \{x \in \widehat{\mathbb{V}} : \mbox{there is an open $\beta$-backbend path in $\widehat{\mathbb{V}}$ from the origin to $x$}\}.
	\end{equation*}
	
	For $p \in [0,1]$, $\widehat{\mathbb{V}} \subseteq \mathbb{V}$, and a backbend sequence $\beta$, the \textbf{\textit{percolation probability}} is defined as
	\begin{equation*}
		\theta^{\beta}_{\widehat{\mathbb{V}}}(p) := \mathbb{P}_p \big(|C^{\beta}_{\widehat{\mathbb{V}}}| = \infty \big).
	\end{equation*}
	
	For $\widehat{\mathbb{V}} \subseteq \mathbb{V}$ and a backbend sequence $\beta$, the \textbf{\textit{critical probability}} is defined as
	\begin{equation*}
		p^{\beta}_c (\widehat{\mathbb{V}}) := \sup \hspace{1 mm} \{p : \theta^{\beta}_{\widehat{\mathbb{V}}}(p) = 0\}.
	\end{equation*}
	
	As we have explained earlier, by the definition of a $\beta$-backbend path, the critical probability $p^{\beta}_c (\widehat{\mathbb{V}})$ becomes the same as the critical probability of the (i) oriented percolation process in the direction of the positive $d$-th coordinate axis when $\beta_i = 0$ for all $i \in\mathbb{Z}_+$, (ii) $b$-backbend percolation process in the direction of the positive $d$-th coordinate axis when $\beta_i = b$ for all $i \in\mathbb{Z}_+$ and for some $b \in \mathbb{N}$, and (iii) ordinary (unoriented) percolation process when $\beta_i = \infty$ for all $i \in\mathbb{Z}_+$. 
	
	For $\widehat{\mathbb{V}} \subseteq \mathbb{V}$, we denote the critical probability of the oriented percolation process on $\widehat{\mathbb{V}}$ by $p^{0}_c (\widehat{\mathbb{V}})$, and the critical probability of the ordinary percolation process on $\widehat{\mathbb{V}}$ by $p_c (\widehat{\mathbb{V}})$.

	\section{Results}

	Let $\mathbb{H} := \{x \in \mathbb{V} : x_d \geq 0\}$ denote the \textit{half-space}. For all $l \in \mathbb{N}$ and all $e \in \{2, \ldots, d\}$, let $\mathbb{Q}_l^e := \{x \in \mathbb{V} : x \in [-l,l]^{d-e} \times \mathbb{Z}^{e-1} \times \mathbb{Z}_+\}$ denote an \textit{$e$-dimensional half-slab}.\footnote{Note that for all $l \in \mathbb{N}$, $\mathbb{Q}_l^d = \mathbb{H}$.} For all $t \in \mathbb{N}$, let $\mathbb{S}_t := \{x \in \mathbb{V} : 0 \leq x_d \leq t\}$ denote a \textit{$d-1$-dimensional slab}. Furthermore, for all $l_1, \ldots, l_d, r_1, \ldots, r_d \in \mathbb{Z} \cup \{-\infty, \infty\}$ with $l_i \leq r_i$ for all $i = 1, \ldots, d$, let us define
	\begin{equation*}
		B \Big(\underset{i=1}{\overset{d}{\prod}}[l_i, r_i] \Big) := \Big\{x \in \mathbb{V} : x \in \underset{i=1}{\overset{d}{\prod}}[l_i, r_i] \Big\}.
	\end{equation*}
	For ease of presentation, we denote $B \Big(\underset{i=1}{\overset{d-1}{\prod}}[l_i, r_i] \times [l,l] \Big)$ by $B \Big(\underset{i=1}{\overset{d-1}{\prod}}[l_i, r_i] \times l \Big)$.
	
	Furthermore, following our notational terminology, for $\widehat{\mathbb{V}} \subseteq \mathbb{V}$, $\mathbb{V}' \subseteq \mathbb{H}$, and a backbend sequence $\beta$, define the random set of vertices 
	\begin{equation*}
		C^{\beta}_{\widehat{\mathbb{V}}}(\mathbb{V}') := \Big\{y \in \widehat{\mathbb{V}} : \mbox{ there is an open $\beta$-backbend path in } \widehat{\mathbb{V}} \mbox{ from some } x \in \mathbb{V}' \mbox{ to $y$} \Big\}.
	\end{equation*}

	In what follows, we present a technical result which we will use in proving our main results of the paper.

	\begin{proposition}\label{prop percolation at smaller probability}
		Let $\beta$ be a $\hat{k}$-cyclic backbend sequence for some $\hat{k} \in \mathbb{N}$. Suppose $0 < p < 1$ and $\theta^{\beta}_{\mathbb{H}}(p) > 0$. Then, there exist $l, r \in \mathbb{N}$ and $0 < \delta <p $ such that
		\begin{equation*}
			\mathbb{P}_{p - \delta} \bigg( \Big| C^{\beta}_{\mathbb{Q}_{l}^2} \Big( B \big([-r, r]^{d-1} \times 0 \big) \Big) \Big| = \infty \bigg) > 0.
		\end{equation*} 
	\end{proposition}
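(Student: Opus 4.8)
The plan is to prove this by a Grimmett--Marstrand-type block renormalization, carried out inside a half-slab and adapted to $\beta$-backbend paths; the $\hat k$-cyclicity will be used to turn ``shifting upward by one period'' into a genuine renewal operation, and the boundedness of $\beta$ (automatic, since $\beta$ being $\hat k$-cyclic gives $M:=\max_{0\le i<\hat k}\beta_i<\infty$) will make all the relevant connection events depend on only finitely many edges. Fix an integer $m$ that is a common multiple of $\hat k$ and $2$, to be chosen large; then $\beta_{h+m}=\beta_h$ for all $h\ge 0$, and translation by $m$ in the $d$-th coordinate preserves both $\mathbb{V}$ and $\mathbb{P}_p$. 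Two structural facts drive the argument. First, concatenation is legal: if a path is obtained by joining $\beta$-backbend paths, the $j$-th of which lies in the levels $[jm,(j+1)m]$ and starts at level $jm$, then the concatenation is a $\beta$-backbend path from level $0$, because once the path has crossed level $(j+1)m$ its record level is $\ge (j+1)m$ and, by $m$-periodicity, the constraints in the next piece coincide with those already in force (after the $jm$-shift). Second, since the BCC lattice has no edges within a level, the slabs $B([-N,N]^{d-1}\times[jm,(j+1)m])$, $j\ge0$, have pairwise disjoint edge sets, so events referring to different such slabs are independent; and because $\beta$ is $m$-periodic, the $\mathbb{P}_p$-probability of any event defined by the configuration in such a slab ``in terms of record levels measured from the slab floor'' is the same for every $j$.

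The first substantive step is a \emph{climbing lemma}: if $\theta^\beta_{\mathbb{H}}(p)>0$ then $\mathbb{P}_p\big(|C^\beta_{\mathbb{H}}|=\infty \text{ and } C^\beta_{\mathbb{H}}\subseteq\mathbb{S}_t \text{ for some }t\big)=0$, so with probability $\theta^\beta_{\mathbb{H}}(p)>0$ the cluster of the origin attains arbitrarily high record levels. To prove this I would fix $t$ and work on the event that $C^\beta_{\mathbb{H}}$ is infinite and contained in $\mathbb{S}_t$: then the cluster has infinitely many vertices in the topmost band of $\le M+1$ levels it occupies, and conditionally on the part of the cluster strictly below that band the ``upward'' edges emanating from those vertices are unexamined and independent, each opening a legal $\beta$-backbend route to a strictly higher level with probability bounded below (by $1-(1-p)^{2^{d-1}}$); by the second Borel--Cantelli lemma a breakthrough then occurs almost surely, contradicting containment in $\mathbb{S}_t$. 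Using the climbing lemma and that a path reaching record level $m$ lies in a finite box, one gets, for each large $m$, a constant $\eta>0$ with $\mathbb{P}_p\big(\text{the origin is joined by an open }\beta\text{-backbend path inside }B([-N,N]^{d-1}\times[0,m])\text{ to a vertex at record level }m\big)\ge\eta$ for $N$ large. The main obstacle is then to upgrade this, in Grimmett--Marstrand fashion, to a \emph{seed estimate}: given any $\varepsilon>0$ and any prescribed $K$, for suitable $N,m$, with $\mathbb{P}_p$-probability $\ge1-\varepsilon$ the box $B([-N,N]^{d-1}\times[0,m])$ contains $K$ vertex-disjoint open $\beta$-backbend paths from (a prescribed region of) its bottom face to $K$ distinct vertices at record level $m$; this uses horizontal translation invariance, the FKG inequality, the square-root trick, and a sprinkling/exploration argument.

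Granting the seed estimate, I would carry out the renormalization with $l:=N$ and $r:=N$, noting that every block $B\big([-l,l]^{d-2}\times[kN-N,kN+N]\times[jm,(j+1)m]\big)$, $(j,k)\in\mathbb{Z}_+\times\mathbb{Z}$, as well as the seed box $B([-r,r]^{d-1}\times0)$, is contained in $\mathbb{Q}_l^2$. Call $(j,k)$ \emph{good} if it contains (appropriately anchored at the region where an incoming crossing can arrive) $K$ vertex-disjoint open $\beta$-backbend crossings from its bottom face to level $(j+1)m$ landing in the overlap regions with blocks $(j+1,k-1)$ and $(j+1,k+1)$; the multiplicity $K$ is what guarantees, in the usual way, that at least one crossing lands where the next block's crossings can be started. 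By the seed estimate the probability that $(j,k)$ is good is $\ge1-\varepsilon$; by the two structural facts above this field of good sites is finite-range dependent with common marginal, so a standard stochastic-domination lemma (Liggett--Schonmann--Stacey) shows it dominates supercritical two-dimensional oriented site percolation, whose critical value is $<1$. Hence with positive $\mathbb{P}_p$-probability there is an infinite oriented path of good blocks from $(0,0)$, and legality of concatenation turns it into an infinite open $\beta$-backbend path in $\mathbb{Q}_l^2$ issuing from $B([-r,r]^{d-1}\times0)$.

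Finally, to get the strictly smaller parameter $p-\delta$: the event ``$(j,k)$ is good'' depends on finitely many edges, so $q\mapsto\mathbb{P}_q((j,k)\text{ good})$ is a polynomial and hence continuous, and at $q=p$ its value exceeds the threshold of the comparison model with room to spare; thus there is $\delta\in(0,p)$ for which $\mathbb{P}_{p-\delta}((j,k)\text{ good})$ still exceeds that threshold, and rerunning the domination argument at parameter $p-\delta$ yields $\mathbb{P}_{p-\delta}\big(|C^\beta_{\mathbb{Q}_l^2}(B([-r,r]^{d-1}\times0))|=\infty\big)>0$, as required.
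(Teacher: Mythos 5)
Your proposal follows the same broad Bezuidenhout--Grimmett/Grimmett--Marstrand template as the paper: grow a seed, establish many disjoint crossings of a large box, renormalize to good blocks, compare with supercritical oriented site percolation, and use finite-dependence plus continuity of finite-volume events to sprinkle down to $p-\delta$. The geometric picture (sideways-shifting blocks inside a two-dimensional half-slab, $\hat k$-periodicity turning a vertical shift by a multiple of $\hat k$ into a renewal, concatenation legality, level-separated slabs having disjoint edge sets) matches the paper's events $G_i^{\pm}(x)$ and tilted regions $\mathcal R^{\pm}(z)$. That part is sound.

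There are, however, two real gaps. First, the argument you give for the ``climbing lemma'' is incorrect as stated. You assert that if $C^\beta_{\mathbb{H}}$ is infinite and contained in some $\mathbb{S}_t$, then it must have infinitely many vertices in the topmost band of $\le M+1$ levels it occupies. That is false: a $\beta$-backbend path constrains the level of a vertex only relative to the record level of \emph{that} path, not relative to the global maximum level of the whole cluster, so the cluster can have a single finite branch near its top while being infinite because it spreads horizontally at low levels. Moreover, even when there are infinitely many top-band vertices, the upward edges emanating from them are not ``unexamined'' once you have conditioned on $C^\beta_{\mathbb{H}}\subseteq\mathbb{S}_t$ (that conditioning already forces them all closed), so the second Borel--Cantelli step does not apply in the way you describe. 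The paper avoids both problems by splitting into the two cases $\theta^\beta_{\mathbb{S}_t}(p)=0$ for all $t$ versus $\theta^\beta_{\mathbb{S}_t}(p)>0$ for some $t$. In the first case it proves the much sharper Claim~A.1 (the number of cluster vertices at the top of the slab tends to infinity) via an honest level-by-level exploration with stopping times; in the second case, it never needs a climbing statement at all, because slab percolation is used directly to seed the construction through the \emph{side} faces rather than the top face. Without some version of this case distinction, your proof plan does not go through.

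Second, the ``seed estimate'' (for given $K$ and $\varepsilon$, with probability $\ge 1-\varepsilon$ there are $K$ disjoint open $\beta$-backbend crossings from the seed region to level $m$ inside a finite box) is the technical heart of the proposition, occupying Lemmas~A.1--A.2 and A.6--A.7 and several pages in the paper, with separate treatments of top-face and side-face hitting in the two cases; you state it and invoke ``FKG, square-root trick, sprinkling,'' but do not sketch why the square-root/sprinkling machinery produces $\beta$-backbend (as opposed to ordinary) crossings that remain inside a box of controlled height in a way consistent with the periodic backbend constraint. That adaptation is where the actual work is, and it is not present in the proposal.
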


	The proof of this proposition is relegated to Appendix \ref{appendix proof of prop percolation at smaller probability}.

	\subsection{Results on the half-space}\label{subsection results on halfspace}

	Our next theorem establishes a relation between percolation probabilities of a $\tilde{\beta}$-backbend percolation process and a $\beta$-backbend percolation process on the half-space, where $\beta$ is $k$-cyclic and $\tilde{\beta}$ converges from below to $\beta$.

	\begin{theorem}\label{theorem iff condition half space}
		Suppose a backbend sequence $\tilde{\beta}$ converges from below to a $k$-cyclic backbend sequence $\beta$ for some $k \in \mathbb{N}$. Then, for all $p \in [0,1]$, we have $\theta^{\tilde{\beta}}_{\mathbb{H}}(p) > 0$ if and only if $\theta^{\beta}_{\mathbb{H}}(p) > 0$.
	\end{theorem}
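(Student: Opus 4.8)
Here is how I would approach Theorem \ref{theorem iff condition half space}.

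The two implications are of very different character, and I would dispatch the easy one first. The implication $\theta^{\tilde\beta}_{\mathbb H}(p)>0\Rightarrow\theta^{\beta}_{\mathbb H}(p)>0$ is essentially free: ``$\tilde\beta$ converges from below to $\beta$'' together with the $k$-cyclicity of $\beta$ gives $\tilde\beta_h\le\beta_h$ for every $h\in\mathbb Z_+$, so every $\tilde\beta$-backbend path is a $\beta$-backbend path, whence $C^{\tilde\beta}_{\mathbb H}\subseteq C^{\beta}_{\mathbb H}$ configuration by configuration and $\theta^{\tilde\beta}_{\mathbb H}\le\theta^{\beta}_{\mathbb H}$ pointwise in $p$. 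The content is the reverse implication, which I would prove by a lifting argument built on Proposition \ref{prop percolation at smaller probability}. The cases $p\in\{0,1\}$ are trivial: at $p=0$ neither side percolates, and at $p=1$ both do, since the deterministic path $(0,\dots,0)\to(1,\dots,1)\to(0,\dots,0,2)\to(1,\dots,1,3)\to\cdots$ is a $\gamma$-backbend path in $\mathbb H$ from the origin for every backbend sequence $\gamma$. So assume $0<p<1$ and $\theta^{\beta}_{\mathbb H}(p)>0$. The one elementary fact I need from the convergence hypothesis is that, $\beta$ being $k$-cyclic, each sequence $(\tilde\beta_{kn+i})_n$ is a sequence of nonnegative integers converging to the integer $\beta_i$ and is therefore eventually equal to it; hence there is $N\in\mathbb N$ with $\tilde\beta_h=\beta_h$ for all $h\ge N$.

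Next comes the lifting. Let $l,r\in\mathbb N$ and $0<\delta<p$ be as furnished by Proposition \ref{prop percolation at smaller probability}, and fix $M\in\mathbb N$ with $kM$ even and $kM\ge\max\{N,r\}$. Translation by $v:=(0,\dots,0,kM)\in\mathbb V$ is a lattice automorphism leaving each $\mathbb P_q$ invariant, and — because $\beta$ is $k$-cyclic and $kM$ is a multiple of $k$ — it carries $\beta$-backbend paths to $\beta$-backbend paths. Thus, with $\mathbb Q^2_l+v=\{x\in\mathbb V:x_i\in[-l,l]\ (i\le d-2),\ x_{d-1}\in\mathbb Z,\ x_d\ge kM\}\subseteq\mathbb H$ and $B([-r,r]^{d-1}\times 0)+v=B([-r,r]^{d-1}\times kM)$ (using that $kM$ is even), the increasing event $A:=\{\,|C^{\beta}_{\mathbb Q^2_l+v}(B([-r,r]^{d-1}\times kM))|=\infty\,\}$ has $\mathbb P_{p-\delta}(A)>0$ by translation invariance and Proposition \ref{prop percolation at smaller probability}, and hence $\mathbb P_p(A)>0$.

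It then remains to reach this lifted cluster from the origin. The seed set $B([-r,r]^{d-1}\times kM)$ is \emph{finite}; for each vertex $y$ in it I would fix a deterministic path $\pi^y_0$ from the origin to $y$ that increases by $1$ in the $d$-th coordinate at every step and stays in $\{x\in\mathbb V:0\le x_d\le kM\}$ — such a monotone path exists because $|y_i|\le r\le kM$ and $y_i\equiv kM\pmod 2$, and, being strictly monotone in $x_d$, it is automatically self-avoiding and, since its record level at each vertex equals that vertex's $d$-th coordinate, it is a $\tilde\beta$-backbend path. Let $\Pi$ be the event that every edge of every $\pi^y_0$ is open; these finitely many edges each have an endpoint at level $\le kM-1$, whereas $A$ is measurable with respect to the edges both of whose endpoints lie at level $\ge kM$, so $\Pi$ and $A$ are independent and $\mathbb P_p(\Pi\cap A)=\mathbb P_p(\Pi)\,\mathbb P_p(A)>0$. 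On $\Pi\cap A$ I would check the inclusion $C^{\beta}_{\mathbb Q^2_l+v}(B([-r,r]^{d-1}\times kM))\subseteq C^{\tilde\beta}_{\mathbb H}$: if $z$ in the former set is reached from some $z^0$ in the seed set by an open $\beta$-backbend path $\sigma$ inside $\mathbb Q^2_l+v$, then concatenating $\pi^{z^0}_0$ with $\sigma$ yields a self-avoiding open path from the origin to $z$ in $\mathbb H$ (the two pieces meet only at $z^0$, one lying off $z^0$ at levels $\le kM-1$ and the other at levels $\ge kM$), and a record-level computation shows that the concatenated path's record level at any vertex of $\sigma$ equals the record level of $\sigma$ there, which is $\ge kM\ge N$; since $\tilde\beta=\beta$ from level $N$ on, the glued path is a $\tilde\beta$-backbend path. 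Hence $|C^{\tilde\beta}_{\mathbb H}|=\infty$ on $\Pi\cap A$, so $\theta^{\tilde\beta}_{\mathbb H}(p)\ge\mathbb P_p(\Pi\cap A)>0$, finishing the proof.

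The real substance is entirely in Proposition \ref{prop percolation at smaller probability}: its conclusion delivers a percolating cluster that is both seeded from a finite set and confined to $\{x_d\ge 0\}$, and it is precisely this confinement that lets a single translation by $v$ push the whole cluster above level $N$ — into the range where $\tilde\beta=\beta$, so that a $\beta$-backbend cluster there is automatically a $\tilde\beta$-backbend cluster — while finiteness of the seed set keeps $\mathbb P_p(\Pi)$ positive; the drop from $p$ to $p-\delta$ there costs nothing because $A$ is increasing. Granting that proposition, the points still needing care are the disjointness of edge sets underlying the independence of $\Pi$ and $A$ and the record-level bookkeeping for the glued path, both of which are routine.
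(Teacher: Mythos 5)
Your ``only-if'' direction is the same as the paper's: $\tilde{\beta}_h \le \beta_h$ for all $h$ gives a pointwise inclusion of backbend paths, hence $\theta^{\tilde\beta}_{\mathbb H} \le \theta^{\beta}_{\mathbb H}$. The ``if'' direction is where you diverge, and although your argument is correct, it uses far heavier machinery than the paper's. The paper does not invoke Proposition~\ref{prop percolation at smaller probability} here at all. Instead it argues directly: take $n^*$ so that $\tilde\beta_n = \beta_n$ for all $n \ge kn^*$; by $k$-cyclicity of $\beta$ and translation invariance of the product measure under the shift by any vertex $y$ with $y_d = kn^*$, one has $\mathbb P_p\big(|C^{\beta}_{\mathbb H + y}(\{y\})| = \infty\big) = \theta^{\beta}_{\mathbb H}(p) > 0$; then pick one such vertex $y^*$ reachable from the origin by an oriented path $\pi$, concatenate $\pi$ with any infinite $\beta$-backbend path in $\mathbb H + y^*$ from $y^*$ (this is a $\tilde\beta$-backbend path in $\mathbb H$ by exactly the record-level bookkeeping you carry out), and use independence of the two edge sets to conclude $\theta^{\tilde\beta}_{\mathbb H}(p) \ge \mathbb P_p(\pi\text{ open}) \cdot \theta^{\beta}_{\mathbb H}(p) > 0$. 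Your route instead extracts the much stronger conclusion of Proposition~\ref{prop percolation at smaller probability} (a seeded, half-slab-confined percolating cluster at a strictly smaller $p-\delta$), translates it up, exploits monotonicity to return to parameter $p$, and then wires the origin to a whole finite box of seed vertices. All the individual steps are sound — the parity requirement $kM$ even, the bound $kM \ge \max\{N,r\}$, the disjointness of edge sets for independence, and the record-level computation on the glued path — so the proof works, but you pay for the nontrivial Proposition~\ref{prop percolation at smaller probability} where the paper gets by with a single translation and one oriented path; the proposition is reserved in the paper for the genuinely harder Theorem~\ref{theorem no percolation at half space} on non-percolation at criticality. Your closing remark that ``the real substance is entirely in Proposition~\ref{prop percolation at smaller probability}'' is therefore misleading for this particular theorem.
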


	\begin{proof}[\textbf{Proof of Theorem \ref{theorem iff condition half space}}]
		\textbf{\textit{(If part)}} Suppose $\theta^{\beta}_{\mathbb{H}}(p) > 0$. It must be that $p > 0$. Since $\underset{n \rightarrow \infty}{\lim} \hspace{1 mm} \tilde{\beta}_{kn+i} = \beta_i$ for all $i \in \{0, \ldots, k-1\}$, there exists $n^* \in \mathbb{N}$ such that $\tilde{\beta}_{kn+i} = \beta_i$ for all $n \geq n^*$ and all $i \in \{0, \ldots, k-1\}$. Because $\beta$ is $k$-cyclic, by the construction of the $\beta$-backbend percolation process, we have for all $y \in \mathbb{H}$ with $y_d = kn^*$,
		\begin{equation}\label{equation important one halfspace}
			\mathbb{P}_{p} \Big( \big| C^{\beta}_{(\mathbb{H} + y)} (\{y\}) \big| = \infty \Big) = \theta^{\beta}_{\mathbb{H}}(p).\footnotemark
		\end{equation}
		\footnotetext{For $x \in \mathbb{V}$ and $\widehat{\mathbb{V}} \subseteq \mathbb{V}$, we denote by $\widehat{\mathbb{V}} + x$ the set $\{y + x : y \in \widehat{\mathbb{V}}\}$.}Choose $y^* \in \mathbb{H}$ with $y^*_d = kn^*$ such that there exists an oriented path $\pi$ from the origin to $y^*$.\footnote{For instance, $y^*$ can be taken as the vertex that is connected to the origin through the following oriented path: $((0, \ldots, 0), (1, \ldots, 1), (0, \ldots, 0, 2), (1, \ldots, 1, 3), \ldots, (0, \ldots, 0, kn^*))$ if $kn^*$ is even, and $((0, \ldots, 0), (1, \ldots, 1), (0, \ldots, 0, 2), \ldots, (1, \ldots, 1, kn^*))$ if $kn^*$ is odd.} As $p > 0$, the probability of $\pi$ being open is positive.\footnote{Note that since the path $\pi$ is oriented, it will have $kn^*$ edges, and hence $\mathbb{P}_p \big( \pi \mbox{ is open} \big) = p^{kn^*}$.}
		
		Since $\tilde{\beta}_n = \beta_n$ for all $n \geq kn^*$, $y^*_d = kn^*$, and $\pi$ is an oriented path from the origin to $y^*$, it follows that $\pi$ concatenated with any infinite $\beta$-backbend path in $\mathbb{H} + y^*$ from $y^*$ produces an infinite $\tilde{\beta}$-backbend path in $\mathbb{H}$ from the origin. Furthermore, by the construction of $\pi$, the event of $\pi$ being open is independent of the event $\Big\{ \omega \in \Omega : \big| C^{\beta}_{(\mathbb{H} + y^*)} (\{y^*\}) \big| = \infty \Big\}$. Combining these two facts with \eqref{equation important one halfspace}, we have $\theta^{\tilde{\beta}}_{\mathbb{H}}(p) \geq \mathbb{P}_p \big( \pi \mbox{ is open} \big) \times \theta^{\beta}_{\mathbb{H}}(p)$. Because $\mathbb{P}_p \big( \pi \mbox{ is open} \big) > 0$ and $\theta^{\beta}_{\mathbb{H}}(p) > 0$, this implies $\theta^{\tilde{\beta}}_{\mathbb{H}}(p) > 0$. This completes the proof of the ``if'' part of Theorem \ref{theorem iff condition half space}.\medskip
		\\
		\textbf{\textit{(Only-if part)}} Suppose $\theta^{\tilde{\beta}}_{\mathbb{H}}(p) > 0$. By the assumptions on $\tilde{\beta}$ and $\beta$, for all $i \in \{0, \ldots, k-1\}$, we have $\tilde{\beta}_{kn+i} \leq \beta_i$ for all $n \in \mathbb{Z}_+$. This implies that every $\tilde{\beta}$-backbend path in $\mathbb{H}$ from the origin is a $\beta$-backbend path in $\mathbb{H}$ from the origin. Therefore, $\theta^{\beta}_{\mathbb{H}}(p) \geq \theta^{\tilde{\beta}}_{\mathbb{H}}(p)$. Since $\theta^{\tilde{\beta}}_{\mathbb{H}}(p) > 0$, this implies $\theta^{\beta}_{\mathbb{H}}(p) > 0$. This completes the proof of the ``only-if'' part of Theorem \ref{theorem iff condition half space}.
	\end{proof}

	We obtain the following corollary (Corollary \ref{coro same critical percolation at half space}) from Theorem \ref{theorem iff condition half space}. It says that if a backbend sequence $\tilde{\beta}$ converges from below to a $k$-cyclic backbend sequence $\beta$, then on the half-space, the critical probability of $\tilde{\beta}$-backbend percolation process will be equal to that of $\beta$-backbend percolation process.

	\begin{corollary}\label{coro same critical percolation at half space}
		Suppose a backbend sequence $\tilde{\beta}$ converges from below to a $k$-cyclic backbend sequence $\beta$ for some $k \in \mathbb{N}$. Then, $p^{\tilde{\beta}}_c (\mathbb{H}) = p^{\beta}_c (\mathbb{H})$.
	\end{corollary}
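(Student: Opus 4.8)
The plan is to derive the equality of critical probabilities directly from the ``iff'' statement of Theorem~\ref{theorem iff condition half space} by unwinding the definition of $p^{\beta}_c(\mathbb{H})$. Recall that
\[
p^{\beta}_c(\mathbb{H}) = \sup\{p \in [0,1] : \theta^{\beta}_{\mathbb{H}}(p) = 0\},
\]
and likewise $p^{\tilde{\beta}}_c(\mathbb{H}) = \sup\{p \in [0,1] : \theta^{\tilde{\beta}}_{\mathbb{H}}(p) = 0\}$. So it suffices to show that the two sets $A := \{p \in [0,1] : \theta^{\tilde{\beta}}_{\mathbb{H}}(p) = 0\}$ and $A' := \{p \in [0,1] : \theta^{\beta}_{\mathbb{H}}(p) = 0\}$ coincide; once that is done, their suprema agree and we are finished.

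First I would invoke Theorem~\ref{theorem iff condition half space}, which asserts that for every $p \in [0,1]$ we have $\theta^{\tilde{\beta}}_{\mathbb{H}}(p) > 0$ if and only if $\theta^{\beta}_{\mathbb{H}}(p) > 0$. Taking contrapositives, and using that each percolation probability is a number in $[0,1]$ (so ``not positive'' is literally ``equal to $0$''), this is equivalent to: $\theta^{\tilde{\beta}}_{\mathbb{H}}(p) = 0$ if and only if $\theta^{\beta}_{\mathbb{H}}(p) = 0$, for every $p \in [0,1]$. Hence $p \in A \iff p \in A'$, that is $A = A'$, and therefore $\sup A = \sup A'$, which is exactly the claimed identity $p^{\tilde{\beta}}_c(\mathbb{H}) = p^{\beta}_c(\mathbb{H})$.

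Since the statement is a direct logical consequence of Theorem~\ref{theorem iff condition half space}, there is no genuine obstacle here; the only step that deserves to be stated with care is the contrapositive, which uses nothing beyond $\theta^{\beta}_{\mathbb{H}}(p),\, \theta^{\tilde{\beta}}_{\mathbb{H}}(p) \in [0,1]$. If one prefers a slightly more geometric phrasing, one can instead note that for any fixed backbend sequence the map $p \mapsto \theta_{\mathbb{H}}(p)$ is nondecreasing, so its zero set is an interval of the form $[0,p_c]$ or $[0,p_c)$ with right endpoint $p_c$; Theorem~\ref{theorem iff condition half space} forces the zero sets for $\tilde{\beta}$ and $\beta$ to be the same subset of $[0,1]$, hence to share this right endpoint, giving $p^{\tilde{\beta}}_c(\mathbb{H}) = p^{\beta}_c(\mathbb{H})$.
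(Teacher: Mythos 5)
Your proof is correct and is exactly the argument the paper intends: the corollary is stated as an immediate consequence of Theorem~\ref{theorem iff condition half space}, and your unwinding of the definition of critical probability via the equality of zero-sets (equivalently, the contrapositive of the ``iff'') is the standard way to deduce it.
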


	\begin{note}\label{note counter of same critical prob}
		It is worth mentioning that Corollary \ref{coro same critical percolation at half space} does \textit{not} hold if we relax its assumption by requiring that $\tilde{\beta}$ converges (not necessarily from below) to a $k$-cyclic backbend sequence $\beta$ (see Example \ref{example counter for convergent} for details). 
		However, for this case, using the same argument as for the ``if'' part of Theorem \ref{theorem iff condition half space}, we have $p^{\tilde{\beta}}_c (\mathbb{H}) \leq p^{\beta}_c (\mathbb{H})$.
	\end{note}

	\begin{example}\label{example counter for convergent}
		Suppose $d = 3$. 
		Let $\beta$ be a $1$-cyclic backbend sequence such that $\beta_i = 0$ for all $i \in \mathbb{Z}_+$.
		Suppose $\tilde{\beta}$ is a backbend sequence such that $\tilde{\beta}_0 = 0, \tilde{\beta}_1 = 1, \tilde{\beta}_2 = 2, \tilde{\beta}_3 = 3$, and $\tilde{\beta}_i = 0$ for all $i \geq 4$. Clearly, $\tilde{\beta}$ converges to $\beta$. By construction, $p^{\tilde{\beta}}_c (\mathbb{H}) \leq p_c (\mathbb{S}_3)$.
		Since $p_c (\mathbb{S}_3) = 0.21113018(38)$ (see \citet{gliozzi2005random} for details) and $p^{\beta}_c (\mathbb{H}) = 0.2873383(1)$ (see \citet{perlsman2002method} for details), the fact $p^{\tilde{\beta}}_c (\mathbb{H}) \leq p_c (\mathbb{S}_3)$ implies that $p^{\tilde{\beta}}_c (\mathbb{H}) < p^{\beta}_c (\mathbb{H})$.\footnote{Error bars in the last digit or digits are shown by numbers in parentheses. Thus, $0.21113018(38)$ signifies $0.21113018 \pm 0.00000038$.}
	\end{example}

	\begin{remark}\label{remark generalization for 2 dimension}
		In two dimensions, Corollary \ref{coro same critical percolation at half space} holds for the case where $\tilde{\beta}$ converges (not necessarily from below) to a $k$-cyclic backbend sequence $\beta$ satisfying an additional property. See Proposition \ref{prop generalize for 2 dimension} for details.
	\end{remark}

	The next theorem is our main result on half-space which says that if a backbend sequence is $k$-cyclic in the limit from below, then there will be no percolation on the half-space at the critical probability.

	\begin{theorem}\label{theorem no percolation at half space}
		Suppose a backbend sequence $\tilde{\beta}$ is $k$-cyclic in the limit from below for some $k \in \mathbb{N}$. Then, $\theta^{\tilde{\beta}}_{\mathbb{H}}(p^{\tilde{\beta}}_c (\mathbb{H})) = 0$.
	\end{theorem}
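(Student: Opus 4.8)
The plan is to first reduce the statement to the case where the backbend sequence is genuinely $k$-cyclic, and then run a Barsky--Grimmett--Newman style contradiction driven by Proposition~\ref{prop percolation at smaller probability}. Write $\beta$ for the $k$-cyclic limit of $\tilde\beta$. Since $\tilde\beta$ converges from below to $\beta$, Corollary~\ref{coro same critical percolation at half space} gives $p^{\tilde\beta}_c(\mathbb H)=p^{\beta}_c(\mathbb H)$, and Theorem~\ref{theorem iff condition half space} gives $\theta^{\tilde\beta}_{\mathbb H}(p)>0 \iff \theta^{\beta}_{\mathbb H}(p)>0$ for every $p\in[0,1]$. Hence $\theta^{\tilde\beta}_{\mathbb H}\big(p^{\tilde\beta}_c(\mathbb H)\big)=0$ is equivalent to $\theta^{\beta}_{\mathbb H}\big(p^{\beta}_c(\mathbb H)\big)=0$, so it suffices to prove the theorem assuming $\tilde\beta=\beta$ is $k$-cyclic. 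Set $p_c:=p^{\beta}_c(\mathbb H)$.

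Second, I would argue by contradiction, supposing $\theta^{\beta}_{\mathbb H}(p_c)>0$. One checks $0<p_c<1$: the inequality $p_c>0$ follows because $\theta^{\beta}_{\mathbb H}(0)=0$ (at $p=0$ the cluster of the origin is a single vertex), while $p_c<1$ follows because every oriented path from the origin is a $\beta$-backbend path (as $\beta_i\ge 0$), so the $\beta$-backbend cluster of the origin contains its oriented cluster, giving $p_c\le p^{0}_c(\mathbb H)<1$. Now $\beta$ is $k$-cyclic with $0<p_c<1$ and $\theta^{\beta}_{\mathbb H}(p_c)>0$, so Proposition~\ref{prop percolation at smaller probability} applies with $p=p_c$: there exist $l,r\in\mathbb N$ and $0<\delta<p_c$ with
\[
\mathbb P_{p_c-\delta}\Big(\big|C^{\beta}_{\mathbb Q^2_l}\big(B([-r,r]^{d-1}\times 0)\big)\big|=\infty\Big)>0.
\]
Because $B([-r,r]^{d-1}\times 0)$ is finite and $C^{\beta}_{\mathbb Q^2_l}\big(B([-r,r]^{d-1}\times 0)\big)=\bigcup_{x} C^{\beta}_{\mathbb Q^2_l}(\{x\})$, the union taken over base vertices $x$, there is a single base vertex $x$ (so $x\in\mathbb V$ and $x_d=0$) with $\mathbb P_{p_c-\delta}\big(|C^{\beta}_{\mathbb Q^2_l}(\{x\})|=\infty\big)>0$.

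Third, I would transport this statement back to the half-space. Since the record level $h^i(\pi)$ of a path depends only on $\pi$ and not on any ambient vertex set, every $\beta$-backbend path in $\mathbb Q^2_l$ is a $\beta$-backbend path in $\mathbb H$; hence $C^{\beta}_{\mathbb Q^2_l}(\{x\})\subseteq C^{\beta}_{\mathbb H}(\{x\})$, so $\mathbb P_{p_c-\delta}\big(|C^{\beta}_{\mathbb H}(\{x\})|=\infty\big)>0$. Next, translation by $-x$ is a lattice automorphism (as $\mathbb V$ is closed under addition and negation) that fixes $\mathbb H$ (because $x_d=0$), does not change any $d$-th coordinate, and therefore preserves all record levels; it thus carries $\beta$-backbend paths in $\mathbb H$ from $x$ to $\beta$-backbend paths in $\mathbb H$ from the origin. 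Since $\mathbb P_{p_c-\delta}$ is translation invariant, this yields $\theta^{\beta}_{\mathbb H}(p_c-\delta)=\mathbb P_{p_c-\delta}\big(|C^{\beta}_{\mathbb H}(\{x\})|=\infty\big)>0$. But by monotonicity of $\theta^{\beta}_{\mathbb H}(\cdot)$ the set $\{p:\theta^{\beta}_{\mathbb H}(p)=0\}$ is downward closed, hence contains $[0,p_c)\ni p_c-\delta$, so $\theta^{\beta}_{\mathbb H}(p_c-\delta)=0$ --- a contradiction. Therefore $\theta^{\beta}_{\mathbb H}(p_c)=0$, and by the reduction in the first step the theorem follows.

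The substantive work is all contained in Proposition~\ref{prop percolation at smaller probability} (the finite-size/block criterion), so I do not expect a real obstacle in the argument above; the only points requiring genuine care are structural, namely verifying that the $\beta$-backbend property is compatible both with enlarging the ambient region from the half-slab $\mathbb Q^2_l$ to $\mathbb H$ and with the level-preserving translation that moves the base vertex $x$ to the origin. Both are immediate once one observes that the record level $h^i(\pi)$ in Definition~\ref{def beta backbend path} is intrinsic to the path and invariant under translations that fix the $d$-th coordinate. The small preliminary observation $0<p^{\beta}_c(\mathbb H)<1$ (needed so that Proposition~\ref{prop percolation at smaller probability} is applicable at $p=p_c$) is handled by comparison with oriented percolation, as indicated.
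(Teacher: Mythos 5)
Your proposal is correct and follows essentially the same route as the paper: reduce to the $k$-cyclic limit $\beta$ via Theorem~\ref{theorem iff condition half space} (the paper leaves implicit the use of Corollary~\ref{coro same critical percolation at half space} which you state explicitly), verify $0<p^{\beta}_c(\mathbb H)<1$ (the upper bound via oriented percolation as in Remark~\ref{remark critical prob half space is in 0 and 1}), invoke Proposition~\ref{prop percolation at smaller probability} at $p=p^{\beta}_c(\mathbb H)$, extract a single base vertex from the finite box, translate it to the origin, and contradict the definition of $p^{\beta}_c(\mathbb H)$. The only cosmetic difference is the order of the two harmless monotonicity steps (enlarging $\mathbb Q^2_l$ to $\mathbb H$ versus picking the base vertex $x$), which does not affect the argument.
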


	Before we start proving Theorem \ref{theorem no percolation at half space}, we make a remark on the critical probability of any backbend percolation process on the half-space.

	\begin{remark}\label{remark critical prob half space is in 0 and 1}
		For every backbend sequence $\beta$, we have $p^{\beta}_c (\mathbb{H}) < 1$. To see this fix a backbend sequence $\beta$. By the definition of a $\beta$-backbend path, it follows that $p^{\beta}_c (\mathbb{H}) \leq p^{0}_c (\mathbb{H})$. Using a similar argument as in \citet{grimmett2013percolation}, it can be verified that $p^{0}_c (\mathbb{H})$ decreases as $d$ (the number of dimensions) increases. Furthermore, since $2$-dimensional BCC lattice is equivalent to the square lattice, it is known that in two dimensions, $p^{0}_c (\mathbb{H}) < 1$ (see \citet{durrett1984oriented}, \citet{durrett1988lecture}, \citet{balister1993upper}). Combining all these facts, we have $p^{\beta}_c (\mathbb{H}) < 1$.
	\end{remark}

	\begin{proof}[\textbf{Proof of Theorem \ref{theorem no percolation at half space}}]
		Let $\beta$ be the $k$-cyclic backbend sequence such that $\tilde{\beta}$ converges from below to $\beta$. In view of Theorem \ref{theorem iff condition half space}, it is enough to show that $\theta^{\beta}_{\mathbb{H}}(p^{\beta}_c (\mathbb{H})) = 0$.	
		
		Assume for contradiction that $\theta^{\beta}_{\mathbb{H}}(p^{\beta}_c (\mathbb{H})) > 0$. Since $\theta^{\beta}_{\mathbb{H}}(p^{\beta}_c (\mathbb{H})) > 0$, we have $p^{\beta}_c (\mathbb{H}) > 0$. This, together with Remark \ref{remark critical prob half space is in 0 and 1}, implies $0 < p^{\beta}_c (\mathbb{H}) < 1$. Moreover, since $0 < p^{\beta}_c (\mathbb{H}) < 1$ and $\theta^{\beta}_{\mathbb{H}}(p^{\beta}_c (\mathbb{H})) > 0$, by Proposition \ref{prop percolation at smaller probability}, there exist $l, r \in \mathbb{N}$ and $0 < \delta < p^{\beta}_c (\mathbb{H})$ such that
		\begin{equation}\label{equation corollary 1 1}
			\mathbb{P}_{p^{\beta}_c (\mathbb{H}) - \delta} \bigg( \Big| C^{\beta}_{\mathbb{Q}_{l}^2} \Big( B \big([-r, r]^{d-1} \times 0 \big) \Big) \Big| = \infty \bigg) > 0.
		\end{equation}
		Since $\mathbb{Q}_{l}^2 \subseteq \mathbb{H}$, \eqref{equation corollary 1 1} implies
		\begin{equation}\label{equation corollary 1 2}
			\mathbb{P}_{p^{\beta}_c (\mathbb{H}) - \delta} \bigg( \Big| C^{\beta}_{\mathbb{H}} \Big( B \big([-r, r]^{d-1} \times 0 \big) \Big) \Big| = \infty \bigg) > 0.
		\end{equation}
		Because $B \big([-r, r]^{d-1} \times 0 \big)$ is a finite set, \eqref{equation corollary 1 2} implies that there exists some $x \in B \big([-r, r]^{d-1} \times 0 \big)$ such that $\mathbb{P}_{p^{\beta}_c (\mathbb{H}) - \delta} \Big( \big| C^{\beta}_{\mathbb{H}} (\{x\}) \big| = \infty \Big) > 0$. Furthermore, by the construction of the $\beta$-backbend percolation process, we have $\mathbb{P}_{p^{\beta}_c (\mathbb{H}) - \delta} \Big( \big| C^{\beta}_{\mathbb{H}} \big| = \infty \Big) = \mathbb{P}_{p^{\beta}_c (\mathbb{H}) - \delta} \Big( \big| C^{\beta}_{\mathbb{H}} (\{x\}) \big| = \infty \Big)$, and hence 
		\begin{equation}\label{equation corollary 1 3}
			\mathbb{P}_{p^{\beta}_c (\mathbb{H}) - \delta} \Big( \big| C^{\beta}_{\mathbb{H}} \big| = \infty \Big) > 0.
		\end{equation}
		However, since $\delta >0$, \eqref{equation corollary 1 3} contradicts the definition of $p^{\beta}_c (\mathbb{H})$. This completes the proof of Theorem \ref{theorem no percolation at half space}.
	\end{proof}

	\subsection{Results on the full-space}\label{subsection result on full-space}

	In this subsection, we consider the full-space and a particular class of backbend sequences. These backbend sequences have the property that they converge from below to a $k$-cyclic backbend sequence $\beta$ satisfying the property that $\beta_{1} \leq \beta_0 + 1, \ldots, \beta_{k-1} \leq \beta_{k-2} + 1, \mbox{ and } \beta_{0} \leq \beta_{k-1} + 1$ (that is, $\beta_{i+1} \leq \beta_i + 1$ for all $i \in \{0, \ldots, k-1\}$). For such backbend sequences, the first part of Theorem \ref{theorem on full space} says that the critical probability on the full-space will be the same as that on the half-space, and the second part of the theorem says that there will be no percolation on the full-space at the critical probability.

	\begin{theorem}\label{theorem on full space}
		Let $k \in \mathbb{N}$ and suppose that a backbend sequence $\tilde{\beta}$ converges from below to a $k$-cyclic backbend sequence $\beta$ satisfying $\beta_{i+1} \leq \beta_i + 1$ for all $i \in \{0, \ldots, k-1\}$. Then,
		\begin{enumerate}[(i)]
			\item\label{item same critical percolation at half full} $p^{\tilde{\beta}}_c (\mathbb{V}) = p^{\tilde{\beta}}_c (\mathbb{H})$, and
			
			\item\label{item no percolation full space} $\theta^{\tilde{\beta}}_{\mathbb{V}}(p^{\tilde{\beta}}_c (\mathbb{V})) = 0$.
		\end{enumerate}
	\end{theorem}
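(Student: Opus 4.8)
The plan is to reduce both assertions to a single lemma, which I will call the Key Claim: for the $k$-cyclic backbend sequence $\beta$ (which satisfies $\beta_{i+1} \leq \beta_i + 1$ for all $i \in \{0,\dots,k-1\}$, hence, by $k$-cyclicity, $\beta_{m+1} \leq \beta_m + 1$ for all $m \in \mathbb{Z}_+$), we have $\theta^{\beta}_{\mathbb{V}}(p) > 0$ if and only if $\theta^{\beta}_{\mathbb{H}}(p) > 0$ for every $p \in [0,1]$. The reverse implication is immediate from $\mathbb{H} \subseteq \mathbb{V}$; the forward one is the substance of the proof. Granting the Key Claim, both parts follow quickly. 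Since $\tilde{\beta}$ converges from below to $\beta$, we have $\tilde{\beta}_m \leq \beta_m$ for every $m$, so every $\tilde{\beta}$-backbend path is a $\beta$-backbend path, and thus $\theta^{\tilde{\beta}}_{\widehat{\mathbb{V}}}(p) \leq \theta^{\beta}_{\widehat{\mathbb{V}}}(p)$ for $\widehat{\mathbb{V}} \in \{\mathbb{H}, \mathbb{V}\}$ and all $p$. For the first part, the inclusion $\mathbb{H} \subseteq \mathbb{V}$ gives $p^{\tilde{\beta}}_c(\mathbb{V}) \leq p^{\tilde{\beta}}_c(\mathbb{H})$; for the reverse inequality, fix $p < p^{\tilde{\beta}}_c(\mathbb{H}) = p^{\beta}_c(\mathbb{H})$ (the equality by Corollary \ref{coro same critical percolation at half space}), so that $\theta^{\beta}_{\mathbb{H}}(p) = 0$; the Key Claim then gives $\theta^{\beta}_{\mathbb{V}}(p) = 0$, hence $\theta^{\tilde{\beta}}_{\mathbb{V}}(p) = 0$, and letting $p \uparrow p^{\tilde{\beta}}_c(\mathbb{H})$ yields $p^{\tilde{\beta}}_c(\mathbb{V}) \geq p^{\tilde{\beta}}_c(\mathbb{H})$. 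For the second part, the first part and Corollary \ref{coro same critical percolation at half space} give $p^{\tilde{\beta}}_c(\mathbb{V}) = p^{\beta}_c(\mathbb{H})$; since $\beta$ is $k$-cyclic it is in particular $k$-cyclic in the limit from below, so Theorem \ref{theorem no percolation at half space} gives $\theta^{\beta}_{\mathbb{H}}(p^{\beta}_c(\mathbb{H})) = 0$; the Key Claim then yields $\theta^{\beta}_{\mathbb{V}}(p^{\beta}_c(\mathbb{H})) = 0$, whence $\theta^{\tilde{\beta}}_{\mathbb{V}}(p^{\tilde{\beta}}_c(\mathbb{V})) \leq \theta^{\beta}_{\mathbb{V}}(p^{\beta}_c(\mathbb{H})) = 0$.

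To prove the forward implication of the Key Claim, I would start from $\theta^{\beta}_{\mathbb{V}}(p) > 0$ (so $p > 0$) and exploit that $\beta_{m+1} \leq \beta_m + 1$ makes $h \mapsto h - \beta_h$ nondecreasing: a $\beta$-backbend path whose base vertex sits at a high level cannot descend much below that level. Concretely, let $M := \max_{0 \leq i < k} \beta_i$, let $j$ be a positive multiple of $k$ with $j \geq M$, and set $o_j := (j,\dots,j) \in \mathbb{V}$. Every $\beta$-backbend path $\pi$ in $\mathbb{V}$ from $o_j$ has record level $h^i(\pi) \geq j$ at each vertex, so each vertex $y = x^i$ of $\pi$ satisfies $y_d \geq h^i(\pi) - \beta_{h^i(\pi)} \geq j - \beta_j = j - \beta_0 \geq 0$ (using $k \mid j$). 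Hence every such path lies in $\mathbb{H}$, so $C^{\beta}_{\mathbb{V}}(\{o_j\}) = C^{\beta}_{\mathbb{H}}(\{o_j\})$ as random sets. Since $o_j \in \mathbb{V}$ has last coordinate $j \equiv 0 \pmod{k}$, translation invariance of $\mathbb{P}_p$ together with the $k$-cyclicity of $\beta$ (the full-space analogue of \eqref{equation important one halfspace}) gives $\mathbb{P}_p\big(|C^{\beta}_{\mathbb{H}}(\{o_j\})| = \infty\big) = \mathbb{P}_p\big(|C^{\beta}_{\mathbb{V}}(\{o_j\})| = \infty\big) = \theta^{\beta}_{\mathbb{V}}(p) > 0$.

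Finally I would transport this from $o_j$ back to the origin. Let $\rho$ be the path obtained by taking the step $(1,\dots,1)$ exactly $j$ times; it is an oriented (hence $\beta$-backbend) path in $\mathbb{H}$ from the origin to $o_j$, open with probability $p^j > 0$. On $\{\rho \text{ open}\} \cap \{|C^{\beta}_{\mathbb{H}}(\{o_j\})| = \infty\}$, concatenating $\rho$ with an infinite open $\beta$-backbend path in $\mathbb{H}$ from $o_j$ and loop-erasing produces an infinite open $\beta$-backbend path in $\mathbb{H}$ from the origin; loop-erasure keeps the path open and, because $h \mapsto h - \beta_h$ is nondecreasing, also keeps it $\beta$-backbend. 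Both events on the left are increasing, so FKG gives $\theta^{\beta}_{\mathbb{H}}(p) \geq p^j\, \theta^{\beta}_{\mathbb{V}}(p) > 0$, completing the Key Claim. The main obstacle is exactly this Key Claim: an infinite $\beta$-backbend cluster from the origin in $\mathbb{V}$ could a priori stay at bounded height while spreading horizontally, or wander down to level $-\beta_0$, and neither Proposition \ref{prop percolation at smaller probability} nor the half-space results apply to it directly; the device above avoids any case analysis by raising the base point to level $j \geq M$, which makes downward wandering impossible outright, so that $\theta^{\beta}_{\mathbb{V}}(p) > 0$ already exhibits an infinite cluster lying inside $\mathbb{H}$. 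The one routine technicality to keep track of is lattice parity: translations are taken along $(1,\dots,1)$ rather than the $d$-th basis vector so that they preserve $\mathbb{V}$ while shifting the $d$-th coordinate by a multiple of $k$, leaving $\beta$ unchanged.
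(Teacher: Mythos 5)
Your proof is correct and follows essentially the same route as the paper's. Your Key Claim is precisely the paper's Claim~\ref{claim equalilty of percolation probablity cycle}, and your argument for it is the same one: lift the starting point to a height $j$ that is a multiple of $k$ with $j \geq \max_i \beta_i$, observe that from there every $\beta$-backbend path stays in $\mathbb{H}$ (so full-space percolation from that point is identical to half-space percolation from it), bring the cluster back to the origin by an oriented path, make the concatenation self-avoiding, use the consequence of $\beta_{i+1} \leq \beta_i + 1$ that $h \mapsto h - \beta_h$ is nondecreasing to check the result is still $\beta$-backbend, and apply FKG. The only differences are cosmetic: you choose the specific vertex $(j,\dots,j)$ and the diagonal oriented path rather than an arbitrary $y^*$ and a zigzag path, and you organize the deduction around the iff for $\beta$ (then transfer to $\tilde{\beta}$ via the containment $\tilde{\beta}_m \leq \beta_m$ and Corollary~\ref{coro same critical percolation at half space}), whereas the paper proves the iff for $\tilde{\beta}$ directly via the chain $\tilde{\beta}$-full $\Rightarrow \beta$-full $\Rightarrow \beta$-half $\Rightarrow \tilde{\beta}$-half, quoting Claim~\ref{claim equalilty of percolation probablity cycle} and Theorem~\ref{theorem iff condition half space} as the two middle links.
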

	
	\begin{proof}[\textbf{Proof of Theorem \ref{theorem on full space}}]
		We first prove a claim which we will use in the proof of the theorem.

		\begin{claim}\label{claim equalilty of percolation probablity cycle}
			For all $p \in [0,1]$, $\theta^{\beta}_{\mathbb{V}}(p) > 0$ implies $\theta^{\beta}_{\mathbb{H}}(p) > 0$.
		\end{claim}

		\begin{claimproof}[\textbf{Proof of Claim \ref{claim equalilty of percolation probablity cycle}}]
			Suppose $\theta^{\beta}_{\mathbb{V}}(p) > 0$. It must be that $p > 0$. Let $n^* \in k\mathbb{N}$ be such that $n^* \geq \max \{\beta_0, \ldots, \beta_{k-1}\}$.\footnote{For $k \in \mathbb{N}$, we denote by $k\mathbb{N}$ the set $\{kn : n \in \mathbb{N}\}$.} Thus, $n^*$ is such that any $\beta$-backbend path starting from a vertex $y$ with $y_d = n^*$ will always be in the half-space $\mathbb{H}$. Because $\beta$ is $k$-cyclic, by the construction of the $\beta$-backbend percolation process, we have for all $y \in \mathbb{H}$ with $y_d = n^*$,
			\begin{equation}\label{equation important one}
				\mathbb{P}_{p} \Big( \big| C^{\beta}_{\mathbb{V}} (\{y\}) \big| = \infty \Big) = \theta^{\beta}_{\mathbb{V}}(p).
			\end{equation}
			Choose $y^* \in \mathbb{H}$ with $y^*_d = n^*$ such that there exists an oriented path $\pi$ from the origin to $y^*$. As $p > 0$, the probability of $\pi$ being open is positive.
			
			For each infinite $\beta$-backbend path $\hat{\pi}$ in $\mathbb{V}$ from $y^*$, we construct the path $\hat{\pi}^*$ by distinguishing the following cases.
			\begin{enumerate}[(i)]
				\item Suppose $\pi$ and $\hat{\pi}$ have no common vertex other than $y^*$. Then, $\hat{\pi}^*$ is obtained by concatenating the paths $\pi$ and $\hat{\pi}$.
				
				\item Suppose $\pi$ and $\hat{\pi}$ have common vertices other than $y^*$. Let $z^*$ be the first vertex of $\pi$ such that $z^* \in \hat{\pi}$. Then $\hat{\pi}^*$ is obtained by concatenating $\pi_s$ and $\hat{\pi}_s$, where $\pi_s$ is the sub-path of $\pi$ from the origin to $z^*$ and $\hat{\pi}_s$ is the sub-path of $\hat{\pi}$ from $z^*$.
			\end{enumerate}
			To see that $\hat{\pi}^*$ is indeed a path, observe that $\hat{\pi}^*$ is self-avoiding by construction. We claim that for each infinite $\beta$-backbend path $\hat{\pi}$ in $\mathbb{V}$ from $y^*$, the path $\hat{\pi}^*$ constructed as above is an infinite $\beta$-backbend path in $\mathbb{H}$ from the origin. Clearly, $\hat{\pi}^*$ is an infinite path from the origin. Moreover, as we have mentioned earlier, by the choice of $n^*$, the path $\hat{\pi}$ is in $\mathbb{H}$, and hence the path $\hat{\pi}^*$ is in $\mathbb{H}$. We proceed to show that $\hat{\pi}^*$ is a $\beta$-backbend path. 
			
			Assume for contradiction that $\hat{\pi}^*$ is not a $\beta$-backbend path. Since $\hat{\pi}^*$ is not a $\beta$-backbend path, there exists a vertex $x^*$ in $\hat{\pi}^*$ such that $x^*_d < h^* - \beta_{h^*}$, where $h^*$ is the record level attained by the path $\hat{\pi}^*$ till $x^*$. Since $\pi$ is an oriented path, by the construction of $\hat{\pi}^*$, it follows that $x^*$ must be in $\hat{\pi}$. Let $\hat{h}$ be the record level attained by the path $\hat{\pi}$ till $x^*$. Because $\hat{\pi}$ is a $\beta$-backbend path, it must be that $x^*_d \geq \hat{h} - \beta_{\hat{h}}$. This, together with the fact $x^*_d < h^* - \beta_{h^*}$, yields
			\begin{equation}\label{equation concate beta path}
				h^* - \beta_{h^*} > \hat{h} - \beta_{\hat{h}}.
			\end{equation}
			Since $\pi$ is an oriented path to $y^*$ and $\hat{\pi}$ is a path from $y^*$, by the construction of $\hat{\pi}^*$, we have $\hat{h} \geq h^*$. The assumptions on $\beta$ imply that $\beta_l - \beta_m \leq l-m$ for all $l \geq m$. Since $\hat{h} \geq h^*$, this yields $\beta_{\hat{h}} - \beta_{h^*} \leq \hat{h} - h^*$, a contradiction to \eqref{equation concate beta path}. So, it must be that $\hat{\pi}^*$ is a $\beta$-backbend path.
			
			Since for each infinite $\beta$-backbend path $\hat{\pi}$ in $\mathbb{V}$ from $y^*$, $\hat{\pi}^*$ is an infinite $\beta$-backbend path in $\mathbb{H}$ from the origin, by the construction of $\hat{\pi}^*$, we have
			\begin{equation*}
				\theta^{\beta}_{\mathbb{H}}(p) \geq \mathbb{P}_p \Big( \pi \mbox{ is open and } \big| C^{\beta}_{\mathbb{V}} (\{y^*\}) \big| = \infty \Big),
			\end{equation*}
			and hence, by FKG inequality (see \citet{grimmett2013percolation} for details),
			\begin{equation*}
				\theta^{\beta}_{\mathbb{H}}(p) \geq \mathbb{P}_p \big( \pi \mbox{ is open} \big) \times \mathbb{P}_p \Big( \big| C^{\beta}_{\mathbb{V}} (\{y^*\}) \big| = \infty \Big).
			\end{equation*}
			By \eqref{equation important one}, this gives $\theta^{\beta}_{\mathbb{H}}(p) \geq \mathbb{P}_p \big( \pi \mbox{ is open} \big) \times \theta^{\beta}_{\mathbb{V}}(p)$.
			Since $\mathbb{P}_p \big( \pi \mbox{ is open} \big) > 0$ and $\theta^{\beta}_{\mathbb{V}}(p) > 0$, we have $\theta^{\beta}_{\mathbb{H}}(p) > 0$. This completes the proof of Claim \ref{claim equalilty of percolation probablity cycle}.
		\end{claimproof}

		Now, we complete the proof of Theorem \ref{theorem on full space}. In view of Theorem \ref{theorem no percolation at half space}, it is enough to show that for all $p \in [0,1]$, we have $\theta^{\tilde{\beta}}_{\mathbb{V}}(p) > 0$ if and only if $\theta^{\tilde{\beta}}_{\mathbb{H}}(p) > 0$.

		Since $\mathbb{H} \subset \mathbb{V}$, it follows that for all $p \in [0,1]$, $\theta^{\tilde{\beta}}_{\mathbb{H}}(p) > 0$ implies $\theta^{\tilde{\beta}}_{\mathbb{V}}(p) > 0$. We proceed to show that for all $p \in [0,1]$, $\theta^{\tilde{\beta}}_{\mathbb{V}}(p) > 0$ implies $\theta^{\tilde{\beta}}_{\mathbb{H}}(p) > 0$. Suppose $\theta^{\tilde{\beta}}_{\mathbb{V}}(p) > 0$ for some $p \in [0,1]$. By the assumptions on $\tilde{\beta}$ and $\beta$, it follows that every $\tilde{\beta}$-backbend path in $\mathbb{V}$ from the origin is a $\beta$-backbend path in $\mathbb{V}$ from the origin. Therefore, $\theta^{\tilde{\beta}}_{\mathbb{V}}(p) > 0$ implies $\theta^{\beta}_{\mathbb{V}}(p) > 0$. This, together with the assumptions on $\beta$ and Claim \ref{claim equalilty of percolation probablity cycle}, yields $\theta^{\beta}_{\mathbb{H}}(p) > 0$. Moreover, since $\tilde{\beta}$ converges from below to $\beta$ and $\theta^{\beta}_{\mathbb{H}}(p) > 0$, by Theorem \ref{theorem iff condition half space}, we have $\theta^{\tilde{\beta}}_{\mathbb{H}}(p) > 0$. This completes the proof of Theorem \ref{theorem on full space}.
	\end{proof}

	\subsection{Results on half-slabs}\label{subsection result on half-slabs}

	Throughout this subsection, we assume that $d \geq 3$. Our next theorem (Theorem \ref{theorem iff condition half slabs}) establishes a relation between percolation probabilities of a $\tilde{\beta}$-backbend percolation process and a $\beta$-backbend percolation process on every $e$-dimensional half-slab, where $\beta$ is $k$-cyclic and $\tilde{\beta}$ converges from below to $\beta$. The proof of the ``only-if'' part of the theorem is similar to that of Theorem \ref{theorem iff condition half space}, and the proof of the ``if'' part of the theorem follows from that of Theorem \ref{theorem iff condition half space} by additionally imposing the following requirements: (i) $n^* \in 2\mathbb{N}$, (ii) $y^* = (0, \ldots, 0, kn^*)$, and (iii) $\pi$ is in $\mathbb{Q}_l^e$. It can be verified that these additional requirements do not affect the logic of the proof of the ``if'' part of Theorem \ref{theorem iff condition half space}.

	\begin{theorem}\label{theorem iff condition half slabs}
		Suppose a backbend sequence $\tilde{\beta}$ converges from below to a $k$-cyclic backbend sequence $\beta$ for some $k \in \mathbb{N}$. Then, for all $l \in \mathbb{N}$, all $e \in \{2, \ldots, d-1\}$, and all $p \in [0,1]$, we have $\theta^{\tilde{\beta}}_{\mathbb{Q}_l^e}(p) > 0$ if and only if $\theta^{\beta}_{\mathbb{Q}_l^e}(p) > 0$.
	\end{theorem}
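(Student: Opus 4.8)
The plan is to reuse the proof of Theorem~\ref{theorem iff condition half space} almost verbatim, with $\mathbb{H}$ replaced by $\mathbb{Q}_l^e$ and the three bookkeeping adjustments indicated in the paragraph preceding the statement inserted into the ``if'' part. The ``only-if'' direction needs no change at all: the hypothesis $\tilde\beta_{kn+i}\le\beta_i$ for all $n\in\mathbb{Z}_+$ and $i\in\{0,\dots,k-1\}$ makes every $\tilde\beta$-backbend path in $\mathbb{Q}_l^e$ from the origin a $\beta$-backbend path in $\mathbb{Q}_l^e$ from the origin, so $\theta^{\beta}_{\mathbb{Q}_l^e}(p)\ge\theta^{\tilde\beta}_{\mathbb{Q}_l^e}(p)$, and positivity of the right-hand side gives positivity of the left.

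For the ``if'' direction, assume $\theta^{\beta}_{\mathbb{Q}_l^e}(p)>0$, so that $p>0$. I would first choose $n^*$ large enough that $\tilde\beta_{kn+i}=\beta_i$ for all $n\ge n^*$ and all $i$, enlarging it if necessary so that $n^*\in 2\mathbb{N}$; then the last coordinate of $y^*:=(0,\dots,0,kn^*)$ is even, hence $y^*\in\mathbb{V}$. The zigzag oriented path $\pi$ from the origin to $y^*$ (alternating between vertices of the form $(0,\dots,0,\cdot)$ and $(1,\dots,1,\cdot)$, as in the footnote to Theorem~\ref{theorem iff condition half space}) keeps its first $d-e$ coordinates in $\{0,1\}\subseteq[-l,l]$, hence stays in $\mathbb{Q}_l^e$, and $\mathbb{P}_p(\pi\text{ is open})=p^{kn^*}>0$, this event depending only on the $kn^*$ edges of $\pi$. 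Since $\beta$ is $k$-cyclic and $kn^*\in k\mathbb{N}$, translating by $y^*$ carries $\mathbb{Q}_l^e$ onto $\mathbb{Q}_l^e+y^*\subseteq\mathbb{Q}_l^e$, the origin onto $y^*$, and the $\beta$-backbend condition onto itself, so $\mathbb{P}_p\big(|C^{\beta}_{\mathbb{Q}_l^e+y^*}(\{y^*\})|=\infty\big)=\theta^{\beta}_{\mathbb{Q}_l^e}(p)>0$. Because $\tilde\beta_n=\beta_n$ for $n\ge kn^*=y^*_d$ and $\pi$ is oriented up to $y^*$, concatenating $\pi$ with any infinite $\beta$-backbend path in $\mathbb{Q}_l^e+y^*$ from $y^*$ produces an infinite $\tilde\beta$-backbend path in $\mathbb{Q}_l^e$ from the origin; by independence of the two events this yields $\theta^{\tilde\beta}_{\mathbb{Q}_l^e}(p)\ge p^{kn^*}\,\theta^{\beta}_{\mathbb{Q}_l^e}(p)>0$.

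The only point requiring care — the main obstacle, modest as it is — is confinement to the bounded half-slab: in the half-space proof any vertex at level $kn^*$ reachable by an oriented path could serve as $y^*$, whereas here one must exhibit such a vertex together with a connecting oriented path that never leaves $[-l,l]^{d-e}$ in its first $d-e$ coordinates, and one must keep $y^*$ inside the lattice $\mathbb{V}$; the choices $y^*=(0,\dots,0,kn^*)$ and $n^*\in 2\mathbb{N}$ handle both at once, and one should also record the inclusion $\mathbb{Q}_l^e+y^*\subseteq\mathbb{Q}_l^e$ so that the infinite path launched from $y^*$ lands back in $\mathbb{Q}_l^e$. All remaining steps — the translation-invariance identity, the $k$-cyclicity argument, and the independence-based concatenation bound — transfer unchanged from the proof of Theorem~\ref{theorem iff condition half space}.
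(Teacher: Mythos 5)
Your proof is correct and follows exactly the route the paper sketches for this theorem: the only-if direction is the monotonicity argument unchanged, and the if direction is the half-space argument with the three modifications the paper lists (take $n^*\in 2\mathbb{N}$, take $y^*=(0,\dots,0,kn^*)$, and take the zigzag $\pi$ confined to $\mathbb{Q}_l^e$), plus the small but necessary observation that $\mathbb{Q}_l^e+y^*\subseteq\mathbb{Q}_l^e$ so the translated process lives in the half-slab. Nothing further to add.
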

	
	We obtain the following corollary (Corollary \ref{coro same critical percolation at half slabs}) from Theorem \ref{theorem iff condition half slabs}. It says that if a backbend sequence $\tilde{\beta}$ converges from below to a $k$-cyclic backbend sequence $\beta$, then on every $e$-dimensional half-slab, the critical probability of $\tilde{\beta}$-backbend percolation process will be equal to that of $\beta$-backbend percolation process.

	\begin{corollary}\label{coro same critical percolation at half slabs}
		Suppose a backbend sequence $\tilde{\beta}$ converges from below to a $k$-cyclic backbend sequence $\beta$ for some $k \in \mathbb{N}$. Then, for all $l \in \mathbb{N}$ and all $e \in \{2, \ldots, d-1\}$, we have $p^{\tilde{\beta}}_c (\mathbb{Q}_l^e) = p^{\beta}_c (\mathbb{Q}_l^e)$.
	\end{corollary}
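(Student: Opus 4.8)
\textbf{Proof proposal for Corollary \ref{coro same critical percolation at half slabs}.}

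The plan is to reduce the statement directly to Theorem \ref{theorem iff condition half slabs}, exactly as Corollary \ref{coro same critical percolation at half space} is obtained from Theorem \ref{theorem iff condition half space}. First I would fix an arbitrary $l \in \mathbb{N}$ and an arbitrary $e \in \{2, \ldots, d-1\}$; the claimed equality is a statement about a single pair $(l,e)$, so there is no interaction across different half-slabs to worry about. Since $\tilde{\beta}$ converges from below to the $k$-cyclic sequence $\beta$, the hypotheses of Theorem \ref{theorem iff condition half slabs} are met, and that theorem gives, for every $p \in [0,1]$, the biconditional $\theta^{\tilde{\beta}}_{\mathbb{Q}_l^e}(p) > 0 \iff \theta^{\beta}_{\mathbb{Q}_l^e}(p) > 0$.

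Next I would contrapose: the biconditional above is equivalent to $\theta^{\tilde{\beta}}_{\mathbb{Q}_l^e}(p) = 0 \iff \theta^{\beta}_{\mathbb{Q}_l^e}(p) = 0$ for every $p \in [0,1]$. Hence the two sets $\{p \in [0,1] : \theta^{\tilde{\beta}}_{\mathbb{Q}_l^e}(p) = 0\}$ and $\{p \in [0,1] : \theta^{\beta}_{\mathbb{Q}_l^e}(p) = 0\}$ coincide. Taking suprema of two identical subsets of $[0,1]$ yields $p^{\tilde{\beta}}_c(\mathbb{Q}_l^e) = p^{\beta}_c(\mathbb{Q}_l^e)$ by the definition of the critical probability. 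Since $l$ and $e$ were arbitrary, this proves the corollary.

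There is essentially no obstacle here: all the content is in Theorem \ref{theorem iff condition half slabs}, and the corollary is a one-line deduction from the equality of the associated zero-sets of the percolation probability. The only point requiring any care — and it is minor — is to confirm that the "converges from below" hypothesis on $\tilde{\beta}$ is precisely the hypothesis demanded by Theorem \ref{theorem iff condition half slabs}, which it is verbatim, so no further work is needed.
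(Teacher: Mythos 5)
Your proof is correct and matches the paper's intent: the paper treats this corollary as an immediate consequence of Theorem \ref{theorem iff condition half slabs}, and your explicit unpacking (fix $(l,e)$, invoke the biconditional, pass to equality of the zero-sets $\{p : \theta(p)=0\}$, and equate suprema) is precisely the standard one-line deduction the authors have in mind. No gaps.
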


	Our next theorem says that if a backbend sequence is $k$-cyclic in the limit from below, then for all $e \in \{2, \ldots, d -1\}$, the limit of the critical probability on the $e$-dimensional half-slab $\mathbb{Q}_l^e$ as $l$ goes to infinity will be equal to the critical probability on the half-space.

	\begin{theorem}\label{theorem limiting value of half slabs}
		Suppose a backbend sequence $\tilde{\beta}$ is $k$-cyclic in the limit from below for some $k \in \mathbb{N}$. Then, for all $e \in \{2, \ldots, d-1\}$, we have $\underset{l \rightarrow \infty}{\lim} \hspace{1 mm} p^{\tilde{\beta}}_c (\mathbb{Q}_l^e) = p^{\tilde{\beta}}_c (\mathbb{H})$.			
	\end{theorem}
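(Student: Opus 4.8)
The plan is to prove the two inequalities $\lim_{l\to\infty} p^{\tilde\beta}_c(\mathbb{Q}_l^e) \geq p^{\tilde\beta}_c(\mathbb{H})$ and $\lim_{l\to\infty} p^{\tilde\beta}_c(\mathbb{Q}_l^e) \leq p^{\tilde\beta}_c(\mathbb{H})$ separately; the first is immediate from monotonicity, while the second will be extracted from Proposition~\ref{prop percolation at smaller probability} combined with Corollary~\ref{coro same critical percolation at half slabs}. First I would fix $e \in \{2,\ldots,d-1\}$ and note that the half-slabs $(\mathbb{Q}_l^e)_{l\in\mathbb{N}}$ are nested with $\bigcup_{l} \mathbb{Q}_l^e = \mathbb{H}$. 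Since every open $\tilde\beta$-backbend path in $\mathbb{Q}_l^e$ from the origin is also one in $\mathbb{Q}_{l+1}^e$ and in $\mathbb{H}$, the percolation probabilities satisfy $\theta^{\tilde\beta}_{\mathbb{Q}_l^e}(p) \leq \theta^{\tilde\beta}_{\mathbb{Q}_{l+1}^e}(p) \leq \theta^{\tilde\beta}_{\mathbb{H}}(p)$ for every $p$, hence $p^{\tilde\beta}_c(\mathbb{Q}_l^e) \geq p^{\tilde\beta}_c(\mathbb{Q}_{l+1}^e) \geq p^{\tilde\beta}_c(\mathbb{H})$. Thus $L := \lim_{l\to\infty} p^{\tilde\beta}_c(\mathbb{Q}_l^e)$ exists as the limit of a nonincreasing sequence bounded below, and $L \geq p^{\tilde\beta}_c(\mathbb{H})$.

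For the reverse inequality I would argue by contradiction, mimicking the structure of the proof of Theorem~\ref{theorem no percolation at half space}. Suppose $L > p^{\tilde\beta}_c(\mathbb{H})$. Since $p^{\tilde\beta}_c(\mathbb{H}) < 1$ by Remark~\ref{remark critical prob half space is in 0 and 1}, pick $p$ with $p^{\tilde\beta}_c(\mathbb{H}) < p < \min\{L,1\}$, so that $\theta^{\tilde\beta}_{\mathbb{H}}(p) > 0$. Let $\beta$ be the $k$-cyclic backbend sequence to which $\tilde\beta$ converges from below; by Theorem~\ref{theorem iff condition half space} we get $\theta^{\beta}_{\mathbb{H}}(p) > 0$. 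Now $\beta$ is $k$-cyclic and $0 < p < 1$, so Proposition~\ref{prop percolation at smaller probability} provides $l, r \in \mathbb{N}$ and $0 < \delta < p$ with $\mathbb{P}_{p-\delta}\big(|C^{\beta}_{\mathbb{Q}_l^2}(B([-r,r]^{d-1}\times 0))| = \infty\big) > 0$.

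Next I would transport this cluster down into a half-slab of finite width containing the origin. Since $e \geq 2$ one has $\mathbb{Q}_l^2 \subseteq \mathbb{Q}_l^e$, so the inequality above holds with $\mathbb{Q}_l^e$ in place of $\mathbb{Q}_l^2$; as $B([-r,r]^{d-1}\times 0)$ is finite, some vertex $x$ of it, necessarily lying in $\mathbb{Q}_l^e$, satisfies $\mathbb{P}_{p-\delta}(|C^{\beta}_{\mathbb{Q}_l^e}(\{x\})| = \infty) > 0$. Because $x_d = 0$, translating the configuration by $-x$ leaves every vertex's $d$-th coordinate — and hence the $\beta$-backbend condition — unchanged, and $\mathbb{P}_{p-\delta}$ is invariant under translations by vectors of $\mathbb{V}$; moreover $\mathbb{Q}_l^e - x \subseteq \mathbb{Q}_{l+r}^e$ since every coordinate of $x$ has absolute value at most $r$. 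It follows that $\mathbb{P}_{p-\delta}(|C^{\beta}_{\mathbb{Q}_{l+r}^e}| = \infty) \geq \mathbb{P}_{p-\delta}(|C^{\beta}_{\mathbb{Q}_l^e-x}(\{0\})| = \infty) > 0$, i.e. $\theta^{\beta}_{\mathbb{Q}_{l+r}^e}(p-\delta) > 0$, so $p^{\beta}_c(\mathbb{Q}_{l+r}^e) \leq p-\delta$, and Corollary~\ref{coro same critical percolation at half slabs} gives $p^{\tilde\beta}_c(\mathbb{Q}_{l+r}^e) = p^{\beta}_c(\mathbb{Q}_{l+r}^e) \leq p - \delta$. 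But $L \leq p^{\tilde\beta}_c(\mathbb{Q}_{l+r}^e) \leq p - \delta < p < L$, a contradiction. Hence $L \leq p^{\tilde\beta}_c(\mathbb{H})$, and together with the first paragraph, $L = p^{\tilde\beta}_c(\mathbb{H})$.

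The step I expect to be the main obstacle is the transport of percolation into a finite-width slab containing the origin in the third paragraph: Proposition~\ref{prop percolation at smaller probability} only yields an infinite cluster emanating from a box near level $0$ inside the thin slab $\mathbb{Q}_l^2$, and converting this into $\theta^{\beta}_{\mathbb{Q}_{l+r}^e}(p-\delta) > 0$ hinges on the fact that $\beta$-backbend paths are translation invariant only in the horizontal directions — which is why the translating vertex $x$ must satisfy $x_d = 0$, and why the width must be enlarged from $l$ to $l+r$ to absorb the horizontal displacement. Everything else — nestedness, monotonicity of the percolation probability, Theorem~\ref{theorem iff condition half space}, and Corollary~\ref{coro same critical percolation at half slabs} — then fits together routinely.
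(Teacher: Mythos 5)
Your proof is correct and follows essentially the same route as the paper's: monotonicity gives $\lim_l p_c \geq p_c(\mathbb{H})$, and the reverse inequality comes by contradiction from Proposition~\ref{prop percolation at smaller probability}, a translation by a vertex with $x_d = 0$ into a slightly wider half-slab, and Corollaries~\ref{coro same critical percolation at half space}/\ref{coro same critical percolation at half slabs}. The only cosmetic differences are that the paper reduces to the $k$-cyclic $\beta$ once at the outset and then drops the $-\delta$ (using monotonicity in $p$) before translating and enlarging to $\mathbb{Q}^e_{l^*+r^*}$, whereas you keep $p-\delta$ throughout and invoke Theorem~\ref{theorem iff condition half space} and Corollary~\ref{coro same critical percolation at half slabs} inline — the substance is identical.
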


	\begin{proof}[\textbf{Proof of Theorem \ref{theorem limiting value of half slabs}}]
		Fix an arbitrary $e \in \{2, \ldots, d-1\}$. Let $\beta$ be the $k$-cyclic backbend sequence such that $\tilde{\beta}$ converges from below to $\beta$. In view of Corollary \ref{coro same critical percolation at half space} and Corollary \ref{coro same critical percolation at half slabs}, it is enough to show that $\underset{l \rightarrow \infty}{\lim} \hspace{1 mm} p^{\beta}_c (\mathbb{Q}_l^e) = p^{\beta}_c (\mathbb{H})$.
		
		Since $\mathbb{Q}_l^e \subset \mathbb{Q}_{l+1}^e \subset \mathbb{H}$ for all $l \in \mathbb{N}$, we have $\underset{l \rightarrow \infty}{\lim} \hspace{1 mm} p^{\beta}_c (\mathbb{Q}_l^e) \geq p^{\beta}_c (\mathbb{H})$. Assume for contradiction that $\underset{l \rightarrow \infty}{\lim} \hspace{1 mm} p^{\beta}_c (\mathbb{Q}_l^e) > p^{\beta}_c (\mathbb{H})$. Let $p \in [0,1]$ be such that $p^{\beta}_c (\mathbb{H}) < p < \underset{l \rightarrow \infty}{\lim} \hspace{1 mm} p^{\beta}_c (\mathbb{Q}_l^e)$. It must be that $0 < p < 1$ and $\theta^{\beta}_{\mathbb{H}}(p) > 0$. Since $0 < p < 1$ and $\theta^{\beta}_{\mathbb{H}}(p) > 0$, by Proposition \ref{prop percolation at smaller probability}, there exist $l^*, r^* \in \mathbb{N}$ and $0 < \delta < p$ such that
		\begin{equation*}
			\mathbb{P}_{p - \delta} \bigg( \Big| C^{\beta}_{\mathbb{Q}_{l^*}^2} \Big( B \big([-r^*, r^*]^{d-1} \times 0 \big) \Big) \Big| = \infty \bigg) > 0,
		\end{equation*}
		and hence
		\begin{equation}\label{equation limiting 1}
			\mathbb{P}_p \bigg( \Big| C^{\beta}_{\mathbb{Q}_{l^*}^2} \Big( B \big([-r^*, r^*]^{d-1} \times 0 \big) \Big) \Big| = \infty \bigg) > 0.
		\end{equation}
		Because $B \big([-r^*, r^*]^{d-1} \times 0 \big)$ is a finite set, \eqref{equation limiting 1} implies that there exists some $x \in B \big([-r^*, r^*]^{d-1} \times 0 \big)$ such that $\mathbb{P}_p \Big( \big| C^{\beta}_{(x + \mathbb{Q}_{l^*+r^*}^2)} (\{x\}) \big| = \infty \Big) > 0$. Furthermore, by the construction of the $\beta$-backbend percolation process, we have $\mathbb{P}_p \Big( \big| C^{\beta}_{\mathbb{Q}_{l^*+r^*}^2} \big| = \infty \Big) = \mathbb{P}_p \Big( \big| C^{\beta}_{(x + \mathbb{Q}_{l^*+r^*}^2)} (\{x\}) \big| = \infty \Big)$, which implies $\mathbb{P}_p \Big( \big| C^{\beta}_{\mathbb{Q}_{l^*+r^*}^2} \big| = \infty \Big) > 0$. Since $\mathbb{Q}_{l^*+r^*}^2 \subseteq \mathbb{Q}_{l^*+r^*}^e$, this gives
		\begin{equation}\label{equation limiting 2}
			\mathbb{P}_p \Big( \big| C^{\beta}_{\mathbb{Q}_{l^*+r^*}^e} \big| = \infty \Big) > 0.
		\end{equation}
		However, since $p < \underset{l \rightarrow \infty}{\lim} \hspace{1 mm} p^{\beta}_c (\mathbb{Q}_l^e)$ and $p^{\beta}_c (\mathbb{Q}_l^e)$ decreases as $l$ increases, it must be that $p < p^{\beta}_c (\mathbb{Q}_{l^*+r^*}^e)$, a contradiction to \eqref{equation limiting 2}. This completes the proof of Theorem \ref{theorem limiting value of half slabs}.
	\end{proof}

	\begin{note}\label{note counter for not limiting}
		As we have noted (Note \ref{note counter of same critical prob}) for Corollary \ref{coro same critical percolation at half space}, Theorem \ref{theorem limiting value of half slabs} also does \textit{not} hold in general.
		More precisely, as Example \ref{example counter for limiting} shows, if we relax the assumption of Theorem \ref{theorem limiting value of half slabs} by requiring that $\tilde{\beta}$ converges (not necessarily from below) to a $k$-cyclic backbend sequence $\beta$, then Theorem \ref{theorem limiting value of half slabs} does not hold on two-dimensional half-slabs when $\beta$ satisfies the condition introduced in Subsection \ref{subsection result on full-space} (that is, $\beta_{i+1} \leq \beta_i + 1$ for all $i \in \{0, \ldots, k-1\}$).
	\end{note}

	We use the following proposition in Example \ref{example counter for limiting}. It provides an upper bound and a lower bound of the critical probability of a $\tilde{\beta}$-backbend percolation process on a two-dimensional half-slab when $\tilde{\beta}$ converges to a $k$-cyclic backbend sequence $\beta$ satisfying $\beta_{i+1} \leq \beta_i + 1$ for all $i \in \{0, \ldots, k-1\}$.

	\begin{proposition}\label{prop bounds two dimensional half slab}
		Let $k \in \mathbb{N}$ and suppose that a backbend sequence $\tilde{\beta}$ converges to a $k$-cyclic backbend sequence $\beta$ satisfying $\beta_{i+1} \leq \beta_i + 1$ for all $i \in \{0, \ldots, k-1\}$. Then, for all $l \in \mathbb{N}$, we have $p^{\beta}_c (\mathbb{Q}_{2l}^2) \leq p^{\tilde{\beta}}_c (\mathbb{Q}_l^2) \leq p^{\beta}_c (\mathbb{Q}_l^2)$.
	\end{proposition}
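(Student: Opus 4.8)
The asserted inequalities are proved separately; the right-hand one is elementary. For $p^{\tilde\beta}_c(\mathbb{Q}_l^2)\le p^{\beta}_c(\mathbb{Q}_l^2)$ I would pass to the ``lower envelope'' $\hat\beta$ given by $\hat\beta_{kn+i}:=\inf_{m\ge n}\tilde\beta_{km+i}$ for $i\in\{0,\dots,k-1\}$. Since $\tilde\beta\to\beta$ and an integer sequence converges only by being eventually constant, $\hat\beta$ converges from below to the $k$-cyclic sequence $\beta$, while $\hat\beta\le\tilde\beta$ termwise. From $\hat\beta\le\tilde\beta$ every $\hat\beta$-backbend path is a $\tilde\beta$-backbend path, so $\theta^{\hat\beta}_{\mathbb{Q}_l^2}(p)\le\theta^{\tilde\beta}_{\mathbb{Q}_l^2}(p)$ and hence $p^{\tilde\beta}_c(\mathbb{Q}_l^2)\le p^{\hat\beta}_c(\mathbb{Q}_l^2)$, while Corollary \ref{coro same critical percolation at half slabs} applied to $\hat\beta$ gives $p^{\hat\beta}_c(\mathbb{Q}_l^2)=p^{\beta}_c(\mathbb{Q}_l^2)$.

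For the lower bound $p^{\beta}_c(\mathbb{Q}_{2l}^2)\le p^{\tilde\beta}_c(\mathbb{Q}_l^2)$ it suffices, by the definition of the critical probability, to show $\theta^{\tilde\beta}_{\mathbb{Q}_l^2}(p)>0\Rightarrow\theta^{\beta}_{\mathbb{Q}_{2l}^2}(p)>0$ for each $p\in[0,1]$; the cases $p\in\{0,1\}$ are immediate, so fix $0<p<1$. Let $N$ be a multiple of $k$ large enough that $\tilde\beta_m=\beta_m$ for all $m\ge N$. On the positive-probability event $\{|C^{\tilde\beta}_{\mathbb{Q}_l^2}|=\infty\}$, I would first argue that a.s.\ some infinite open $\tilde\beta$-backbend path from the origin has unbounded record level: if its record level were bounded by $R$ the path would lie in $\{x\in\mathbb{Q}_l^2:x_d\le R\}$, a region of finite cross-section (only the $(d-1)$st coordinate is unbounded, using $d\ge3$), where for $p<1$ there is no infinite open cluster. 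Along the portion of such a path with record level $\ge N$ the backbend bound is the one given by $\beta$, so that portion is an infinite open $\beta$-backbend path in $\mathbb{Q}_l^2$ starting at a vertex at level $N$. The level-$N$ vertices of $\mathbb{Q}_l^2$ are countable, so positivity of the probability of this countable union yields a \emph{fixed} vertex $v^\ast=(c_1,\dots,c_{d-2},m,N)\in\mathbb{Q}_l^2$ with $\mathbb{P}_p\bigl(|C^{\beta}_{\mathbb{Q}_l^2}(\{v^\ast\})|=\infty\bigr)>0$.

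Now I would move $v^\ast$ onto the canonical vertex $o':=(0,\dots,0,B')$, where $B'$ is a multiple of $2k$ with $B'\ge\beta_0$. Translating the configuration by $w:=-v^\ast+o'$ works: $w$ has all coordinates of one parity, so $w\in\mathbb V$ and $\mathbb{P}_p$ is preserved; and $w_d=B'-N$ is a multiple of $k$, so (as $\beta$ is $k$-cyclic and the paths in question have record level $\ge N$, hence $\ge B'$ after shifting) $\beta$-backbend paths are carried to $\beta$-backbend paths. The translate of $\mathbb{Q}_l^2$ is $\prod_{i=1}^{d-2}[-l-c_i,\,l-c_i]\times\mathbb{Z}\times[B'-N,\infty)$, whose first $d-2$ coordinate ranges all lie in $[-2l,2l]$ because $|c_i|\le l$ — this is precisely where the width has to double — and, since $\beta_{i+1}\le\beta_i+1$ makes $j\mapsto j-\beta_j$ nondecreasing, any $\beta$-backbend path from $o'$ stays at levels $\ge B'-\beta_{B'}=B'-\beta_0\ge0$, hence in $\mathbb{Q}_{2l}^2$. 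Therefore $\mathbb{P}_p\bigl(|C^{\beta}_{\mathbb{Q}_{2l}^2}(\{o'\})|=\infty\bigr)>0$; this event is increasing, and so is $\{\gamma_0\text{ open}\}$ for $\gamma_0$ an oriented path from the origin to $o'$ in $\mathbb{Q}_{2l}^2$ (probability $p^{B'}>0$, and $\gamma_0$ exists since $B'$ is even), so by FKG their intersection has positive probability. On that intersection I would concatenate $\gamma_0$ with an infinite open $\beta$-backbend path from $o'$, splicing at the first common vertex as in the proof of Claim \ref{claim equalilty of percolation probablity cycle}, and check via the monotonicity of $j\mapsto j-\beta_j$ that the resulting infinite self-avoiding path from the origin is $\beta$-backbend and lies in $\mathbb{Q}_{2l}^2$. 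Thus $\theta^{\beta}_{\mathbb{Q}_{2l}^2}(p)>0$.

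The main obstacle, and the reason the width must genuinely grow, is that the $\tilde\beta$-path cannot be repaired into a $\beta$-path in place: its stretch below level $N$ may be $\beta$-inadmissible and may already have drifted an unbounded distance along the free $(d-1)$st coordinate before level $N$ is first reached, so no bounded detour reconnects the origin to where the $\beta$-admissible tail starts. The plan gets around this by using countable additivity to anchor that tail at one deterministic vertex, translation invariance to bring the vertex over the origin, FKG to join the two pieces, and the extra width to keep the translated slab (the horizontal drift being at most $l$) inside $\mathbb{Q}_{2l}^2$; the hypothesis $\beta_{i+1}\le\beta_i+1$ is used exactly twice, to keep the translated $\beta$-path at nonnegative levels and to certify the spliced path. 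I expect the translation-and-splicing bookkeeping — parities so that $w\in\mathbb V$ and so that $\gamma_0$ exists, plus $B'-N\equiv0\pmod k$ — to be the fiddliest part.
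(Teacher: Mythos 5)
Your proof is correct, and the broad strategy for the harder inequality is the same as the paper's: show any infinite open $\tilde\beta$-path must a.s.\ escape the bounded-level tube, extract a $\beta$-admissible tail anchored at a single deterministic vertex by countable additivity, and then close the gap to the origin. The places where you diverge are worth noting. For the easy inequality, the paper just re-uses the ``if part'' argument of Theorem \ref{theorem iff condition half slabs}; your reduction to Corollary \ref{coro same critical percolation at half slabs} via the lower envelope $\hat\beta=\inf_{m\ge n}\tilde\beta$ is a tidy alternative that avoids re-running that argument. For the lower bound, you splice at the \emph{first} vertex at record level $N$; this makes the tail's record levels agree with those of the whole path, so the tail is $\beta$-backbend \emph{without} invoking $\beta_{i+1}\le\beta_i+1$. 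The paper instead splices at the \emph{last} vertex at level $kn^*$ below $kn^*+t^*$, which keeps the tail above level $kn^*$ (so it lives in $B([-l,l]^{d-2}\times\mathbb{Z}\times[kn^*,\infty])\subseteq x+\mathbb{Q}_{2l}^2$) at the cost of needing monotonicity to show the tail is $\beta$-backbend, and then closes the argument in one line by the translation-invariance identity $\mathbb{P}_p(|C^{\beta}_{\mathbb{Q}_{2l}^2}|=\infty)=\mathbb{P}_p(|C^{\beta}_{x+\mathbb{Q}_{2l}^2}(\{x\})|=\infty)$. That step replaces your entire translate-to-$o'$, FKG, and concatenate-and-splice block: no FKG, no oriented preamble, and only one use of the hypothesis $\beta_{i+1}\le\beta_i+1$. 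Your version also works (I checked the parity bookkeeping, the requirement $B'-N\in k\mathbb{Z}$, and that the translated path stays at levels $\ge B'-\beta_0\ge0$ by the monotonicity of $j\mapsto j-\beta_j$), but it is longer; if you keep it, you should make explicit the verification that the record levels of the tail from the first level-$N$ vertex coincide with those of the original path, which is the point that lets you avoid monotonicity in that step.
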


	The proof of this proposition is relegated to Appendix \ref{appendix proof of prop bounds two dimensional half slab}.
	
	We are now ready to present our counter example.

	\begin{example}\label{example counter for limiting}
		Suppose $d = 3$.
		Consider the backbend sequences $\beta$ and $\tilde{\beta}$ given in Example \ref{example counter for convergent}, where $\beta$ is $1$-cyclic and $\tilde{\beta}$ converges to $\beta$. We have already shown that $p^{\tilde{\beta}}_c (\mathbb{H}) < p^{\beta}_c (\mathbb{H})$ in Example \ref{example counter for convergent}.
		By the assumption on $\beta$ and $\tilde{\beta}$, it follows from Proposition \ref{prop bounds two dimensional half slab} that $\underset{l \rightarrow \infty}{\lim} \hspace{1 mm} p^{\tilde{\beta}}_c (\mathbb{Q}_l^2) = \underset{l \rightarrow \infty}{\lim} \hspace{1 mm} p^{\beta}_c (\mathbb{Q}_l^2)$.
		Furthermore, by the assumption on $\beta$, it follows from Theorem \ref{theorem limiting value of half slabs} that $\underset{l \rightarrow \infty}{\lim} \hspace{1 mm} p^{\beta}_c (\mathbb{Q}_l^2) = p^{\beta}_c (\mathbb{H})$.
		The facts $p^{\tilde{\beta}}_c (\mathbb{H}) < p^{\beta}_c (\mathbb{H})$, $\underset{l \rightarrow \infty}{\lim} \hspace{1 mm} p^{\tilde{\beta}}_c (\mathbb{Q}_l^2) = \underset{l \rightarrow \infty}{\lim} \hspace{1 mm} p^{\beta}_c (\mathbb{Q}_l^2)$, and $\underset{l \rightarrow \infty}{\lim} \hspace{1 mm} p^{\beta}_c (\mathbb{Q}_l^2) = p^{\beta}_c (\mathbb{H})$ together imply $\underset{l \rightarrow \infty}{\lim} \hspace{1 mm} p^{\tilde{\beta}}_c (\mathbb{Q}_l^2) > p^{\tilde{\beta}}_c (\mathbb{H})$.
	\end{example}
	
	In the following, we present a result on the half-space in two dimensions (as we have mentioned in Remark \ref{remark generalization for 2 dimension}). The proof of this result follows by using similar logic as for the proof of Proposition \ref{prop bounds two dimensional half slab}.
	
	\begin{proposition}\label{prop generalize for 2 dimension}
		Suppose $d = 2$. Let $k \in \mathbb{N}$ and suppose that a backbend sequence $\tilde{\beta}$ converges to a $k$-cyclic backbend sequence $\beta$ satisfying $\beta_{i+1} \leq \beta_i + 1$ for all $i \in \{0, \ldots, k-1\}$. Then, $p^{\tilde{\beta}}_c (\mathbb{H}) = p^{\beta}_c (\mathbb{H})$.
	\end{proposition}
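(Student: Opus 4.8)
The plan is to prove the two bounds $p^{\tilde\beta}_c(\mathbb{H}) \le p^{\beta}_c(\mathbb{H})$ and $p^{\tilde\beta}_c(\mathbb{H}) \ge p^{\beta}_c(\mathbb{H})$ separately. The first is exactly Note \ref{note counter of same critical prob}: the ``if'' part of Theorem \ref{theorem iff condition half space} uses only $\tilde\beta \to \beta$, not convergence from below, and gives $\theta^{\beta}_{\mathbb{H}}(p) > 0 \Rightarrow \theta^{\tilde\beta}_{\mathbb{H}}(p) > 0$, hence $p^{\tilde\beta}_c(\mathbb{H}) \le p^{\beta}_c(\mathbb{H})$. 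For the reverse bound it suffices to show that for every $p \in [0,1]$,
\[
\theta^{\tilde\beta}_{\mathbb{H}}(p) > 0 \ \Longrightarrow\ \theta^{\beta}_{\mathbb{H}}(p) > 0,
\]
because this yields $\{p : \theta^{\beta}_{\mathbb{H}}(p) = 0\} \subseteq \{p : \theta^{\tilde\beta}_{\mathbb{H}}(p) = 0\}$ and therefore $p^{\beta}_c(\mathbb{H}) \le p^{\tilde\beta}_c(\mathbb{H})$. So fix $p$ with $\theta^{\tilde\beta}_{\mathbb{H}}(p) > 0$; then $p > 0$, and if $p = 1$ the conclusion is immediate (an oriented path is a $\beta$-backbend path), so assume $0 < p < 1$. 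Since $\beta$ is $k$-cyclic with finite entries and $\tilde\beta \to \beta$, choose once and for all an even $N \in k\mathbb{N}$ with $N \ge \max\{\beta_0, \dots, \beta_{k-1}\}$ and $\tilde\beta_n = \beta_n$ for all $n \ge N$.

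The core of the argument is to show $\mathbb{P}_p\big(|C^{\beta}_{\mathbb{H}}(\{(0,N)\})| = \infty\big) > 0$ (equivalently, by invariance under horizontal translation, with $(0,N)$ replaced by any vertex at level $N$). This rests on two facts. First, a bookkeeping observation: if $\sigma = (x^0, \dots, x^n)$ is a $\tilde\beta$-backbend path from the origin with $x^n_d \ge N$, and $x^m$ is its first vertex at level $N$ (this $m$ exists because the $d$-th coordinate changes by exactly $1$ along each edge, and then $h^m(\sigma)=N$), then $h^i(\sigma) = h^{i-m}\big((x^m, \dots, x^n)\big) \ge N$ for all $i \ge m$; since $\tilde\beta_h = \beta_h$ for $h \ge N$, it follows that the tail $(x^m, \dots, x^n)$ is a $\beta$-backbend path from $x^m$ (still lying in $\mathbb{H}$). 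Second — and this is where $d = 2$ is used crucially — the region $\mathbb{H} \cap \mathbb{S}_{N-1}$ is a bounded-height strip in the square lattice, which has critical probability $1$: closing the finitely many edges that cross a vertical line disconnects it, and infinitely many disjoint such cuts are entirely closed almost surely (Borel--Cantelli), so for $p < 1$ the ordinary open cluster $\mathcal{C}_0$ of the origin inside $\mathbb{H} \cap \mathbb{S}_{N-1}$ is almost surely finite, whence the set $W$ of level-$N$ vertices joined to $\mathcal{C}_0$ by an open edge is almost surely finite.

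Now work on the positive-probability event $A := \{|C^{\tilde\beta}_{\mathbb{H}}| = \infty\}$. Every vertex of $C^{\tilde\beta}_{\mathbb{H}}$ at level $< N$ lies in $\mathcal{C}_0$, so almost surely only finitely many do; hence $C^{\tilde\beta}_{\mathbb{H}}$ has infinitely many vertices at levels $\ge N$. Each such vertex $u$ is reached from the origin by an open $\tilde\beta$-backbend path whose first level-$N$ vertex $v_u$ belongs to $W$ (its prefix before $v_u$ lies in $\mathcal{C}_0$), and by the bookkeeping observation $u \in C^{\beta}_{\mathbb{H}}(\{v_u\})$. As $W$ is almost surely finite, on almost all of $A$ some $C^{\beta}_{\mathbb{H}}(\{v\})$ with $v$ at level $N$ must be infinite; summing over the countably many level-$N$ vertices and using horizontal translation invariance gives $\mathbb{P}_p\big(|C^{\beta}_{\mathbb{H}}(\{(0,N)\})| = \infty\big) > 0$.

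Finally, since $N$ is even there is an oriented (hence $\beta$-backbend) path $\pi$ in $\mathbb{H}$ from the origin to $(0, N)$, open with probability $p^N > 0$. By the FKG inequality, $\mathbb{P}_p\big(\pi \text{ open and } |C^{\beta}_{\mathbb{H}}(\{(0,N)\})| = \infty\big) > 0$; on this event pick an infinite open $\beta$-backbend path $\rho$ in $\mathbb{H}$ from $(0,N)$ (it exists as the lattice is locally finite) and splice $\pi$ with $\rho$ at their first common vertex exactly as in the proof of Claim \ref{claim equalilty of percolation probablity cycle}, where the hypothesis $\beta_{i+1} \le \beta_i + 1$ — equivalently $\beta_l - \beta_m \le l - m$ whenever $l \ge m$ — together with $\pi$ being oriented up to level $N$ guarantees the spliced path is a $\beta$-backbend path. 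That path is an infinite open $\beta$-backbend path from the origin in $\mathbb{H}$, so $\theta^{\beta}_{\mathbb{H}}(p) > 0$, completing the proof. The only genuinely new ingredient relative to the rest of the paper is the second fact above, and I expect the step to get right to be precisely the bookkeeping that makes it bite — that a percolating $\tilde\beta$-cluster cannot stay confined below level $N$ and must therefore climb into the range where $\tilde\beta$ and $\beta$ coincide; everything else recombines the ``if''-part construction of Theorem \ref{theorem iff condition half space} and the splicing of Claim \ref{claim equalilty of percolation probablity cycle}, which is the ``similar logic'' for Proposition \ref{prop bounds two dimensional half slab} alluded to in the text.
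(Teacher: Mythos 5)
Your proof is correct and realizes the strategy the paper gestures at (``similar logic as for the proof of Proposition~\ref{prop bounds two dimensional half slab}''), but the bookkeeping differs from Proposition~\ref{prop bounds two dimensional half slab} in a few respects worth noting. There, the authors extract the tail of the $\tilde\beta$-path from the \emph{last} vertex at a fixed level $kn^*$ below the first excursion to height $kn^*+t^*$; this keeps the tail at level $\geq kn^*$ (needed there, because the translated tail must land inside a half-slab), but the tail's record-level history then differs from that of the full path, so the Lipschitz bound $\beta_l - \beta_m \leq l-m$ is needed to see the tail is a $\beta$-backbend path — after which a direct translation finishes, with no splicing. You instead cut at the \emph{first} vertex at level $N$, for which the tail's record level coincides with that of the original path, so no Lipschitz condition is needed at that step; the price is that your tail may dip below level $N$, so it cannot be translated directly, and you finish with the splice of Claim~\ref{claim equalilty of percolation probablity cycle}, which is where you use $\beta_{i+1}\leq\beta_i+1$. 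You also replace the localization $A_s \uparrow A$ of Proposition~\ref{prop bounds two dimensional half slab} by the a.s.\ finiteness of the ordinary open cluster $\mathcal{C}_0$ in the strip $\mathbb{H}\cap\mathbb{S}_{N-1}$ and of the random set $W$ of entry points at level $N$, then pigeonhole and use translation invariance; this is the same $d=2$ subcriticality ingredient, packaged differently. Both routes are sound; yours is a clean variant that shifts where the Lipschitz hypothesis bites and trades a deterministic finite box for an a.s.\ finite random one.
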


	\appendixtocon
	\appendixtitletocon

	\renewcommand{\theequation}{\Alph{section}.\arabic{equation}}
	\renewcommand{\thetable}{\Alph{section}.\arabic{table}}
	\renewcommand{\thefigure}{\Alph{section}.\arabic{figure}}

	\begin{appendices}

		\section{Proof of Proposition \ref{prop percolation at smaller probability}}\label{appendix proof of prop percolation at smaller probability}

		\setcounter{table}{0}
		\setcounter{figure}{0}
		\setcounter{equation}{0}

		We first introduce some notations and make a remark to facilitate the proof. 
		
		For all $l, t \in \mathbb{Z}_+ \cup \{\infty\}$, all $u \in \{-1, 1\}^{d-1}$, and all $v \in \{-1, 1\}^{d-2}$, let us define
		\begin{equation*}
			\begin{aligned}
				& T(l,t) := \Big\{ x \in B \big([-l, l]^{d-1} \times [0, t] \big) : x_d = t \Big\},\\
				& F(l,t) := \Big\{ x \in B \big([-l, l]^{d-1} \times [0, t] \big) : |x_i| = l \mbox{ for some } i \in \{1, \ldots, d-1\} \Big\}, \\
				& F_{(d-1)^{+}}(l,t) := \Big\{ x \in F(l,t) : x_{d-1} = l \Big\}, \\
				& T^{u}(l,t) := \Big\{ x \in T(l,t) : 0 \leq x_iu_i \leq l \mbox{ for all } i \in \{1, \ldots, d-1\} \Big\}, \mbox{ and}\\
				& F^{v}_{(d-1)^{+}}(l,t) := \Big\{ x \in F_{(d-1)^{+}}(l,t) : 0 \leq x_iv_i \leq l \mbox{ for all } i \in \{1, \ldots, d-2\} \Big\}.
			\end{aligned}
		\end{equation*}

		The following remark follows from basic probability.

		\begin{remark}\label{remark probability inequality}
			Let $\alpha > 0$ and let $A_1, A_2, A_3$ be events such that $A_2$ and $A_3$ are disjoint with $\mathbb{P}_p (A_2) > 0$ and $\mathbb{P}_p (A_3) > 0$. 
			\begin{enumerate}[(a)]
				\item Suppose $\mathbb{P}_p(A_1 \mid A_2) < \alpha$ and $\mathbb{P}_p(A_1 \mid A_3) < \alpha$. Then, $\mathbb{P}_p \big( A_1 \bigm| (A_2 \cup A_3) \big) < \alpha$.
				
				\item Suppose $\mathbb{P}_p(A_1 \mid A_2) > \alpha$ and $\mathbb{P}_p(A_1 \mid A_3) > \alpha$. Then, $\mathbb{P}_p \big( A_1 \bigm| (A_2 \cup A_3) \big) > \alpha$.
				
				\item Suppose $\mathbb{P}_p(A_1 \mid A_2) = \alpha$ and $\mathbb{P}_p(A_1 \mid A_3) = \alpha$. Then, $\mathbb{P}_p \big( A_1 \bigm| (A_2 \cup A_3) \big) = \alpha$.
			\end{enumerate}
		\end{remark}

		\subsection{The proof}

		Let $\eta^* \in (0,1)$ be sufficiently small. Choose $i^* \in \mathbb{N}$ with $i^* > 10$, and $\epsilon^* \in (0,1)$ such that $(1 - \epsilon^*)^{2i^*} > 1 - \eta^*$. We prove Proposition \ref{prop percolation at smaller probability} by distinguishing the following two cases.\medskip
		\\
		\noindent\textbf{\textsc{Case} 1}: Suppose $\theta^{\beta}_{\mathbb{S}_t}(p) = 0$ for all $t \in \mathbb{N}$.			
		
		\noindent Since $\theta^{\beta}_{\mathbb{H}}(p) > 0$, by a standard argument (see \citet{liggett2012interacting}, Theorem 1.10(d) in Chapter VI for details), there exists $r^* \in 2\hat{k}\mathbb{N}$ with $2r^* \geq \max \{\beta_0, \ldots, \beta_{\hat{k}-1}\}$ such that
		\begin{equation}\label{equation from lemma almost 1 prob case 1}
			\mathbb{P}_{p} \Bigg( \Big| C^{\beta}_{\mathbb{H}} \Big( B \big([-r^*, r^*]^{d-1} \times 0 \big) \Big) \Big| = \infty \Bigg) > 1 - \frac{1}{2^{2^{d-1}}} \bigg( \frac{\epsilon^*}{5} \bigg)^{d2^{d-1}}.
		\end{equation}
		For ease of presentation, let us denote $B \big([-r^*, r^*]^{d-1} \times 0 \big)$ by $D^*$. We define the following terms involving $\epsilon^*$ and $r^*$ for our next lemma. 
		\begin{enumerate}[$\bullet$]	
			\item Let $\alpha^* : = \min \left\{
			\begin{aligned}
				& \mathbb{P}_p \Big( B \big([-r^*, r^*]^{d-1} \times [2r^*, 2r^* + 2\hat{k}] \big) \subseteq C^{0}_{B \big([-r^*, r^*]^{d-1} \times [0, 2r^* + 2\hat{k}] \big)} \Big),\\
				& \mathbb{P}_p \Big( B \big([0, 2r^*] \times [-r^*, r^*]^{d-2} \times [2r^*, 2r^* + 2\hat{k}] \big) \subseteq C^{0}_{B \big([0, 2r^*] \times [-r^*, r^*]^{d-2} \times [0, 2r^* + 2\hat{k}] \big)} \Big)
			\end{aligned}
			\right\}$.\footnote{By $C^{0}_{\widehat{\mathbb{V}}}$, we denote $C^{\beta}_{\widehat{\mathbb{V}}}$ where $\beta_i = 0$ for all $i \in \mathbb{Z}_+$.} Clearly, $\alpha^* > 0$.
									
			\item Let $m^* \in \mathbb{N}$ be such that $(1 - \alpha^*)^{m^*} < \frac{\epsilon^*}{5}$. The implication of $m^*$ is that if one performs at least $m^*$ independent trials with the probability of success in each trial being $\alpha^*$, then the probability that there is at least one success  exceeds $1 - \frac{\epsilon^*}{5}$.
			
			\item Let $n^* \in \mathbb{N}$ be such that $n^* \geq m^*(8r^* + 8\hat{k} + 1)^d$. The purpose of $n^*$ is to ensure that in any subset of $\mathbb{V}$ having size $n^*$ or larger, there are at least $m^*$ vertices such the $L^{\infty}$-distance between any two of them is at least $(4r^* + 4\hat{k} + 1)$.\footnote{$L^{\infty}(x,y) := \underset{i}{\max} \hspace{1 mm} \big\{\vert x_i - y_i \vert \big\}$.}
		\end{enumerate}

		\begin{lemma}\label{lemma enough vertices case 1}
			There exist $l^*, t^* \in 2\hat{k}\mathbb{N}$ with $l^* \geq r^*$ such that
			\begin{enumerate}[(i)]
				\item $\mathbb{P}_{p} \bigg( \Big| C^{\beta}_{B \big([-l^*, l^*]^{d-1} \times [0, t^*] \big)}(D^*) \cap T^{u}(l^*, t^*) \Big| \geq n^* \bigg) > 1 - \frac{\epsilon^*}{5}$ for all $u \in \{-1, 1\}^{d-1}$, and
				
				\item $\mathbb{P}_{p} \bigg( \Big| C^{\beta}_{B \big([-l^*, l^*]^{d-1} \times [0, t^*] \big)}(D^*) \cap F^{v}_{(d-1)^{+}}(l^*, t^*) \Big| \geq n^* \bigg) > 1 - \frac{\epsilon^*}{5}$ for all $v \in \{-1, 1\}^{d-2}$.
			\end{enumerate}
		\end{lemma}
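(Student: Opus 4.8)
The plan is to read Lemma~\ref{lemma enough vertices case 1} as the base case of a renormalization: we want a single block $B([-l^*,l^*]^{d-1}\times[0,t^*])$ which, seeded at level $0$ by $D^*$, floods at least $n^*$ vertices of the cluster onto each of the $2^{d-1}$ orthant pieces $T^u(l^*,t^*)$ of its top face and onto each of the $2^{d-2}$ orthant pieces $F^v_{(d-1)^{+}}(l^*,t^*)$ of the side face $\{x_{d-1}=l^*\}$, with probability $>1-\epsilon^*/5$. The constants $\alpha^*,m^*,n^*$ are precisely what is needed to turn ``many vertices somewhere'' into ``a fresh $D^*$-type seed'': from $\geq n^*$ vertices one extracts $\geq m^*$ of them pairwise $L^{\infty}$-apart by at least $4r^*+4\hat{k}+1$, roots pairwise disjoint (hence independent) copies of the oriented box underlying $\alpha^*$ at them, and concludes by the choice of $m^*$ that with probability $>1-\epsilon^*/5$ at least one copy fills its top slab of $2\hat{k}+1$ consecutive levels.

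First I would use the Case~1 hypothesis to drive the cluster upward. Since $\theta^{\beta}_{\mathbb{S}_t}(p)=0$ and $D^*$ is finite, a union bound over $x\in D^*$ together with horizontal translation invariance of $\mathbb{S}_t$ gives $\mathbb{P}_p(|C^{\beta}_{\mathbb{S}_t}(D^*)|=\infty)=0$ for every $t$; hence almost surely on $E:=\{|C^{\beta}_{\mathbb{H}}(D^*)|=\infty\}$ the cluster is not confined to any slab and therefore reaches every level. Consequently $\mathbb{P}_p(\text{the cluster reaches level }j)\downarrow\mathbb{P}_p(E)$, which by \eqref{equation from lemma almost 1 prob case 1} exceeds $1-2^{-2^{d-1}}(\epsilon^*/5)^{d2^{d-1}}$; fix such a level, call it $j_0$. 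A witnessing open $\beta$-backbend path from $D^*$ to level $j_0$, truncated at its first visit to level $j_0$, stays in $\{0\leq x_d\leq j_0\}$, remains a $\beta$-backbend path (prefixes of $\beta$-backbend paths are $\beta$-backbend paths), and visits every intermediate level; its horizontal extent being almost surely finite, I can enlarge the half-width $l$ so as to also confine this path to $B([-l,l]^{d-1}\times[0,j_0])$ at the cost of only a further vanishing loss of probability.

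The hard part is upgrading ``one vertex on a face'' to ``at least $n^*$ vertices on each orthant of each target face'', and this is where the argument does its real work. The mechanism I would try: along the trapped truncated path pick $m^*$ vertices sitting at levels spaced at least $4r^*+4\hat{k}+1$ apart (possible once $j_0$ is large); they are automatically $L^{\infty}$-separated by $4r^*+4\hat{k}+1$, so the ``straight-up'' $\alpha^*$-boxes rooted at them are pairwise disjoint and, after an exploration argument that reveals the path and cluster without examining the edges inside these boxes, independent. By the definition of $m^*$, with probability $>1-\epsilon^*/5$ at least one of these boxes fills its top slab of $2\hat{k}+1$ consecutive levels; because $\beta$ is $\hat{k}$-cyclic and the record level there is already large (using $2r^*\geq\max\{\beta_0,\ldots,\beta_{\hat{k}-1}\}$), this slab is a genuine fresh $D^*$-type seed from which the $\beta$-backbend process restarts. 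Iterating the restart, alternately invoking the ``straight-up'' box to climb and the ``sideways'' box to push the reached region by $r^*$ in a prescribed coordinate direction, with a fresh batch of $\geq m^*$ disjoint boxes at each stage so that per-stage failures are geometrically small, the reached region spreads across the full width $[-l^*,l^*]^{d-1}$ by some level $t^*$ comfortably below $j_0$, and in particular then contains $\geq n^*$ vertices in each $T^u(l^*,t^*)$ and each $F^v_{(d-1)^{+}}(l^*,t^*)$. Setting $l^*:=l$ and taking $l^*,t^*\in 2\hat{k}\mathbb{N}$ with $l^*\geq r^*$ (a harmless enlargement), the various union bounds --- over the $2^{d-1}$ top orthants, the $2^{d-2}$ side orthants, the at most $d$ spreading directions, and the $O(l^*/r^*)$ stages --- are generously covered by the slack $2^{-2^{d-1}}(\epsilon^*/5)^{d2^{d-1}}$ budgeted in \eqref{equation from lemma almost 1 prob case 1}, so each of parts (i) and (ii) holds with probability $>1-\epsilon^*/5$.

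The points I expect to be genuinely delicate are all inside that last paragraph: (a) making the independence bookkeeping rigorous, i.e.\ fixing an order for revealing edges so that each $\alpha^*$-box stays untouched by the cluster exploration that precedes it; (b) verifying that a filled slab of thickness $2\hat{k}$ really does let the $\beta$-backbend dynamics restart with a clean $D^*$-seed --- this is where $\hat{k}$-cyclicity of $\beta$ and $2r^*\geq\max_i\beta_i$ are simultaneously used and where the exact box dimensions in the definition of $\alpha^*$ enter; and (c) controlling the cumulative failure across the horizontal-spreading stages, which is the reason $n^*$ is forced to dominate $m^*(8r^*+8\hat{k}+1)^d$ and $j_0$ must be taken large enough to stack all the intermediate sub-blocks below level $t^*$.
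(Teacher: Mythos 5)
Your proposal takes a genuinely different route from the paper, and it has a gap that I think is fatal for part (ii). The paper does \emph{not} grow the cluster actively across the box. Instead it uses a counting dichotomy. Roughly: by an argument like your first paragraph (Claim~A.1 in the paper), on the survival event the cluster eventually deposits at least $2^{d-1}k^*n^*$ vertices on the top of every sufficiently tall slab. The paper then constructs a sequence of nested boxes $B([-l_k,l_k]^{d-1}\times[0,s_k])$ where $s_k$ is \emph{by definition} the smallest level at which the probability that the top count reaches $2^{d-1}k^*n^*$ drops below $1-(\epsilon^*/10)^{2^{d-1}}$; in other words the box is chosen precisely so that the top exit count is \emph{not} reliably large. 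A second renewal argument (Claim~A.2) shows that the combined top-plus-side count $N_{k_0}$ \emph{is} reliably large. The only way to reconcile these two facts is for the side face $F(l_{k_0},s_{k_0})$ to carry many cluster points, and FKG (applied to the decreasing events ``few on the top'' and ``few on the side'', and then to the symmetry over the $2(d-1)$ side faces and $2^{d-2}$ orthants) symmetrizes this to $\geq n^*$ points on each $F^v_{(d-1)^+}$. Your sketch contains no mechanism for forcing points onto the \emph{side} face at all: pushing the cluster outward with $\alpha^*$-boxes gives you points \emph{beyond} a given face, not \emph{on} it, and you would still need a boundary-crossing count to land exactly on $\{x_{d-1}=l^*\}$. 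The dichotomy that ``if the top saturates, the flow must go to the sides'' is the step you are missing, and it is the whole content of part~(ii).

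There is also a structural misreading. The constants $\alpha^*$ and $m^*$ are declared before Lemma~\ref{lemma enough vertices case 1} for notational convenience but are \emph{not} used in its proof; they enter only in the later Lemmas~A.3--A.6, whose purpose is exactly the iterative block-spreading argument you are invoking here. Reaching for that machinery to prove Lemma~\ref{lemma enough vertices case 1} puts the cart before the horse --- those lemmas take Lemma~\ref{lemma enough vertices case 1} as a hypothesis. What the paper actually uses from the pre-lemma constants is only $n^*$, via the auxiliary $k^*$ defined by $(1-(1-p^{2\hat k})^{k^*})^{n^*}>1-\epsilon^*/10$, which serves Claim~A.3 (a short straight-up extension by $2\hat k$ levels to convert ``$k^*n^*$ points at level $s_{k_0}-2\hat k$ in one orthant'' into ``$n^*$ points at level $s_{k_0}$ in that orthant''). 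Finally, your plan to just ``enlarge $l$ so as to also confine the path'' and then take $l^*,t^*$ as harmless round-ups is not available: the proof needs the specific pair $(l_{k_0},s_{k_0})$ produced by the $s(l,t)$ minimization precisely so that the top count is tight, and enlarging either coordinate destroys that tightness and with it the whole side-face argument.
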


		The proof of this lemma is relegated to Appendix \ref{appendix Proof of lemma enough vertices case 1}.
		
		Suppose $l^*$ and $t^*$ are such that the statement of Lemma \ref{lemma enough vertices case 1} holds. Let $s^* := t^* + 2r^* + 2\hat{k}$. Since $2r^* \geq \max \{\beta_0, \ldots, \beta_{\hat{k}-1}\}$ and $r^*, t^* \in 2\hat{k}\mathbb{N}$, by the construction of $s^*$, we have $s^* \in 2\hat{k}\mathbb{N}$ and $s^* > \max \{\beta_0, \ldots, \beta_{\hat{k}-1}\}$.

		\begin{lemma}\label{lemma coupling case 1}
			There exists $0 < \delta < p$ such that for all $x \in B \big([-l^*,l^*]^{d-2} \times 0 \times 0 \big)$, we have
			\begin{equation*}
				\mathbb{P}_{p-\delta} 
				\left(
				\begin{aligned}
					& \big( D^* + z \big) \subseteq C^{\beta}_{B \big([-2l^*, 2l^*]^{d-2} \times [-l^*, 3l^*] \times [0, z_d] \big)}(D^* + x)\\
					& \mbox{ for some } z \in B \big([-l^*, l^*]^{d-2} \times [l^*, 2l^*] \times [s^*, 2s^*] \big) \mbox{ with } z_d \in 2\hat{k}\mathbb{N} 
				\end{aligned}
				\right) > 1 - \epsilon^*.
			\end{equation*}
		\end{lemma}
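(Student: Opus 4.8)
The plan is a static renormalization (block) argument, and the first thing I would do is dispose of the density shift. For each admissible $z$ the inclusion $(D^*+z)\subseteq C^{\beta}_{B([-2l^*,2l^*]^{d-2}\times[-l^*,3l^*]\times[0,z_d])}(D^*+x)$ is an increasing event (opening edges, or enlarging the confining box, only enlarges the cluster), so the event in the statement --- a finite union over $z$ of such events --- is increasing and, since $z_d\le 2s^*$ for every admissible $z$, depends only on the edges inside the fixed finite box $B([-2l^*,2l^*]^{d-2}\times[-l^*,3l^*]\times[0,2s^*])$. Hence $q\mapsto\mathbb{P}_q(\text{this event})$ is a polynomial in $q$, in particular continuous, and it suffices to show that at density $p$ the event has probability strictly larger than $1-\epsilon^*$, uniformly over the finitely many admissible $x$; continuity then supplies a single $\delta\in(0,p)$ good for all these $x$ at once. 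From here on I work at density $p$ and produce, with probability exceeding $1-\epsilon^*$, an open $\beta$-backbend path from $D^*+x$ to every vertex of one admissible translate $D^*+z$, the whole construction living inside the prescribed box.

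The construction has a spreading stage and an ascending stage. \emph{Spreading.} I apply Lemma~\ref{lemma enough vertices case 1}, translated so that $D^*+x$ plays the role of $D^*$, with the auxiliary box $B([-l^*,l^*]^{d-1}\times[0,t^*])$ shifted in the $(d-1)$-coordinate (and, if needed, in the first $d-2$ coordinates by an amount at most $l^*$) so that it, together with the blocks to be erected on top of it, stays inside $B([-2l^*,2l^*]^{d-2}\times[-l^*,3l^*]\times[0,z_d])$; this uses $r^*\le l^*$, $t^*<s^*\le z_d$, and --- where convenient --- the freedom to replace $l^*$ by something larger. With probability exceeding $1-\tfrac{\epsilon^*}{5}$ the cluster of $D^*+x$ inside this box meets at least $n^*$ vertices of the top face $T^u(l^*,t^*)$ for the octant $u$ with $u_{d-1}=1$; since $n^*\ge m^*(8r^*+8\hat{k}+1)^d$ I may pick among them $m^*$ vertices pairwise at $L^{\infty}$-distance at least $4r^*+4\hat{k}+1$. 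Each such launch vertex $w$ sits at height $w_d=t^*\in 2\hat{k}\mathbb{N}$. \emph{Ascending.} From each of these $m^*$ vertices I attempt, independently, one open oriented block of the second type occurring in the definition of $\alpha^*$, read off in the $(d-1)$-coordinate rather than the first --- legitimate because transposing the first and the $(d-1)$-th coordinate is an automorphism of the BCC lattice fixing the $d$-th coordinate, so this version has the same probability, at least $\alpha^*$. Such a block, starting at $w$, fills (inside a box of height $2r^*+2\hat{k}$) a top slab that contains a full translate $D^*+z$ with $z_d=t^*+2r^*+2\hat{k}=s^*\in[s^*,2s^*]\cap 2\hat{k}\mathbb{N}$ and, thanks to the $r^*$-shift in the $(d-1)$-direction together with the placement of the auxiliary box, with $z_{d-1}\in[l^*,2l^*]$. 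Since these blocks sit strictly above the auxiliary box and the launch vertices agree in the $d$-th coordinate --- so their $L^{\infty}$-gap of at least $4r^*+4\hat{k}+1$ is realised in one of the first $d-1$ coordinates, exceeding $2r^*$ --- the $m^*$ attempts use pairwise disjoint edge-sets and are independent, each succeeding with probability at least $\alpha^*$; by $(1-\alpha^*)^{m^*}<\tfrac{\epsilon^*}{5}$ at least one succeeds with probability exceeding $1-\tfrac{\epsilon^*}{5}$. Finally, the concatenation of the $\beta$-backbend path reaching a successful $w$ with an open oriented path through the block above it is again a $\beta$-backbend path: an oriented path is a $\beta$-backbend path for every $\beta$, and along the block the height never drops below $t^*$, which is the record level already attained at $w$, so the constraint $x^i_d\ge h^i-\beta_{h^i}$ stays satisfied. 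A union bound over the two stages gives probability exceeding $1-\tfrac{2\epsilon^*}{5}>1-\epsilon^*$.

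The probabilistic steps --- the increasing/continuity reduction, independence of disjoint-edge events, the geometric estimate through $m^*$, and the elementary fact that an oriented continuation of a $\beta$-backbend path is still one --- are routine. The step I expect to be the real obstacle is the geometric bookkeeping compressed into ``shifting the auxiliary box so that everything fits''. One must choose, uniformly in $x$, the shift of the box and the octant of its top face so that simultaneously (i) $D^*+x$ lies inside the box, (ii) the box and every attempted block lie inside $B([-2l^*,2l^*]^{d-2}\times[-l^*,3l^*]\times[0,z_d])$, and (iii) the translate $D^*+z$ produced by a successful block falls in the target window $z_i\in[-l^*,l^*]$ for $i\le d-2$, $z_{d-1}\in[l^*,2l^*]$, $z_d\in[s^*,2s^*]$, $z_d\in 2\hat{k}\mathbb{N}$ --- which forces mild extra conditions such as $l^*\ge 2r^*$ and $t^*\in 2\hat{k}\mathbb{N}$ (both harmless) together with a short case analysis according to which coordinates of $x$ are close to $\pm l^*$. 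Matching all of (i)--(iii) against the slack built into the box dimensions and the target ranges is where the work lies; once a consistent placement is fixed, the rest of the argument is immediate.
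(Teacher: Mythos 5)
Your continuity/increasing-event reduction at the end matches the paper's closing step, and the combinatorial machinery (the $n^* \to m^*$ well-separated launch points, the $\alpha^*$ oriented-block probability, the FKG/independence arguments, and the observation that an oriented continuation of a $\beta$-backbend path is still a $\beta$-backbend path) is all correctly identified. The gap is structural, not bookkeeping: a single stage of spreading through the top octant $T^u(l^*,t^*)$ followed by one $\alpha^*$-block cannot place $D^*+z$ with $z_{d-1}\in[l^*,2l^*]$ uniformly. Lemma~\ref{lemma enough vertices case 1}(i) delivers $n^*$ cluster vertices on that octant, but these are scattered over $(d-1)$-coordinates in $[0,l^*]$ (for $x_{d-1}=0$) with no guaranteed concentration near $y_{d-1}=l^*$; an $\alpha^*$-block only shifts by $O(r^*)$ in the $(d-1)$-direction, so a launch point $w$ with $w_{d-1}$ far below $l^*$ lands at $z_{d-1}$ far below $l^*$, and $l^*$ can be arbitrarily larger than $r^*$. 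You flag ``shifting the auxiliary box'' as the obstacle, but shifting the box by more than $O(r^*)$ away from $D^*+x$ destroys the hypothesis of Lemma~\ref{lemma enough vertices case 1}, which spreads from a source centered in the box; no amount of placement bookkeeping makes the one-stage version close.

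What you are missing is the side-face transport stage that the paper inserts before the ascent. In Claim~\ref{claim tau is less than S probability} the paper spreads to the face $F^{v}_{(d-1)^+}(l^*,t^*)$ (part (ii) of Lemma~\ref{lemma enough vertices case 1}), where every vertex has $y_{d-1}=l^*$ exactly; from there an $\alpha^*$-block produces a translate $D^*+\zeta$ with $\zeta_{d-1}=l^*+r^*$ — the $(d-1)$-coordinate is now right — but at an uncontrolled height. A stopping time $\tau$ is introduced precisely to record that height. Then Claim~\ref{claim translation probability} spreads from $D^*+\zeta$ through a top octant whose $(d-1)$-range is $[l^*+r^*,2l^*+r^*]$, i.e.\ within an $r^*$-shift of $[l^*,2l^*]$, and uses two types of blocks (a block shifting back by $r^*$ when $y_{d-1}>2l^*$, a block going straight up otherwise) so that \emph{every} launch point on that octant lands in the target $(d-1)$-window while the height increases to $\tau+s^*\in[s^*,2s^*]$. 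The two stages are each essential: the first fixes the $(d-1)$-coordinate without controlling the height, the second fixes the height while keeping the $(d-1)$-coordinate within range. Your proposal collapses them into one and thereby drops the horizontal transport, so the landing constraint $z_{d-1}\in[l^*,2l^*]$ cannot be met in general.
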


		The proof of this lemma is relegated to Appendix \ref{appendix Proof of lemma coupling case 1}.
		
		Suppose $\delta^*$ is such that the statement of Lemma \ref{lemma coupling case 1} holds.

		\begin{lemma}\label{lemma transition case 1}
			For all $x \in B \big([-l^*,l^*]^{d-2} \times [-2l^*, 2l^*] \times [0, 2s^*] \big)$ with $x_d \in 2\hat{k}\mathbb{Z}_+$, we have
			\begin{equation*}
				\mathbb{P}_{p-\delta^*} 
				\left(
				\begin{aligned}
					& \big( D^* + z \big) \subseteq C^{\beta}_{B \big([-2l^*, 2l^*]^{d-2} \times [-3l^*, 4l^*] \times [x_d, z_d] \big)}(D^* + x)\\
					& \mbox{ for some } z \in B \big([-l^*, l^*]^{d-2} \times [-l^*, 3l^*] \times [2s^*, 4s^*] \big) \mbox{ with } z_d \in 2\hat{k}\mathbb{N} 
				\end{aligned}
				\right) > (1 - \epsilon^*)^2.
			\end{equation*}
		\end{lemma}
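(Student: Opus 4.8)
The plan is to obtain Lemma~\ref{lemma transition case 1} from Lemma~\ref{lemma coupling case 1} by performing one or two of its coupling steps, chained vertically, the choice depending on the height $x_d$ of the starting seed. Two preliminary remarks. First, reflecting the $(d-1)$-th coordinate, $x\mapsto(x_1,\dots,x_{d-2},-x_{d-1},x_d)$, is a symmetry of the BCC lattice that fixes the origin and leaves the $\beta$-backbend structure (which depends only on the $d$-th coordinate) invariant; hence Lemma~\ref{lemma coupling case 1} also holds in its mirror image, giving a ``leftward'' step that grows $D^*+x$ (with $x_{d-1}=x_d=0$) to some $D^*+z$ with $z_{d-1}\in[-2l^*,-l^*]$ and $z_d\in[s^*,2s^*]\cap2\hat{k}\mathbb{N}$, inside the box $B([-2l^*,2l^*]^{d-2}\times[-3l^*,l^*]\times[0,z_d])$. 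Second, since $\beta$ is $\hat{k}$-cyclic and any $a\in2\hat{k}\mathbb{Z}_+$ is a multiple of $\hat{k}$, translating by $(0,\dots,0,b,a)$ — which lies in $\mathbb{V}$, because any vertex $x\in\mathbb{V}$ with $x_d$ even has all coordinates even, so $b=x_{d-1}$ is even — turns either step into one that starts from a seed at level $a$ with $(d-1)$-th coordinate $b$ and reaches a seed at level $a+j$, $j\in[s^*,2s^*]\cap2\hat{k}\mathbb{N}$, inside the correspondingly translated box.

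Now fix $x$ as in the statement. If $x_d\ge s^*$: apply one coupling step from $D^*+x$, rightward if $x_{d-1}\le l^*$ and leftward if $x_{d-1}>l^*$; in both cases the reached seed $D^*+z$ has $z_{d-1}\in[-l^*,3l^*]$ and $z_d\in[x_d+s^*,x_d+2s^*]\subseteq[2s^*,4s^*]$, and the step's box sits inside $B([-2l^*,2l^*]^{d-2}\times[-3l^*,4l^*]\times[x_d,z_d])$, so the probability is $>1-\epsilon^*>(1-\epsilon^*)^2$. If $x_d<s^*$: apply a first step (rightward/leftward by the same rule on $x_{d-1}$), reaching a seed $D^*+y$ with $y_{d-1}\in[-l^*,3l^*]$ and $y_d\in[x_d+s^*,x_d+2s^*]\subseteq[s^*,3s^*)$; if $y_d\ge2s^*$ we are done, otherwise $y_d\in[s^*,2s^*)$ and we apply a second step from $D^*+y$ (rightward if $y_{d-1}\le l^*$, leftward otherwise), reaching $D^*+z$ with $z_{d-1}\in[-l^*,3l^*]$ and $z_d\in[y_d+s^*,y_d+2s^*]\subseteq[2s^*,4s^*)$; the first box has $d$-range $[x_d,y_d]$, the second $[y_d,z_d]$, and the two dichotomies make their union fit inside $B([-2l^*,2l^*]^{d-2}\times[-3l^*,4l^*]\times[x_d,z_d])$. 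Since the first box contains only edges joining consecutive levels in $[x_d,y_d]$ and the second only edges joining consecutive levels in $[y_d,z_d]$, the two steps depend on disjoint edge sets and so are independent; by the uniform-in-starting-seed bound of Lemma~\ref{lemma coupling case 1} (with Remark~\ref{remark probability inequality}(b) applied over the finite partition according to which intermediate seed is produced by the first step), the probability of reaching some admissible $D^*+z$ is at least $(1-\epsilon^*)\cdot\mathbb{P}_{p-\delta^*}(\text{first step succeeds})>(1-\epsilon^*)^2$.

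The point requiring the most care is that chaining two steps must produce an open $\beta$-backbend \emph{path} in the union of the two boxes, not merely an open walk. Given $w\in D^*+z$, take an open $\beta$-backbend path $P_2$ in the second box from some $v\in D^*+y$ to $w$ and an open $\beta$-backbend path $P_1$ in the first box from some $u\in D^*+x$ to that same $v$; the walk $Q=P_1P_2$ is $\beta$-backbend because $P_1$ lies at levels $\le y_d$, so its record at $v$ equals $y_d$, while $P_2$ starts at $v$ at level $y_d$ and stays at levels $\ge y_d$, so past $v$ the record of $Q$ agrees with that of $P_2$ and the backbend inequality is inherited term by term. One then restores self-avoidance by the usual surgery at a common vertex of $P_1$ and $P_2$ (necessarily at level $y_d$); verifying that this surgery does not destroy the backbend property is the one genuinely delicate step, since $h\mapsto h-\beta_h$ need not be monotone for a general $\hat{k}$-cyclic $\beta$, and it must be carried out exactly as in the proof of Lemma~\ref{lemma coupling case 1}. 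I expect this concatenation to be the only real obstacle; the box-nesting inclusions are routine given the dichotomies above, and the level bookkeeping was the whole reason for splitting on whether $x_d\ge s^*$.
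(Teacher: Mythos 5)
Your proposal takes essentially the same route as the paper's proof. The "rightward" and "leftward" coupling steps you derive by translation and reflection are precisely the paper's displays \eqref{equation transition equation for all 1} and \eqref{equation transition equation for all 2}; your two dichotomies (on $x_d\gtrless s^*$ and on the sign of $x_{d-1}-l^*$, then $y_{d-1}-l^*$) collapse the paper's Cases~1--5 into a tidier bookkeeping but cover exactly the same box-nesting computations; and your "partition on the intermediate seed plus Remark~\ref{remark probability inequality}(b)" is the paper's conditional estimate via the disjoint events $A_1,A_2,A_3$ with $A_1\subseteq A_x$ and the final factor-through-$(1-\epsilon^*)$ bound. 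One caution about the step you yourself flag as delicate: the surgery that restores self-avoidance for a concatenation of \emph{two $\beta$-backbend} segments is \emph{not} actually carried out in the proof of Lemma~\ref{lemma coupling case 1} in the form you need --- the concatenations verified there attach an \emph{oriented} tail (events like $A_2$, $\hat B_1$, $\bar B_1$), where the spliced walk is automatically $\beta$-backbend because the later segment's level never decreases and the inequality follows directly from the backbend bound at the splice vertex on the first segment --- so invoking that proof does not by itself close the gap for a $\beta$-backbend-followed-by-$\beta$-backbend splice (where, as you note, $h\mapsto h-\beta_h$ may fail to be monotone). The paper's own write-up is equally terse on exactly this point, so this is not a divergence from the paper so much as a place where both treatments lean on an unwritten argument.
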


		The proof of this lemma is relegated to Appendix \ref{appendix Proof of lemma transition case 1}.
		
		For all $i \in \mathbb{N}$, we denote the set of vertices $B \big([-l^*, l^*]^{d-2} \times [(i-2)l^*, (i+2)l^*] \times [2is^*, 2(i+1)s^*] \big)$ by $B^{+}_i$. For all $i \in \mathbb{N}$ and all $z \in B^{+}_i$, let us define
		\begin{equation*}
			\mathcal{R}^{+}(z) := \left\{ y \in \mathbb{H} : 
			\begin{aligned}
				& \hspace{1 mm} -2l^* \leq y_j \leq 2l^* \mbox{ for all } j \in \{1, \ldots, d-2\},\\
				& \hspace{1 mm} -5l^* + \frac{l^*}{2s^*} y_d \leq y_{d-1} \leq 5l^* + \frac{l^*}{2s^*} y_d, \mbox{ and } 0 \leq y_d \leq z_d
			\end{aligned} 
			\right\}.
		\end{equation*}
		For all $i \in \mathbb{N}$ and all $x \in B \big([-l^*,l^*]^{d-2} \times [-2l^*, 2l^*] \times [0, 2s^*] \big)$ with $x_d \in 2\hat{k}\mathbb{Z}_+$, let us define the following event:
		\begin{equation*}
			G_i^{+}(x) := \left\{ \omega \in \Omega : \big( D^* + z \big) \subseteq C^{\beta}_{\mathcal{R}^{+}(z)}(D^* + x) \mbox{ for some } z \in B^{+}_i \mbox{ with } z_d \in 2\hat{k}\mathbb{N} \right\}.
		\end{equation*}

		\begin{lemma}\label{lemma iteration case 1}
			For all $i \in \mathbb{N}$ and all $x \in B \big([-l^*,l^*]^{d-2} \times [-2l^*, 2l^*] \times [0, 2s^*] \big)$ with $x_d \in 2\hat{k}\mathbb{Z}_+$, we have $\mathbb{P}_{p-\delta^*} \big( G_i^{+}(x) \big) > (1 - \epsilon^*)^{2i}$.
		\end{lemma}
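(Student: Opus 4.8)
The plan is to prove the lemma by induction on $i$, using Lemma \ref{lemma transition case 1} both as the base case and as the ``one more level'' increment in the inductive step, and exploiting that in the BCC lattice every edge joins two consecutive levels. For the base case $i=1$, note that $B^{+}_1 = B\big([-l^*, l^*]^{d-2} \times [-l^*, 3l^*] \times [2s^*, 4s^*]\big)$ is exactly the target box of Lemma \ref{lemma transition case 1}. Moreover, for any $x$ in the box $B\big([-l^*,l^*]^{d-2} \times [-2l^*, 2l^*] \times [0, 2s^*]\big)$ and any $z \in B^{+}_1$, the region $B\big([-2l^*, 2l^*]^{d-2} \times [-3l^*, 4l^*] \times [x_d, z_d]\big)$ appearing in Lemma \ref{lemma transition case 1} is contained in $\mathcal{R}^{+}(z)$: on it $y_d \in [0, 4s^*]$, so $\tfrac{l^*}{2s^*}y_d \in [0, 2l^*]$, and hence the band $-5l^* + \tfrac{l^*}{2s^*}y_d \le y_{d-1} \le 5l^* + \tfrac{l^*}{2s^*}y_d$ contains $[-3l^*, 4l^*]$. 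Thus $G_1^{+}(x)$ contains the event of Lemma \ref{lemma transition case 1}, giving $\mathbb{P}_{p-\delta^*}\big(G_1^{+}(x)\big) > (1-\epsilon^*)^2$.

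For the inductive step, assume the bound for $i$. First I would translate Lemma \ref{lemma transition case 1} by the vector $(0, \dots, 0, il^*, 2is^*)$; this translation preserves $\mathbb{V}$ (since $l^*$ and $2s^*$ are even) and preserves the $\beta$-backbend property (since $\beta$ is $\hat{k}$-cyclic and $2is^*$ is a multiple of $\hat{k}$), and it carries the start box of Lemma \ref{lemma transition case 1} onto $B^{+}_i$ and its target box onto $B^{+}_{i+1}$. So for every $z' \in B^{+}_i$ with $z'_d \in 2\hat{k}\mathbb{N}$, with probability exceeding $(1-\epsilon^*)^2$ there is $z \in B^{+}_{i+1}$ with $z_d \in 2\hat{k}\mathbb{N}$ and $D^*+z \subseteq C^{\beta}_{T(z')}(D^*+z')$, where $T(z') := B\big([-2l^*,2l^*]^{d-2} \times [(i-3)l^*,(i+4)l^*] \times [z'_d, z_d]\big)$; and the same band computation (now $y_d \in [2is^*, 2(i+2)s^*]$, so $\tfrac{l^*}{2s^*}y_d \in [il^*,(i+2)l^*]$) gives $\mathcal{R}^{+}(z') \cup T(z') \subseteq \mathcal{R}^{+}(z)$. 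Concatenating the open $\beta$-backbend paths realizing $G_i^{+}(x)$ (which lie in $\mathcal{R}^{+}(z')$, hence at levels $\le z'_d$) with those realizing this further transition (which lie in $T(z')$, hence at levels $\ge z'_d$) then witnesses $G_{i+1}^{+}(x)$.

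The remaining, and main, obstacle is to upgrade this to $\mathbb{P}_{p-\delta^*}(G_{i+1}^{+}(x)) \ge \mathbb{P}_{p-\delta^*}(G_i^{+}(x))\cdot (1-\epsilon^*)^2$, and here the consecutive-level structure of the BCC lattice is crucial. Since $\mathcal{R}^{+}(z')$ depends on $z'$ only through the height cap $z'_d$, the event $\{D^*+z' \subseteq C^{\beta}_{\mathcal{R}^{+}(z')}(D^*+x)\}$ is measurable with respect to the edges with both endpoints at levels $\le z'_d$, while the further transition from $D^*+z'$ is measurable with respect to the edges with both endpoints at levels $\ge z'_d$, and these two edge families are disjoint. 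I would therefore decompose $G_i^{+}(x)$ over the pair $(h, z^{\star})$, where $h \in 2\hat{k}\mathbb{N}$ is the least level at which some translate $D^*+z$ (with $z \in B^{+}_i$, $z_d = h$) is completed within its region $\mathcal{R}^{+}(z)$, and $z^{\star}$ is, say, the lexicographically least such witness at level $h$; the event $\{h(\omega)=h,\, z^{\star}(\omega)=z'\}$ is measurable with respect to edges at levels $\le h$ (the requirement ``no translate completed at a level below $h$'' being itself an event about edges at levels below $h$), the translated Lemma \ref{lemma transition case 1} gives the transition from $D^*+z'$ with conditional probability $> (1-\epsilon^*)^2$, and summing over $(h,z')$ together with the induction hypothesis yields $\mathbb{P}_{p-\delta^*}(G_{i+1}^{+}(x)) > (1-\epsilon^*)^{2i}\cdot(1-\epsilon^*)^2 = (1-\epsilon^*)^{2(i+1)}$. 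The points needing the most care are this measurability bookkeeping and the routine verification that the concatenated path may be taken self-avoiding.
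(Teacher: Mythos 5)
Your argument is correct and follows the same iterative strategy as the paper: repeatedly applying Lemma~\ref{lemma transition case 1} in translated form, moving one stage at a time from $B^{+}_i$ to $B^{+}_{i+1}$, and exploiting the level-stratification of the BCC edge set to make each transition conditionally independent of the record of how the previous stage was reached. The paper's own proof is essentially a one-line appeal to that same iteration (it introduces the explicit staircase region $\mathcal{S}^{+}(z)\subseteq\mathcal{R}^{+}(z)$ and writes ``using the result in Lemma~\ref{lemma transition case 1} repeatedly''), so you have merely made explicit the induction, the box-containment computations, and the stopping-time measurability bookkeeping that the paper leaves implicit.
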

		
		The proof of this lemma is relegated to Appendix \ref{appendix Proof of lemma iteration case 1}.

		For all $i \in \mathbb{N}$, we denote the set of vertices $B \big([-l^*, l^*]^{d-2} \times [(-i-2)l^*, (-i+2)l^*] \times [2is^*, 2(i+1)s^*] \big)$ by $B^{-}_i$. For all $i \in \mathbb{N}$ and all $z \in B^{-}_i$, let us define
		\begin{equation*}
			\mathcal{R}^{-}(z) := \left\{ y \in \mathbb{H} :
			\begin{aligned}
				& \hspace{1 mm} -2l^* \leq y_j \leq 2l^* \mbox{ for all } j \in \{1, \ldots, d-2\},\\
				& \hspace{1 mm} -5l^* - \frac{l^*}{2s^*} y_d \leq y_{d-1} \leq 5l^* - \frac{l^*}{2s^*} y_d, \mbox{ and } 0 \leq y_d \leq z_d
			\end{aligned} 
			\right\}.
		\end{equation*}
		For all $i \in \mathbb{N}$ and all $x \in B \big([-l^*,l^*]^{d-2} \times [-2l^*, 2l^*] \times [0, 2s^*] \big)$ with $x_d \in 2\hat{k}\mathbb{Z}_+$, let us define the following event:
		\begin{equation*}
			G_i^{-}(x) := \left\{ \omega \in \Omega : \big( D^* + z \big) \subseteq C^{\beta}_{\mathcal{R}^{-}(z)}(D^* + x) \mbox{ for some } z \in B^{-}_i \mbox{ with } z_d \in 2\hat{k}\mathbb{N} \right\}.
		\end{equation*}
	By the assumptions on $i^*$ and $\epsilon^*$, and the construction of the $\beta$-backbend percolation process, Lemma \ref{lemma iteration case 1} implies that for all $x \in B \big([-l^*,l^*]^{d-2} \times [-2l^*, 2l^*] \times [0, 2s^*] \big)$ with $x_d \in 2\hat{k}\mathbb{Z}_+$,
		\begin{equation}\label{equation final case 1}
			\mathbb{P}_{p-\delta^*} (G_{i^*}^{+}(x)) > 1 - \eta^* \mbox{ and } \mathbb{P}_{p-\delta^*} (G_{i^*}^{-}(x)) > 1 - \eta^*.
		\end{equation}

		\begin{lemma}\label{lemma final case 1}
			$\mathbb{P}_{p - \delta^*} \bigg( \Big| C^{\beta}_{\mathbb{Q}_{2l^*}^2} (D^*) \Big| = \infty \bigg) > 0$.
		\end{lemma}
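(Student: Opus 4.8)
The plan is to run a block (renormalization) argument comparing the $\beta$-backbend process on $\mathbb{Q}_{2l^*}^2$ with a supercritical oriented bond percolation, taking the events $G_{i^*}^{+}$ and $G_{i^*}^{-}$ of \eqref{equation final case 1} as the blocks and translated copies of $D^*$ as the renormalized sites. The first ingredient is a translation-invariance observation: since $s^* \in 2\hat{k}\mathbb{N}$ and $\beta$ is $\hat{k}$-cyclic, shifting a $\beta$-backbend path upward by a multiple of $2\hat{k}$ in the $d$-th coordinate again yields a $\beta$-backbend path (the record level and the matching entry of $\beta$ shift consistently), and shifting in the $(d-1)$-th coordinate changes nothing because $\mathbb{Q}_{2l^*}^2$ is unbounded there. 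Hence, for any vertex $x$ of the base box $\hat{B} := B\big([-l^*,l^*]^{d-2} \times [-2l^*,2l^*] \times [0,2s^*]\big)$ with $x_d \in 2\hat{k}\mathbb{Z}_+$ and any shift $w$ that is $0$ in the first $d-2$ coordinates and a nonnegative multiple of $2\hat{k}$ in the $d$-th coordinate, the translated event $G_{i^*}^{\sigma}(x)+w$ has the same $\mathbb{P}_{p-\delta^*}$-probability, exceeding $1-\eta^*$, as $G_{i^*}^{\sigma}(x)$; it depends only on edges lying inside $\mathcal{R}^{\sigma}_{\max}+w$, where $\mathcal{R}^{\sigma}_{\max}$ is the fixed bounded union of the tubes $\mathcal{R}^{\sigma}(z)$ over $z \in B^{\sigma}_{i^*}$; and $\mathcal{R}^{\sigma}_{\max}+w \subseteq \mathbb{Q}_{2l^*}^2$.

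Next I would set up the renormalized lattice. Translating $B^{+}_{i^*}$ (resp.\ $B^{-}_{i^*}$) down by $2i^*s^*$ in the $d$-th coordinate and by $i^*l^*$ (resp.\ $-i^*l^*$) in the $(d-1)$-th coordinate carries it exactly onto $\hat{B}$, and $z_d \in 2\hat{k}\mathbb{N}$ forces the image's $d$-th coordinate into $[0,2s^*] \cap 2\hat{k}\mathbb{Z}_+$; thus the copy $D^*+z$ delivered by an application of $G^{\sigma}_{i^*}$ contains the vertex $z$, which, after the appropriate such shift, is an admissible starting vertex for the next application. Index renormalized sites by $(n,j) \in \mathbb{Z}_+ \times \mathbb{Z}$, with $(0,0)$ carrying $D^*$ itself, and call the oriented edge $(n,j)\to(n+1,j+1)$ open when the corresponding shifted copy of $G_{i^*}^{+}$ holds and the edge $(n,j)\to(n+1,j-1)$ open when the corresponding shifted copy of $G_{i^*}^{-}$ holds. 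By the previous paragraph each such edge is open with probability exceeding $1-\eta^*$, and the family of edge-states is finite-range dependent, with a range determined only by the geometry of the tubes and not by $\eta^*$: two edge-events are independent once their tubes are disjoint, which holds whenever the edges differ by at least two renormalized levels or by a bounded horizontal amount (consecutive tubes overlap only in a slab of $d$-th-coordinate thickness at most $2s^*$, and $i^* > 10$ keeps the horizontal overlap to nearest neighbours). A standard domination-by-product-measures argument then shows that, for $\eta^*$ small enough, this field stochastically dominates a Bernoulli oriented-percolation configuration of density above the critical probability for oriented bond percolation on $\mathbb{Z}_+ \times \mathbb{Z}$; hence with positive $\mathbb{P}_{p-\delta^*}$-probability there is an infinite open oriented path starting at $(0,0)$.

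Finally I would read such an infinite oriented path back as a $\beta$-backbend path. Following it through sites $(0,0),(1,j_1),(2,j_2),\ldots$, each open edge supplies, inside its tube (a subset of $\mathbb{Q}_{2l^*}^2$), open $\beta$-backbend paths from every vertex of the current copy of $D^*$ to every vertex of the next one; one splices these level by level, erasing loops wherever consecutive tubes overlap. At each junction the record level of the spliced path equals exactly the height of the current copy of $D^*$ (each tube caps its height at its top box), so the inequality defining a $\beta$-backbend path is inherited across splicings. This yields an infinite open $\beta$-backbend path in $\mathbb{Q}_{2l^*}^2$ starting from $D^*$, so $\big|C^\beta_{\mathbb{Q}_{2l^*}^2}(D^*)\big| = \infty$ on an event of positive $\mathbb{P}_{p-\delta^*}$-probability, which is the assertion.

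The step I expect to be the main obstacle is the renormalization bookkeeping: pinning down precisely which edge-events of the renormalized lattice are mutually independent, so that the domination-by-product-measures hypothesis genuinely applies with a dependence range that does not grow as $\eta^*\downarrow 0$, and checking that the admissible $2\hat{k}$-multiple shifts returning each step's endpoint box to $\hat{B}$ can be chosen consistently along an infinite path. The loop-erasure in the last step --- keeping the spliced path self-avoiding while retaining the $\beta$-backbend property --- is a secondary but genuine technical point.
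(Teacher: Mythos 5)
Your proposal is correct and follows essentially the same route as the paper: the paper's entire proof of this lemma is a one-sentence deferral to the block-renormalization comparison with supercritical oriented percolation in Lemma 21 of \citet{bezuidenhout1990critical}, using \eqref{equation final case 1} together with the facts that $\beta$ is $\hat{k}$-cyclic, $s^* > \max\{\beta_0,\ldots,\beta_{\hat{k}-1}\}$, $i^* > 10$, and $\eta^*$ is small, and your write-up is precisely that argument made explicit (including the finite-range dependence, domination by product measures, and splicing issues the paper leaves implicit).
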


		Since $\beta$ is $\hat{k}$-cyclic, $s^* > \max \{\beta_0, \ldots, \beta_{\hat{k}-1}\}$, $i^* > 10$, and $\eta^*$ is sufficiently small, the proof of Lemma \ref{lemma final case 1} follows from \eqref{equation final case 1} by using a similar logic as for the proof of Lemma 21 in \citet{bezuidenhout1990critical}.\medskip

		Now, the proof of Proposition \ref{prop percolation at smaller probability} for Case 1 follows from Lemma \ref{lemma final case 1}.\bigskip
		\\
		\noindent\textbf{\textsc{Case} 2}: Suppose $\theta^{\beta}_{\mathbb{S}_t}(p) > 0$ for some $t \in \mathbb{N}$. 
		\\
		Fix $\hat{t} \in \mathbb{N}$ such that $\theta^{\beta}_{\mathbb{S}_{\hat{t}}}(p) > 0$. By a standard argument (see \citet{liggett2012interacting}, Theorem 1.10(d) in Chapter VI for details), this implies that there exists $r^* \in 2\hat{k}\mathbb{N}$ with $2r^* \geq \max \{\hat{t}, \beta_0, \ldots, \beta_{\hat{k}-1}\}$ such that
		\begin{equation}\label{equation from lemma almost 1 prob case 2 partial}
			\mathbb{P}_{p} \Bigg(\Big| C^{\beta}_{\mathbb{S}_{\hat{t}}} \Big( B \big([-r^*, r^*]^{d-1} \times 0 \big) \Big) \Big| = \infty \Bigg) > 1 - \bigg( \frac{\epsilon^*}{2} \bigg)^{(d-1)2^{d-1}}.
		\end{equation}
		For ease of presentation, we denote $B \big([-r^*, r^*]^{d-1} \times 0 \big)$ by $D^*$. Let $t^* := 2r^*$. Since $r^* \in 2\hat{k}\mathbb{N}$ and $2r^* \geq \max \{\hat{t}, \beta_0, \ldots, \beta_{\hat{k}-1}\}$, this implies $t^* \in 2\hat{k}\mathbb{N}$ and $t^* \geq \max \{\hat{t}, \beta_0, \ldots, \beta_{\hat{k}-1}\}$. Furthermore, since $t^* \geq \hat{t}$, $\theta^{\beta}_{\mathbb{S}_{\hat{t}}}(p) > 0$ implies $\theta^{\beta}_{\mathbb{S}_{t^*}}(p) > 0$ and \eqref{equation from lemma almost 1 prob case 2 partial} implies
		\begin{equation}\label{equation from lemma almost 1 prob case 2}
			\mathbb{P}_{p} \Bigg(\Big| C^{\beta}_{\mathbb{S}_{t^*}} ( D^*) \Big| = \infty \Bigg) > 1 - \bigg( \frac{\epsilon^*}{2} \bigg)^{(d-1)2^{d-1}}.
		\end{equation}
		Let us define the following terms involving $\epsilon^*$ and $r^*$ for our next lemma. 
		\begin{enumerate}[$\bullet$]	
			\item Let $\alpha^* := 
			\mathbb{P}_p \Big( B \big([0, 2r^*] \times [-r^*, r^*]^{d-2} \times [2r^* + 2\hat{k}, 2r^* + 4\hat{k}] \big) \subseteq C^{0}_{B \big([0, 2r^*] \times [-r^*, r^*]^{d-2} \times [0, 2r^* + 4\hat{k}] \big)} \Big)$. Clearly, $\alpha^* > 0$.
			
			\item Let $m^* \in \mathbb{N}$ be such that $(1 - \alpha^*)^{m^*} < \frac{\epsilon^*}{2}$. The implication of $m^*$ is that if one performs at least $m^*$ independent trials with probability of success in each trial being $\alpha^*$, the probability that there is at least one success (out of all trials) exceeds $1 - \frac{\epsilon^*}{2}$.				
			
			\item Let $n^* \in \mathbb{N}$ be such that $n^* \geq m^*(8r^* + 16\hat{k} + 1)^d$. The purpose of $n^*$ is to ensure that in any subset of $\mathbb{V}$ having size $n^*$ or larger, there are at least $m^*$ vertices such the $L^{\infty}$-distance between any two of them is at least $(4r^* + 8\hat{k} + 1)$.
		\end{enumerate}

		\begin{lemma}\label{lemma enough vertices case 2}
			There exists $l^* \in 2\hat{k}\mathbb{N}$ with $l^* \geq r^*$ such that for all $v \in \{-1, 1\}^{d-2}$, we have
			\begin{equation*}
				\mathbb{P}_{p} \bigg( \Big| C^{\beta}_{B \big([-l^*, l^*]^{d-1} \times [0, t^*] \big)}(D^*) \cap F^{v}_{(d-1)^{+}}(l^*, t^*) \Big| \geq n^* \bigg) > 1 - \frac{\epsilon^*}{2}.
			\end{equation*}
		\end{lemma}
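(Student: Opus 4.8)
The plan is to first exhibit one vertex of the cluster on the prescribed octant of the side face, then upgrade ``one vertex'' to ``$n^*$ vertices'' by an amplification argument, and transport the conclusion across the sign patterns $v$ by symmetry.

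\smallskip
\noindent\emph{Step 1 (one vertex on the octant).} Write $E_\infty := \{|C^\beta_{\mathbb{S}_{t^*}}(D^*)| = \infty\}$, so that \eqref{equation from lemma almost 1 prob case 2} says $\mathbb{P}_p(E_\infty) > 1 - (\epsilon^*/2)^{(d-1)2^{d-1}}$. On $E_\infty$ the tree of finite open $\beta$-backbend paths in $\mathbb{S}_{t^*}$ issued from $D^*$ is locally finite and, since the cluster is infinite, has vertices of every depth; by König's lemma it has an infinite branch, i.e.\ an infinite open $\beta$-backbend path $\pi$ in $\mathbb{S}_{t^*}$ from $D^*$. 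As $\pi$ lies in the slab $0 \le x_d \le t^*$, its vertices are unbounded in $(x_1,\dots,x_{d-1})$, so for every $l \ge r^*$ the path $\pi$ leaves $B([-l,l]^{d-1}\times[0,t^*])$; the last vertex $w$ of $\pi$ before it does so must lie on a face $\{x_i = \pm l\}$ with $i \le d-1$ (it cannot exit through $x_d<0$ or $x_d>t^*$), and the initial segment of $\pi$ up to $w$ is an open $\beta$-backbend path inside the box, so $w \in C^\beta_{B([-l,l]^{d-1}\times[0,t^*])}(D^*)\cap F(l,t^*)$. Hence, for all $l \ge r^*$,
\begin{equation*}
  E_\infty \;\subseteq\; \Big\{\, C^\beta_{B([-l,l]^{d-1}\times[0,t^*])}(D^*) \cap F(l,t^*) \neq \emptyset \,\Big\}.
\end{equation*}

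\smallskip
\noindent\emph{Step 2 (symmetry and the square-root trick).} The face $F(l,t^*)$ is covered by the $(d-1)2^{d-1}$ ``octants'' obtained from the $2(d-1)$ faces $\{x_i = \pm l\}$, $i\in\{1,\dots,d-1\}$, by further prescribing the signs of the remaining $d-2$ of the coordinates $x_1,\dots,x_{d-1}$; one of these is $F^{v}_{(d-1)^+}(l,t^*)$. Coordinate permutations and sign changes among $x_1,\dots,x_{d-1}$ are automorphisms of $(\mathbb{V},\mathbb{E})$ fixing $D^*$, the box $B([-l,l]^{d-1}\times[0,t^*])$, the slab $\mathbb{S}_{t^*}$, and the $\beta$-backbend relation (which depends only on $x_d$), and they act transitively on these octants; hence each octant is hit by $C^\beta_{B([-l,l]^{d-1}\times[0,t^*])}(D^*)$ with the same probability. ``Not hitting a given octant'' is a decreasing event, so by FKG,
\begin{equation*}
  \mathbb{P}_p\big(\text{miss } F(l,t^*)\big) \;\ge\; \prod_{\text{octants } Q}\mathbb{P}_p\big(\text{miss } Q\big) \;=\; \mathbb{P}_p\big(\text{miss } F^{v}_{(d-1)^+}(l,t^*)\big)^{(d-1)2^{d-1}},
\end{equation*}
and combining with Step~1 gives, for every $l \ge r^*$ and every $v\in\{-1,1\}^{d-2}$,
\begin{equation*}
  \mathbb{P}_p\Big(\, C^\beta_{B([-l,l]^{d-1}\times[0,t^*])}(D^*) \cap F^{v}_{(d-1)^+}(l,t^*) \neq \emptyset \,\Big) \;>\; 1 - \frac{\epsilon^*}{2}.
\end{equation*}

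\smallskip
\noindent\emph{Step 3 (from one vertex to $n^*$ vertices).} This is the step I expect to be the main obstacle; Steps~1 and~2 are routine once set up. By the symmetry of Step~2 it suffices to treat a single $v$, the resulting scale $l^*$ working for all $v$. Starting from the ``one vertex'' statement of Step~2, one grows the cluster into $n^*$ vertices on a single octant by an amplification of the type used for Lemma~21 of \citet{bezuidenhout1990critical}: one enlarges the box in finitely many rounds, at each round extracting $m^*$ of the vertices already on the octant that are pairwise $L^\infty$-separated by at least $4r^*+8\hat k+1$ (possible once the count is $\ge n^* \ge m^*(8r^*+16\hat k+1)^d$), and then asking—off each of these $m^*$ vertices, in pairwise disjoint and hence independent regions—for a positive-probability local spreading event (this is where $\alpha^*$ and the choice $(1-\alpha^*)^{m^*}<\epsilon^*/2$ enter), so that at least one of the $m^*$ attempts produces a fresh octant vertex with probability $>1-\epsilon^*/2$ while the earlier vertices are retained after re-routing their connecting paths outward; the final box side is arranged to lie in $2\hat k\mathbb{N}$ by choosing the enlargements suitably. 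The delicate points—verifying that all concatenations remain $\beta$-backbend paths (for which $\hat k$-cyclicity of $\beta$ together with $t^* \ge \max\{\beta_0,\dots,\beta_{\hat k-1}\}$ leaves enough vertical room inside the slab) and keeping the accumulated error below $\epsilon^*/2$—are exactly those handled in \citet{bezuidenhout1990critical}.
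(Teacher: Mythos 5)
Your Steps 1 and 2 are sound as far as they go, but they only deliver \emph{one} vertex of the cluster on each octant $F^{v}_{(d-1)^{+}}(l,t^*)$, and the passage from one vertex to $n^*$ vertices in Step 3 is a genuine gap. The mechanism you describe there is circular as stated: extracting $m^*$ pairwise $L^\infty$-separated vertices from the octant is ``possible once the count is $\geq n^*$'', which is precisely the conclusion you are trying to reach. Moreover, the $m^*$/$\alpha^*$/separation machinery you invoke is not how the $n^*$ vertices are produced in this paper --- it is how they are \emph{consumed} in the next step (Lemma \ref{lemma coupling case 2}), where one takes the $n^*$ octant vertices guaranteed by the present lemma, extracts $m^*$ well-separated ones, and runs independent local spreading events of probability $\alpha^*$ off each to plant a fresh seed $D^*+z$ beyond the box. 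Iterating local spreading events of probability only $\alpha^*$ cannot by itself push the probability of having $n^*$ octant vertices above $1-\epsilon^*/2$.

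The argument the paper actually uses (by reference to Claim \ref{claim for part ii}, which in turn rests on the stopping-time argument of Claim \ref{claim survives prob 0 case 1}) is of a different nature: for a growing sequence of boxes, whenever the number of cluster vertices on the lateral boundary $F(l_k,t^*)$ falls below the threshold $2(d-1)2^{d-2}n^*$, there is a uniformly positive conditional probability (of order $(1-p)^{c\cdot n^*}$) that all edges leaving those boundary vertices are closed and the cluster terminates; hence, on the event $\big\{ \vert C^{\beta}_{\mathbb{S}_{t^*}}(D^*) \vert = \infty \big\}$, the boundary count almost surely exceeds the threshold for all sufficiently large boxes. Combined with \eqref{equation from lemma almost 1 prob case 2}, this yields some $l^* \in 2\hat{k}\mathbb{N}$ with at least $(d-1)2^{d-1}n^*$ cluster vertices on the \emph{full} face $F(l^*,t^*)$ with probability exceeding $1-(\epsilon^*/2)^{(d-1)2^{d-1}}$, and only \emph{then} is your Step-2 square-root trick applied --- to the decreasing events $\big\{\vert C\cap F^{v}_{(d-1)^{+}}\vert < n^*\big\}$ rather than to mere non-emptiness --- to land $\geq n^*$ vertices on each octant with probability $> 1-\epsilon^*/2$. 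The order matters because the FKG step takes the $(d-1)2^{d-1}$-th root of the failure probability, so the high count must already be in place on the full face before the trick is used; this is exactly why the exponent $(d-1)2^{d-1}$ appears in \eqref{equation from lemma almost 1 prob case 2}.
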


		The proof of this lemma is relegated to Appendix \ref{appendix Proof of lemma enough vertices case 2}.
		
		Suppose $l^*$ is such that the statement of Lemma \ref{lemma enough vertices case 2} holds. Let $s^* := t^* + 2\hat{k}$. Since $t^* \in 2\hat{k}\mathbb{N}$ and $t^* \geq \max \{\beta_0, \ldots, \beta_{\hat{k}-1}\}$, by the construction of $s^*$, we have $s^* \in 2\hat{k}\mathbb{N}$ and $s^* > \max \{\beta_0, \ldots, \beta_{\hat{k}-1}\}$.

		\begin{lemma}\label{lemma coupling case 2}
			There exists $0 < \delta < p$ such that for all $x \in B \big([-l^*,l^*]^{d-2} \times 0 \times 0 \big)$, we have
			\begin{equation*}
				\mathbb{P}_{p-\delta} 
				\left(
				\begin{aligned}
					& \big( D^* + z \big) \subseteq C^{\beta}_{B \big([-2l^*, 2l^*]^{d-2} \times [-l^*, 3l^*] \times [0, z_d] \big)}(D^* + x)\\
					& \mbox{ for some } z \in B \big([-l^*, l^*]^{d-2} \times [l^*, 2l^*] \times [s^*, 2s^*] \big) \mbox{ with } z_d \in 2\hat{k}\mathbb{N} 
				\end{aligned}
				\right) > 1 - \epsilon^*.
			\end{equation*}
		\end{lemma}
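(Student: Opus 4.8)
The plan is to establish the estimate first at the original parameter $p$, with $\epsilon^*/2$ in place of $\epsilon^*$, and then to descend to $p-\delta$ by a routine finite-box continuity argument; the scheme runs parallel to the proof of Lemma \ref{lemma coupling case 1}, now fed with the Case-2 quantities $\alpha^*$, $m^*$, $n^*$ and Lemma \ref{lemma enough vertices case 2}.

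First I would fix $x\in B([-l^*,l^*]^{d-2}\times 0\times 0)$ and the signs $v\in\{-1,1\}^{d-2}$ given by $v_i=-1$ if $x_i>0$ and $v_i=1$ otherwise. Translating Lemma \ref{lemma enough vertices case 2} by $x$ yields the event $A$ that, with probability exceeding $1-\epsilon^*/2$, the $\beta$-backbend cluster $\mathcal{C}$ of $D^*+x$ inside the slab $B([-l^*,l^*]^{d-1}\times[0,t^*])+x$ meets the translated quadrant of the side face $\{y_{d-1}=l^*\}$ in at least $n^*$ vertices, each of height in $[0,t^*]$ and with first $d-2$ coordinates in $[-l^*,l^*]$. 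On $A$, the defining property of $n^*$ lets me pick, as a function of $\mathcal{C}$, $m^*$ of these face vertices $w^{(1)},\dots,w^{(m^*)}$ that are pairwise at $L^{\infty}$-distance at least $4r^*+8\hat k+1$.

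Next I would grow, off each $w^{(j)}$, a fresh oriented cluster depositing a full translate of the seed in the target region. I place around each $w^{(j)}$ a copy $B^{(j)}$ of the box defining $\alpha^*$, rotated so its one-sided coordinate is the $(d-1)$-st and based at $w^{(j)}$, so $B^{(j)}$ occupies $(d-1)$-st coordinates in $[l^*,l^*+2r^*]$, the other $d-2$ horizontal coordinates within $r^*$ of $w^{(j)}$, and heights in $[w^{(j)}_d,\,w^{(j)}_d+2r^*+4\hat k]$. Because no edge of the BCC lattice joins two vertices sharing a $(d-1)$-st coordinate, each $B^{(j)}$ is edge-disjoint from the slab, and the $L^{\infty}$-separation makes the $B^{(j)}$ pairwise edge-disjoint. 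On the $\alpha^*$-event for $B^{(j)}$ the oriented cluster of $w^{(j)}$ in $B^{(j)}$ fills the entire horizontal cross-section at every height in the $(2\hat k+1)$-level window $[w^{(j)}_d+2r^*+2\hat k,\,w^{(j)}_d+2r^*+4\hat k]$; choosing there a level $z_d\in 2\hat k\mathbb{N}$ of the right parity (one exists, the BCC parity/divisibility bookkeeping being absorbed by $r^*,t^*,l^*\in 2\hat k\mathbb{N}$) produces a translate $D^*+z$ with $z\in B([-l^*,l^*]^{d-2}\times[l^*,2l^*]\times[s^*,2s^*])$. Splicing the $\beta$-backbend path in $\mathcal{C}$ that reaches $w^{(j)}$ onto these oriented paths gives $\beta$-backbend paths confined to $B([-2l^*,2l^*]^{d-2}\times[-l^*,3l^*]\times[0,z_d])$ — the splice is automatically $\beta$-admissible because the part past $w^{(j)}$ is oriented and $w^{(j)}_d\geq h-\beta_h$ at $w^{(j)}$ with $\beta\geq 0$. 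Conditioning on $\mathcal{C}$, the edges of $\bigcup_j B^{(j)}$ are i.i.d.\ Bernoulli$(p)$, so on $A$ at least one $\alpha^*$-event occurs with probability at least $1-(1-\alpha^*)^{m^*}>1-\epsilon^*/2$; combining with $A$ gives that the event in the lemma has $\mathbb{P}_p$-probability at least $(1-\epsilon^*/2)^2>1-\epsilon^*$.

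To finish, I would observe that the event in the lemma is determined by the finitely many edges inside $B([-2l^*,2l^*]^{d-2}\times[-l^*,3l^*]\times[0,2s^*])$ (since $z_d\leq 2s^*$), so its probability is a polynomial — in particular monotone and continuous — in the parameter; exceeding $1-\epsilon^*$ at $p$ by the margin $(\epsilon^*)^2/4$, and with $x$ ranging over a finite set, one $\delta\in(0,p)$ works for all admissible $x$. The hard part will be the geometric bookkeeping of the middle step: siting the growth boxes off the side face so that they are at once edge-disjoint from the slab and from each other, land a full copy of $D^*$ at a height in $2\hat k\mathbb{N}$ inside the prescribed target region, and stay inside the ambient box, while checking that the spliced object really is a $\beta$-backbend path. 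The conditioning, the continuity in the parameter, and the uniformity over $x$ are routine.
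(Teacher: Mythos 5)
Your proposal is correct and follows essentially the same route as the paper's proof: fix $x$, pick the quadrant $v$ by the signs of $x$, use Lemma \ref{lemma enough vertices case 2} to get $\geq n^*$ cluster vertices on the $(d-1)^+$ face of $B([-l^*,l^*]^{d-1}\times[0,t^*])+x$, extract $m^*$ well-separated vertices, run independent oriented seeds in edge-disjoint growth boxes aimed in the $(d-1)$-direction with success probability $\alpha^*$ each, conclude $\mathbb{P}_p > (1-\epsilon^*/2)^2 > 1-\epsilon^*$, and descend to $p-\delta$ by finite-box continuity plus finiteness of the set of admissible $x$. The only cosmetic difference is that you condition on the full cluster $\mathcal{C}$ and state the margin $(\epsilon^*)^2/4$ explicitly, where the paper conditions on the event $A_x^1$ of having $m^*$ separated face vertices and leaves the margin implicit; the substance is identical.
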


		The proof of this lemma is relegated to Appendix \ref{appendix Proof of lemma coupling case 2}.
		
		Suppose $\delta^*$ is such that the statement of Lemma \ref{lemma coupling case 2} holds. Then, the proof of Proposition \ref{prop percolation at smaller probability} for Case 2 follows from Lemma \ref{lemma coupling case 2}  by using an argument similar to the one which we have used to complete the proof of Proposition \ref{prop percolation at smaller probability} for Case 1 from Lemma \ref{lemma coupling case 1}.

		\subsubsection{Proof of Lemma \ref{lemma enough vertices case 1}}\label{appendix Proof of lemma enough vertices case 1}

		\begin{claim}\label{claim survives prob 0 case 1}
			Let $N \in \mathbb{N}$. Then,
			\begin{equation*}
				\mathbb{P}_{p} \bigg( \big| C^{\beta}_\mathbb{H} (D^*) \big| = \infty \mbox{ and } \Big| \Big( C^{\beta}_{\mathbb{S}_t} (D^*) \cap T(\infty, t) \Big) \Big| < N \mbox{ for infinitely many } t \in \mathbb{N} \bigg) = 0.
			\end{equation*}
		\end{claim}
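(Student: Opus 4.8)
The plan is to argue by contradiction: assume the event in the claim, call it $\mathcal{E}_N$, has $\mathbb{P}_p(\mathcal{E}_N)>0$, and contradict the Case~1 hypothesis that $\theta^{\beta}_{\mathbb{S}_t}(p)=0$ for every $t$. The first step is to extract from Case~1 the fact that the half-space cluster, if infinite, must ``grow''. Since $\beta$ is $\hat k$-cyclic it is bounded, say by $M:=\max\{\beta_0,\dots,\beta_{\hat k-1}\}$; hence a $\beta$-backbend path from $D^*$ visiting a vertex at level $s$ has record level at most $s+M$ there, so that $C^{\beta}_{\mathbb{H}}(D^*)\cap\{x:x_d\le s\}\subseteq C^{\beta}_{\mathbb{S}_{s+M}}(D^*)$. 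By translation invariance and Case~1 (applied to $\beta$ and, where needed, to its finitely many cyclic shifts $\beta^{(j)}$, $\beta^{(j)}_m:=\beta_{m+j}$, whose slab clusters are also a.s.\ finite), every $\beta$- or $\beta^{(j)}$-backbend cluster from a single vertex inside a slab is a.s.\ finite; so $C^{\beta}_{\mathbb{H}}(D^*)\cap\{x:x_d\le s\}$ is a.s.\ finite for every $s$. Therefore, on $\{|C^{\beta}_{\mathbb{H}}(D^*)|=\infty\}$ the cluster a.s.\ reaches arbitrarily high levels, and truncating any witnessing open $\beta$-backbend path at its first visit to level $t$ (that prefix lies in $\mathbb{S}_t$ and ends in $W_t:=C^{\beta}_{\mathbb{S}_t}(D^*)\cap T(\infty,t)$) shows $W_t\neq\emptyset$ for every $t$, a.s. The same truncation shows $W_t$ is a \emph{cut}: any open $\beta$-backbend path from $D^*$ with record $\ge t$ meets $W_t$ at its first visit to level $t$, and its portion beyond that visit is, after re-rooting, an open $\beta^{(t)}$-backbend path with initial record $t$ in $\mathbb{H}$. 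Hence the set of vertices of $C^{\beta}_{\mathbb{H}}(D^*)$ reachable using record $\ge t$ equals $\bigcup_{w\in W_t}\widetilde C^{\,w}_t$, where $\widetilde C^{\,w}_t$ is that re-rooted cluster from $w$, and $C^{\beta}_{\mathbb{H}}(D^*)\subseteq C^{\beta}_{\mathbb{S}_{t-1}}(D^*)\cup\bigcup_{w\in W_t}\widetilde C^{\,w}_t$.

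Combining these facts, on $\{|C^{\beta}_{\mathbb{H}}(D^*)|=\infty\}$ the slab part $C^{\beta}_{\mathbb{S}_{t-1}}(D^*)$ is a.s.\ finite, so $\bigcup_{w\in W_t}\widetilde C^{\,w}_t$ is a.s.\ infinite; whenever additionally $|W_t|<N$, at least one of these at most $N$ sub-clusters is infinite. The quantitative input is: because $p<1$, for each phase $j$ the full-space survival probability satisfies $\theta^{\beta^{(j)}}_{\mathbb{V}}(p)\le 1-(1-p)^{\deg}<1$; since $\beta$ is bounded, a re-rooted cluster $\widetilde C^{\,w}_t$ is infinite iff it reaches arbitrarily high levels, and above a bounded collar of $O(M)$ layers its behaviour is a function of genuinely fresh edges which is (up to re-rooting) a $\beta^{(\cdot)}$-backbend full-space cluster; this yields a constant $q<1$ with $\mathbb{P}_p(\widetilde C^{\,w}_t\text{ infinite}\mid \text{edges below any finitely many lower levels})\le q$. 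By the FKG inequality, conditionally on the configuration below level $t$, the probability that \emph{some} of at most $N$ such sub-clusters is infinite is at most $1-(1-q)^{N}<1$. One then runs a renewal argument along thin levels: reveal the configuration level by level, let $\tau_0<\tau_1<\dots$ be the successive levels with $|W_t|<N$ that are spaced more than $L+M$ apart for a fixed large $L$; on $\mathcal{E}_N$ all $\tau_i$ are finite and the cluster survives past each, so conditionally on $\mathcal{F}_{\tau_i}$ (edges up to level $\tau_i$) the chance of surviving past $\tau_i+L$ is at most $1-(1-q)^{N}$; iterating gives $\mathbb{P}_p(\mathcal{E}_N)\le\big(1-(1-q)^{N}\big)^{k}$ for every $k$, hence $\mathbb{P}_p(\mathcal{E}_N)=0$.

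The main obstacle is the ``$M$-layer memory'' of $\beta$-backbend paths: the cut $W_t$ is $\mathcal{F}_t$-measurable, but the re-rooted sub-clusters $\widetilde C^{\,w}_t$ can descend to level $t-M$, so consecutive renewal steps are not literally independent. This is handled by inserting a bounded collar of roughly $M+1$ layers above each thin level so that the part of the cluster responsible for ``surviving past $\tau_i+L$'' depends only on fresh edges and has survival probability at most $q$ \emph{uniformly} in the already-revealed configuration; the price is that the exploration must carry a slightly enlarged state (the cluster together with the edge-states in its top $O(M)$ layers). Verifying that the cyclic shifts $\beta^{(j)}$ inherit finite slab clusters and strictly sub-critical full-space behaviour from Case~1, and that the FKG conditioning is legitimate, are the only other points needing care; everything else is routine.
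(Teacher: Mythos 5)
Your overall strategy matches the paper's: use Case~1 to rule out the event that the cut $W_t := C^{\beta}_{\mathbb{S}_t}(D^*) \cap T(\infty,t)$ ever vanishes while the cluster is infinite, and then iterate a conditional ``death'' bound along the infinitely many levels where $|W_t|<N$. The gap is in your quantitative step. You posit a uniform $q<1$ such that, conditionally on the configuration revealed up to level $t$, each re-rooted sub-cluster $\widetilde C^{\,w}_t$ ($w\in W_t$) survives with probability at most $q$, and then combine via FKG. But $\widetilde C^{\,w}_t$ lives partly in the already-revealed collar $[t-M,t]$, and those edges can connect $w$ to an arbitrarily large set $V^{w}$ of level-$t$ vertices; since survival requires escaping to level $t+1$, and escape happens iff some level-$(t\to t+1)$ edge out of $V^{w}$ is open, the conditional escape probability is $1-(1-p)^{2^{d-1}|V^w|}$, which is \emph{not} bounded away from $1$ unless $|V^w|$ is bounded. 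One cannot in general conclude $V^{w}\subseteq W_t$ (hence $|V^w|<N$): re-routing the concatenation $D^*\to w\to v$ into a self-avoiding $\beta$-backbend path preserves the backbend property only under $\beta_{i+1}\le\beta_i+1$, a hypothesis that Proposition~\ref{prop percolation at smaller probability} does not assume. The ``collar / enlarged-state'' device you flag as the main obstacle is not carried out and, as sketched, does not close this gap.

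The paper's estimate is simpler and avoids the memory issue precisely because it never measures survival of re-rooted clusters. It observes that $C^{\beta}_{\mathbb{S}_{t+1}}(D^*)\cap T(\infty,t+1)$ is nonempty only if some $x\in W_t$ has an open edge to level $t+1$; hence, conditionally on $\mathcal{F}_t$ and on $W_t=S$ with $|S|<N$, the probability the next-level cut is empty is at least $\prod_{x\in S}\mathbb{P}_p(A_x)>(1-p)^{2^{d-1}N}$, where $A_x$ closes every edge from $x$ to $T(\infty,t+1)$. These $A_x$ involve only edges crossing from level $t$ to $t+1$ and are therefore independent of the slab configuration, so the bound is uniform over the conditioning \emph{by construction} --- no FKG, no re-rooting, no collar. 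Iterating along the stopping times $\tau_m$ gives $\mathbb{P}_p(B_2)=0$, and the empty-cut case $B_3$ is dispatched directly from $\theta^{\beta}_{\mathbb{S}_t}(p)=0$. Replacing your ``sub-cluster survival'' mechanism with this one-step edge-closing estimate is the clean way to make your renewal argument rigorous.
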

		
		\begin{claimproof}[\textbf{Proof of Claim \ref{claim survives prob 0 case 1}.}]
			Define the following events:
			\begin{equation*}
				\begin{aligned}
					& B_1 := \bigg\{ \omega \in \Omega : \big| C^{\beta}_\mathbb{H} (D^*) \big| = \infty \mbox{ and } \Big| \Big( C^{\beta}_{\mathbb{S}_t} (D^*) \cap T(\infty, t) \Big) \Big| < N \mbox{ for infinitely many } t \in \mathbb{N} \bigg\},\\
					& B_2 := \left\{ \omega \in \Omega : 
					\begin{aligned}
						& \hspace{1 mm} \Big| \Big(C^{\beta}_{\mathbb{S}_{t'}} (D^*) \cap T(\infty, t') \Big) \Big| > 0 \hspace{1 mm} \forall \hspace{1 mm} t' \in \mathbb{N} \mbox{ and}\\
						& \hspace{1 mm} \Big| \Big( C^{\beta}_{\mathbb{S}_t} (D^*) \cap T(\infty, t) \Big) \Big| < N \mbox{ for infinitely many } t \in \mathbb{N}
					\end{aligned}
					\right\}, \mbox{ and}\\
					& B_3 := \bigg\{ \omega \in \Omega : \big| C^{\beta}_\mathbb{H} (D^*) \big| = \infty \mbox{ and } \Big| \Big(C^{\beta}_{\mathbb{S}_{t'}} (D^*) \cap T(\infty, t') \Big) \Big| = 0 \mbox{ for some } t' \in \mathbb{N} \bigg\}.
				\end{aligned}
			\end{equation*}
			By the constructions of $B_1, B_2$, and $B_3$, along with the construction of the $\beta$-backbend percolation process, we have $B_1 = B_2 \cup B_3$ and $B_2 \cap B_3 = \emptyset$. 
			
			First, we show that $\mathbb{P}_{p} \big( B_3 \big) = 0$. For all $t \in \mathbb{N}$, define 
			\begin{equation*}
				B_3^{t} := \bigg\{ \omega \in \Omega : \big| C^{\beta}_\mathbb{H} (D^*) \big| = \infty \mbox{ and } \Big| \Big(C^{\beta}_{\mathbb{S}_t} (D^*) \cap T(\infty, t) \Big) \Big| = 0 \bigg\}.
			\end{equation*}
			By the assumption for Case 1, we have $\theta^{\beta}_{\mathbb{S}_{t}}(p) = 0$ for all $t \in \mathbb{N}$. Since $D^*$ is a finite set of vertices, by the construction of the $\beta$-backbend percolation process, this implies $\mathbb{P}_{p} \big( B_3^{t} \big) = 0$ for all $t \in \mathbb{N}$. By construction, we have $B_3 = \underset{t \in \mathbb{N}}{\cup} \hspace{1 mm} B_3^{t}$.
			Since $\mathbb{P}_{p} \big( B_3^{t} \big) = 0$ for all $t \in \mathbb{N}$, this implies $\mathbb{P}_{p} \big( B_3 \big) = 0$.
			
			Next, we show that $\mathbb{P}_{p} \big( B_2 \big) = 0$. For all $t \in \mathbb{Z}_+$, let $\mathcal{F}_t$ be the $\sigma$-algebra generated by $\Big\{\big| \big(C^{\beta}_{\mathbb{S}_{t'}} (D^*) \cap T(\infty, t') \big) \big| : 0 \leq t' \leq t \Big\}$.\footnote{Note that $\big| \big( C^{\beta}_{\mathbb{S}_t} (D^*) \cap T(\infty, t) \big) \big|$ is a random variable for all $t \in \mathbb{Z}_+$.} Fix $t \in \mathbb{N}$ and $F \in \mathcal{F}_t$. For a vertex $x \in T(\infty, t)$, define the following event:
			\begin{equation*}
				A_x := \bigg\{ \omega \in \Omega : \langle x, y \rangle \mbox{ is closed whenever } y \in T(\infty, t + 1) \bigg\}.
			\end{equation*}
			Clearly, $\mathbb{P}_{p} (A_x) = (1 - p)^{2^{d-1}}$ for all $x \in T(\infty, t)$. Fix $S \subseteq T(\infty, t)$ such that $|S| < N$. Then,
			\begin{equation*}
				\begin{aligned}
					& \hspace{1 mm} \mathbb{P}_{p} \Bigg( \Big| \Big( C^{\beta}_{\mathbb{S}_{t+1}} (D^*) \cap T(\infty, t + 1) \Big) \Big| = 0 \hspace{2 mm} \biggm| \hspace{2 mm} F \mbox{ and } \Big( C^{\beta}_{\mathbb{S}_t} (D^*) \cap T(\infty, t) \Big) = S \Bigg)
					\\
					= \hspace{1 mm} & \hspace{1 mm} \mathbb{P}_{p} \bigg( \underset{x \in S}{\cap} \hspace{1 mm} A_x \bigg) \hspace{10 mm} \mbox{(by the construction of the $\beta$-backbend percolation process)}\\
					= \hspace{1 mm} & \hspace{1 mm} \underset{x \in S}{\prod} \hspace{1 mm} \mathbb{P}_{p} \big( A_x \big) \hspace{12 mm} \mbox{(since each edge is independent)}\\
					> \hspace{1 mm} & \hspace{1 mm} (1-p)^{2^{d-1}N}.
				\end{aligned}
			\end{equation*}
			This, along with Remark \ref{remark probability inequality}, yields
			\begin{equation}\label{equation atleast one open path}
					\mathbb{P}_{p} \Bigg( \Big| \Big(C^{\beta}_{\mathbb{S}_{t+1}} (D^*) \cap T(\infty, t + 1) \Big) \Big| > 0 \hspace{2 mm} \biggm| \hspace{2 mm} F \mbox{ and } \Big| \Big( C^{\beta}_{\mathbb{S}_t} (D^*) \cap T(\infty, t) \Big) \Big| < N \Bigg) < 1 - (1-p)^{2^{d-1}N}.
			\end{equation}
			Define the sequence of stopping times $(\tau_m)_{m \in \mathbb{N}}$ as follows. For all $\omega \in \Omega$,
			\begin{enumerate}[(i)]
				\item $\tau_1(\omega) := \min \hspace{1 mm} \bigg\{t \in \mathbb{N} : \Big| \Big( C^{\beta}_{\mathbb{S}_t} (D^*) \cap T(\infty, t) \Big) \Big| < N \bigg\}$ and
				
				\item for all $m \in \mathbb{N} \setminus \{1\}$, $\tau_m(\omega) := \min \hspace{1 mm} \bigg\{t > \tau_{m-1}(\omega) : \Big| \Big( C^{\beta}_{\mathbb{S}_t} (D^*) \cap T(\infty, t) \Big) \Big| < N \bigg\}$,
			\end{enumerate}
			where $\min \hspace{1 mm} \emptyset = \infty$. By the definition of $\{\tau_m\}_{m \in \mathbb{N}}$, along with \eqref{equation atleast one open path} and Remark \ref{remark probability inequality}, for all $m \in \mathbb{N}$ and all $F \in \mathcal{F}_{\tau_m}$, we have
			\begin{equation}\label{equation strong markov}
				\mathbb{P}_{p} \bigg( \Big| \Big(C^{\beta}_{\mathbb{S}_{\tau_m + 1}} (D^*) \cap T(\infty, \tau_m + 1) \Big) \Big| > 0 \hspace{2 mm} \Bigm| \hspace{2 mm} F \mbox{ and } \tau_m < \infty \bigg) < 1 - (1-p)^{2^{d-1}N}.
			\end{equation}
			Now,
			\begin{equation}\label{equation prob of infinite t is 0 calculation}
				\begin{aligned}
					\mathbb{P}_{p} (B_2) \hspace{1 mm} = \hspace{1 mm} & \hspace{1 mm} \mathbb{P}_{p} \bigg(\Big| \Big(C^{\beta}_{\mathbb{S}_{t'}} (D^*) \cap T(\infty, t') \Big) \Big| > 0 \hspace{1 mm} \forall \hspace{1 mm} t' \in \mathbb{N} \mbox{ and } \tau_m < \infty \hspace{1 mm} \forall \hspace{1 mm} m \in \mathbb{N} \bigg)
					\\
					= \hspace{1 mm} & \hspace{1 mm} \mathbb{P}_{p} \big(\tau_1 < \infty \big) \times \mathbb{P}_{p} \bigg( \Big| \Big(C^{\beta}_{\mathbb{S}_{\tau_1 + 1}} (D^*) \cap T(\infty, \tau_1 + 1) \Big) \Big| > 0 \hspace{2 mm} \biggm| \hspace{2 mm} \tau_1 < \infty \bigg)
					\\
					& \times \mathbb{P}_{p} \Bigg( \tau_2 < \infty \hspace{2 mm} \biggm| \hspace{2 mm} \tau_1 < \infty, \Big| \Big(C^{\beta}_{\mathbb{S}_{\tau_1 + 1}} (D^*) \cap T(\infty, \tau_1 + 1) \Big) \Big| > 0 \Bigg)
					\\
					& \times \mathbb{P}_{p} 
					\left(
					\Big| \Big(C^{\beta}_{\mathbb{S}_{\tau_2 + 1}} (D^*) \cap T(\infty, \tau_2 + 1) \Big) \Big| > 0 \hspace{3.5 mm} \vrule \hspace{3.5 mm} 
					\begin{aligned}
						& \tau_1 < \infty, \Big| \Big(C^{\beta}_{\mathbb{S}_{\tau_1 + 1}} (D^*) \cap T(\infty, \tau_1 + 1) \Big) \Big| > 0,\\
						& \tau_2 < \infty\\
					\end{aligned}
					\right)
					\\
					& \times \cdots.
				\end{aligned}
			\end{equation}
			Since $p < 1$, we have $1 - (1-p)^{2^{d-1}N} < 1$. This, together with \eqref{equation strong markov} and \eqref{equation prob of infinite t is 0 calculation}, implies $\mathbb{P}_{p} (B_2) = 0$.
						
			Since $B_1 = B_2 \cup B_3$ and $B_2 \cap B_3 = \emptyset$, the facts $\mathbb{P}_{p} (B_2) = 0$ and $\mathbb{P}_{p} (B_3) = 0$ together imply $\mathbb{P}_{p} (B) = 0$. This completes the proof of Claim \ref{claim survives prob 0 case 1}.
		\end{claimproof}

		Let $k^* \in \mathbb{N}$ be such that $\Big(1 - (1 - p^{2\hat{k}})^{k^*} \Big)^{n^*} > 1 - \frac{\epsilon^*}{10}$.\footnote{Since $p$, $n^*$, and $\epsilon^*$ are already fixed, the facts $p > 0$ and $1 - \Big( 1 - \frac{\epsilon^*}{10}\Big)^{\frac{1}{n^*}} > 0$ together imply $k^*$ is well-defined.} Define $A := \Big\{ \omega \in \Omega : \big| C^{\beta}_\mathbb{H} (D^*) \big| = \infty \Big\}$ and $A_k := \bigg\{ \omega \in \Omega : \Big| \Big( C^{\beta}_{\mathbb{S}_t} (D^*) \cap T(\infty, t) \Big) \Big| \geq 2^{d-1}k^*n^* \hspace{1 mm} \forall \hspace{1 mm} t \geq k \bigg\}$ for all $k \in \mathbb{N}$. By construction, we have
		\begin{equation*}
			A_k \underset{k \rightarrow \infty}{\uparrow} \Bigg(A \setminus \bigg\{ \omega : \big| C^{\beta}_\mathbb{H} (D^*) \big| = \infty \mbox{ and } \Big| \Big( C^{\beta}_{\mathbb{S}_t} (D^*) \cap T(\infty, t) \Big) \Big| < 2^{d-1}k^*n^* \mbox{ for infinitely many } t \in \mathbb{N} \bigg\} \Bigg),
		\end{equation*}
		which, together with Claim \ref{claim survives prob 0 case 1}, yields $\mathbb{P}_{p} (A_k) \underset{k \rightarrow \infty}{\uparrow} \mathbb{P}_{p} (A)$. This, along with \eqref{equation from lemma almost 1 prob case 1}, implies that there exists $t_1 \in 2\hat{k}\mathbb{N}$ such that $\mathbb{P}_{p} \big(A_{t_1} \big) > 1 - \frac{1}{2^{2^{d-1}}} \Big( \frac{\epsilon^*}{5} \Big)^{d2^{d-1}}$. This, together with the construction of $A_{t_1}$, yields
		\begin{equation}\label{equation new claim 1}
			\mathbb{P}_{p} \bigg( \Big| \Big( C^{\beta}_{\mathbb{S}_{t_1}} (D^*) \cap T(\infty, t_1) \Big) \Big| \geq 2^{d-1}k^*n^* \bigg) > 1 - \frac{1}{2^{2^{d-1}}} \bigg( \frac{\epsilon^*}{5} \bigg)^{d2^{d-1}}.
		\end{equation}
		Furthermore, by the assumption for Case 1, we have $\theta^{\beta}_{\mathbb{S}_{t_1}}(p) = 0$. Since $D^*$ is a finite set of vertices, this implies 
		\begin{equation*}
			\mathbb{P}_{p} \bigg( \Big| \Big( C^{\beta}_{B \big([-l, l]^{d-1} \times [0, t_1] \big)} (D^*) \cap T(l, t_1) \Big) \Big| \geq 2^{d-1}k^*n^* \bigg) \underset{l \rightarrow \infty}{\uparrow} \mathbb{P}_{p} \bigg( \Big| \Big( C^{\beta}_{\mathbb{S}_{t_1}} (D^*) \cap T(\infty, t_1) \Big) \Big| \geq 2^{d-1}k^*n^* \bigg).
		\end{equation*}
	Since $\Big( \frac{\epsilon^*}{10} \Big)^{2^{d-1}} > \frac{1}{2^{2^{d-1}}} \Big( \frac{\epsilon^*}{5} \Big)^{d2^{d-1}}$, this, along with \eqref{equation new claim 1}, implies that there exists $l(t_1) \in 2\hat{k}\mathbb{N}$ such that
	\begin{equation*}
		\mathbb{P}_{p} \bigg( \Big| \Big( C^{\beta}_{B \big([-l(t_1), l(t_1)]^{d-1} \times [0, t_1] \big)} (D^*) \cap T(l(t_1), t_1) \Big) \Big| \geq 2^{d-1}k^*n^* \bigg) > 1 - \bigg( \frac{\epsilon^*}{10} \bigg)^{2^{d-1}}.
	\end{equation*}
		Define
		\begin{equation*}
			s(l(t_1), t_1) := \underset{s \in 2\hat{k}\mathbb{N}}{\min} \hspace{1 mm} \Bigg\{ s \geq t_1 : \mathbb{P}_{p} \bigg( \Big| \Big( C^{\beta}_{B \big([-l(t_1), l(t_1)]^{d-1} \times [0, s] \big)} (D^*) \cap T(l(t_1), s) \Big) \Big| \geq 2^{d-1}k^*n^* \bigg) \leq 1 - \bigg( \frac{\epsilon^*}{10} \bigg)^{2^{d-1}} \Bigg\},
		\end{equation*}
		where $\min \hspace{1 mm} \emptyset = \infty$.\footnote{Note that by construction, $s(l(t_1), t_1) > t_1$.} Since $p < 1$ and $B(l(t_1), \infty)$ is a one-dimensional cylinder, we have $s(l(t_1), t_1) \in 2\hat{k}\mathbb{N}$.
		
		Construct three sequences of positive integers $(l_k)_{k \in \mathbb{N}}$, $(t_k)_{k \in \mathbb{N}}$, and $(s_k)_{k \in \mathbb{N}}$ as follows. Let $l_1 = \max \big\{ l(t_1), r^* \big\}$ and $s_1 = s(l_1, t_1)$. Suppose $k \geq 1$ and suppose that $l_1, \ldots, l_k$, $t_1, \ldots, t_k$ and $s_1, \ldots, s_k$ are constructed. Choose $t_{k+1} = s_k + 2\hat{k}$, $l_{k+1} = \max \big\{ l(t_{k+1}), (l_k+ 2\hat{k}) \big\}$, and $s_{k+1} = s(l_{k+1},t_{k+1})$. Note that by construction, for all $k \in \mathbb{N}$, we have $l_k, s_k \in 2\hat{k}\mathbb{N}$ with $l_{k+1} \geq l_k +1$ and $s_{k+1} > s_k +1$.\footnote{Since $s_{k+1} > t_{k+1} = s_k + 2\hat{k}$.} Furthermore, note that for all $k \in \mathbb{N}$,
		\begin{subequations}\label{equation part ii 1 final}
			\begin{equation}\label{equation part ii 1 a}
				\mathbb{P}_{p} \bigg( \Big| \Big(C^{\beta}_{B \big([-l_k, l_k]^{d-1} \times [0, t_k] \big)} (D^*) \cap T(l_k,t_k) \Big) \Big| \geq 2^{d-1}k^*n^* \bigg) > 1 - \bigg( \frac{\epsilon^*}{10} \bigg)^{2^{d-1}}, \mbox{ and}
			\end{equation}
			\begin{equation}\label{equation part ii 1}
				\mathbb{P}_{p} \bigg( \Big| \Big(C^{\beta}_{B \big([-l_k, l_k]^{d-1} \times [0, s_k] \big)} (D^*) \cap T(l_k,s_k) \Big) \Big| \geq 2^{d-1}k^*n^* \bigg) \leq 1 - \bigg( \frac{\epsilon^*}{10} \bigg)^{2^{d-1}}.
			\end{equation}
		\end{subequations}
		For all $k \in \mathbb{N}$, define the random variable $N_k : \Omega \longrightarrow \mathbb{N}$ such that for all $\omega \in \Omega$,
		\begin{equation*}
			N_k(\omega) = \Big| \Big( C^{\beta}_{B \big([-l_k, l_k]^{d-1} \times [0, s_k] \big)} (D^*) \cap \big( T(l_k, s_k) \cup F(l_k, s_k) \big) \Big) \Big|.
		\end{equation*}

		\begin{claim}\label{claim for part ii}
			There exists $k_0 \in \mathbb{N}$ such that
			\begin{equation*}
				\mathbb{P}_{p} \Big( N_{k_0} \geq 2^{d-1}k^*n^* + 2(d-1)2^{d-2}n^* \Big) > 1 - \frac{1}{2^{2^{d-1}}} \bigg( \frac{\epsilon^*}{5} \bigg)^{d2^{d-1}}.
			\end{equation*}
		\end{claim}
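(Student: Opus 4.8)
The plan is to obtain Claim~\ref{claim for part ii} from Claim~\ref{claim survives prob 0 case 1}, applied with the target threshold, together with the two defining inequalities \eqref{equation part ii 1 a}--\eqref{equation part ii 1} of the sequences $(l_k),(t_k),(s_k)$. The point is that $s_k$ was defined precisely as the first level at which the width-$l_k$ top slice loses its guarantee of carrying $2^{d-1}k^*n^*$ cluster vertices, so the cluster mass present at level $t_k$ must, by level $s_k$, have been pushed onto the lateral faces $F(l_k,s_k)$, and this extra mass accounts for the surplus $2(d-1)2^{d-2}n^*$ beyond the $2^{d-1}k^*n^*$ vertices already present near the top.

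First I would strengthen Claim~\ref{claim survives prob 0 case 1}: apply it with $N := 2^{d-1}k^*n^* + 2(d-1)2^{d-2}n^*$ and repeat, verbatim, the monotone-limit argument that produced \eqref{equation new claim 1}. Since the corresponding events increase to $A = \{|C^{\beta}_{\mathbb{H}}(D^*)| = \infty\}$ and $\mathbb{P}_p(A) > 1 - \frac{1}{2^{2^{d-1}}}(\epsilon^*/5)^{d2^{d-1}}$ by \eqref{equation from lemma almost 1 prob case 1}, one gets $\kappa \in 2\hat k\mathbb{N}$ and an event
\begin{equation*}
	A^{\dagger} := \Big\{ \big| C^{\beta}_{\mathbb{S}_t}(D^*) \cap T(\infty,t) \big| \geq N \text{ for all } t \geq \kappa \Big\}, \qquad \mathbb{P}_p(A^{\dagger}) > 1 - \tfrac{1}{2^{2^{d-1}}}(\epsilon^*/5)^{d2^{d-1}}.
\end{equation*}
Now fix $k_0$ large enough that $s_{k_0} \geq \kappa$ and, in addition, that $l_{k_0}$ and $t_{k_0}$ are large enough for the width-$l_{k_0}$ top slice at level $t_{k_0}$ to capture essentially all of the slab cluster there (possible because $l_k \to \infty$, $t_k \to \infty$, and the width-$l$ count at a fixed level increases to the slab count as $l \to \infty$, as used just after \eqref{equation new claim 1}). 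On $A^{\dagger}$ the slab cluster has at least $N$ vertices at level $s_{k_0}$; I would classify each according to whether it is joined to $D^*$ by a $\beta$-backbend path confined to the box $B_{k_0} := B([-l_{k_0},l_{k_0}]^{d-1} \times [0,s_{k_0}])$. Those that are lie in $C^{\beta}_{B_{k_0}}(D^*) \cap T(l_{k_0},s_{k_0})$; for the rest, following a connecting path up to its first departure from $[-l_{k_0},l_{k_0}]^{d-1}$ yields a vertex of $C^{\beta}_{B_{k_0}}(D^*) \cap F(l_{k_0},s_{k_0})$.

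The crux is to turn this classification into the bound $N_{k_0} \geq 2^{d-1}k^*n^* + 2(d-1)2^{d-2}n^*$ with the stated probability, and the obstacle is that a priori many level-$s_{k_0}$ vertices lying outside $B_{k_0}$ can be served by a single lateral vertex, so the raw first-exit count can be far too small. To control this I would track the cluster shell by shell between levels $t_{k_0}$ and $s_{k_0}$, exactly as in Lemma~21 of \citet{bezuidenhout1990critical}: by \eqref{equation part ii 1 a} the width-$l_{k_0}$ cluster has at least $2^{d-1}k^*n^*$ vertices at level $t_{k_0}$, and — by the choice of $l_{k_0}$ above — these are spread across the full width, so as the level rises to $s_{k_0}$ the width-$l_{k_0}$ top slice can drop below $2^{d-1}k^*n^*$ (which is exactly what the threshold definition \eqref{equation part ii 1} of $s_{k_0}$ permits) only by depositing the lost vertices on the lateral faces at a comparable number of distinct sites; this yields the surplus $2(d-1)2^{d-2}n^*$ on $F(l_{k_0},s_{k_0})$, while the $\geq 2^{d-1}k^*n^*$ vertices recovered near the top (either the surviving level-$s_{k_0}$ vertices inside $B_{k_0}$ or, if too few survive, their traces on $F(l_{k_0},s_{k_0})$) supply the remaining count. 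The probabilistic bookkeeping — ensuring that the failure probabilities of $A^{\dagger}$, of \eqref{equation part ii 1 a} at level $t_{k_0}$, and of the shell-tracking estimates all fit together under $\frac{1}{2^{2^{d-1}}}(\epsilon^*/5)^{d2^{d-1}}$ — is routine once $k_0$ is taken large enough, and is the reason \eqref{equation from lemma almost 1 prob case 1} was stated with that particular right-hand side.
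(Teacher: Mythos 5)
Your opening step (rerunning the monotone-limit argument of Claim \ref{claim survives prob 0 case 1} with the larger threshold $N = 2^{d-1}k^*n^* + 2(d-1)2^{d-2}n^*$ to get the event $A^{\dagger}$) is sound, but it is not where the difficulty lies, and the step you yourself flag as "the crux" contains a genuine gap. The definition of $s_{k_0}$ via \eqref{equation part ii 1} is a statement about \emph{probabilities}, not about configurations: it says only that with probability at least $(\epsilon^*/10)^{2^{d-1}}$ the top slice of the box fails to carry $2^{d-1}k^*n^*$ cluster vertices. It does not assert that, on a given configuration, vertices "lost" from the top slice have been "pushed onto the lateral faces"; on a set of configurations of positive probability the portion of the cluster confined to $B([-l_{k_0},l_{k_0}]^{d-1}\times[0,s_{k_0}])$ simply terminates without ever reaching $F(l_{k_0},s_{k_0})$. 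So there is no pathwise conservation law available, and the shell-by-shell tracking you invoke cannot manufacture $2(d-1)2^{d-2}n^*$ \emph{distinct} face vertices; the many-to-one collapse under the first-exit map that you correctly identify as the obstacle is never actually overcome. (Relatedly, $l_{k_0}$ is chosen to make the top count large at level $t_{k_0}$, not at level $s_{k_0}$ --- indeed $s_{k_0}$ is by construction a level where the top count is \emph{not} guaranteed --- so one cannot choose $k_0$ so that the box captures "essentially all" of the slab cluster at level $s_{k_0}$.)

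The paper closes this gap probabilistically rather than deterministically: it runs a second extinction argument, this time for the random variables $N_k$ themselves. Since the boxes are nested ($l_{k+1}\geq l_k+1$ and $s_{k+1}>s_k+1$), conditioned on the boundary set $S=C^{\beta}_{B([-l_k,l_k]^{d-1}\times[0,s_k])}(D^*)\cap(T(l_k,s_k)\cup F(l_k,s_k))$ having fewer than $N$ elements, the event that every edge from $S$ into the boundary of the next box is closed has probability at least $(1-p)^{2^{2d-1}(k^*n^*+(d-1)n^*)}>0$ and forces $N_{k+1}=0$, hence extinction of $C^{\beta}_{\mathbb{H}}(D^*)$. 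Iterating along stopping times exactly as in Claim \ref{claim survives prob 0 case 1} gives \eqref{equation survives prob 0 box}, i.e.\ $\mathbb{P}_p(|C^{\beta}_{\mathbb{H}}(D^*)|=\infty \mbox{ and } N_k<N \mbox{ i.o.})=0$, and combining this with \eqref{equation from lemma almost 1 prob case 1} via the same monotone-limit step that produced \eqref{equation new claim 1} yields the required $k_0$. Note also that \eqref{equation part ii 1 a} and \eqref{equation part ii 1} play no role in the proof of the claim itself; \eqref{equation part ii 1} enters only \emph{after} the claim, in the FKG step that converts the bound on $N_{k_0}$ into the bound \eqref{equation part ii 2 final} on the face count alone. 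Your proposal tries to use these inequalities inside the proof as if they were pathwise facts, which is the wrong direction.
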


		\begin{claimproof}[\textbf{Proof of Claim \ref{claim for part ii}.}]
			For all $k \in \mathbb{N}$, let $\mathcal{G}_k$ be the $\sigma$-algebra generated by $\big\{ N_u : 1 \leq u \leq k \big\}$. Fix $k \in \mathbb{N}$ and $G \in \mathcal{G}_{k}$. For a vertex $x \in \big( T(l_k, s_k) \cup F(l_k, s_k) \big)$, define the following event:
			\begin{equation*}
				A_x := \bigg\{ \omega \in \Omega : \langle x, y \rangle \mbox{ is closed whenever } y \in \Big( T \big(l_k + 1, s_k + 1 \big) \cup F \big(l_k + 1, s_k + 1 \big) \Big) \bigg\}.
			\end{equation*}
			Clearly, $\mathbb{P}_{p} (A_x) \geq (1 - p)^{2^d}$ for all $x \in \big( T(l_k, s_k) \cup F(l_k, s_k) \big)$. Fix $S \subseteq \big( T(l_k, s_k) \cup F(l_k, s_k) \big)$ such that $|S| < \big[ 2^{d-1}k^*n^* + 2(d-1)2^{d-2}n^* \big]$. Recall that for all $k \in \mathbb{N}$, we have $l_{k+1} \geq l_k +1$ and $s_{k+1} > s_k +1$. It follows that
			\begin{equation*}
				\begin{aligned}
					& \hspace{1 mm} \mathbb{P}_{p} \Bigg( N_{k+1} = 0 \hspace{2 mm} \biggm| \hspace{2 mm} G \mbox{ and } \Big( C^{\beta}_{B \big([-l_k, l_k]^{d-1} \times [0, s_k] \big)} (D^*) \cap \big( T(l_k, s_k) \cup F(l_k, s_k) \big) \Big) = S \Bigg)\\
					\geq \hspace{1 mm} & \hspace{1 mm} \mathbb{P}_{p} \bigg( \underset{x \in S}{\cap} \hspace{1 mm} A_x \bigg) \hspace{10 mm} \mbox{(by the construction of the $\beta$-backbend percolation process)}\\
					= \hspace{1 mm} & \hspace{1 mm} \underset{x \in S}{\prod} \hspace{1 mm} \mathbb{P}_{p} \big( A_x \big) \hspace{12 mm} \mbox{(since each edge is independent)}\\
					> \hspace{1 mm} & \hspace{1 mm} (1-p)^{2^{2d-1} (k^*n^* + (d-1)n^*)}.
				\end{aligned}
			\end{equation*}
			This, along with Remark \ref{remark probability inequality}, yields
			\begin{equation}\label{equation atleast one open path box}
				\mathbb{P}_{p} \bigg( N_{k+1} > 0 \hspace{2 mm} \Bigm| \hspace{2 mm} G \mbox{ and } N_k < \big[ 2^{d-1}k^*n^* + 2(d-1)2^{d-2}n^* \big] \bigg) < 1 - (1-p)^{2^{2d-1} (k^*n^* + (d-1)n^*)}.
			\end{equation}
		Since $p < 1$, we have $1 - (1-p)^{2^{2d-1} (k^*n^* + (d-1)n^*)} < 1$. This, together with \eqref{equation atleast one open path box} and an argument similar to the one which we use to complete the proof of Claim \ref{claim survives prob 0 case 1} from \eqref{equation atleast one open path}, yields
		\begin{equation}\label{equation survives prob 0 box}
			\mathbb{P}_{p} \Big( \big| C^{\beta}_\mathbb{H} (D^*) \big| = \infty \mbox{ and } N_k < \big[ 2^{d-1}k^*n^* + 2(d-1)2^{d-2}n^* \big] \mbox{ for infinitely many } k \in \mathbb{N} \Big) = 0.
		\end{equation}
		
		The proof of Claim \ref{claim for part ii} follows from \eqref{equation from lemma almost 1 prob case 1} and \eqref{equation survives prob 0 box} by using an argument similar to the one which we use to obtain \eqref{equation new claim 1} from Claim \ref{claim survives prob 0 case 1}.
		\end{claimproof}

		By Claim \ref{claim for part ii}, along with FKG inequality, we have
		\begin{equation*}
			\begin{aligned}
				\frac{1}{2^{2^{d-1}}} \bigg( \frac{\epsilon^*}{5} \bigg)^{d2^{d-1}} \hspace{1 mm} \geq \hspace{1 mm} & \hspace{1 mm} \mathbb{P}_{p} \bigg( \Big| \Big( C^{\beta}_{B \big([-l_{k_0}, l_{k_0}]^{d-1} \times [0, s_{k_0}] \big)} (D^*) \cap T(l_{k_0}, s_{k_0}) \Big) \Big| < 2^{d-1}k^*n^* \bigg)
				\\
				& \times \mathbb{P}_{p} \bigg( \Big| \Big( C^{\beta}_{B \big([-l_{k_0}, l_{k_0}]^{d-1} \times [0, s_{k_0}] \big)} (D^*) \cap F(l_{k_0}, s_{k_0}) \Big) \Big| < 2(d-1)2^{d-2}n^* \bigg),
			\end{aligned}
		\end{equation*}
		which, together with \eqref{equation part ii 1}, yields
		\begin{equation*}
			\bigg( \frac{\epsilon^*}{5} \bigg)^{2(d-1)2^{d-2}} > \mathbb{P}_{p} \bigg( \Big| \Big( C^{\beta}_{B \big([-l_{k_0}, l_{k_0}]^{d-1} \times [0, s_{k_0}] \big)} (D^*) \cap F(l_{k_0}, s_{k_0}) \Big) \Big| < 2(d-1)2^{d-2}n^* \bigg).
		\end{equation*}
		By FKG inequality and the construction of the $\beta$-backbend percolation process, this implies
		\begin{equation}\label{equation part ii 2 final}
			\mathbb{P}_{p} \bigg( \Big| \Big( C^{\beta}_{B \big([-l_{k_0}, l_{k_0}]^{d-1} \times [0, s_{k_0}] \big)} (D^*) \cap F^{v}_{(d-1)^{+}}(l_{k_0}, s_{k_0}) \Big) \Big| < n^* \bigg) < \frac{\epsilon^*}{5} \mbox{ for all } v \in \{-1, 1\}^{d-2}.
		\end{equation}
		Furthermore, by the construction of $l_{k_0}$, $t_{k_0}$, and $s_{k_0}$, we have
		\begin{equation*}
			\bigg( \frac{\epsilon^*}{10} \bigg)^{2^{d-1}} > \mathbb{P}_{p} \bigg( \Big| \Big( C^{\beta}_{B \big([-l_{k_0}, l_{k_0}]^{d-1} \times [0, s_{k_0} - 2\hat{k}] \big)} (D^*) \cap T(l_{k_0}, s_{k_0} - 2\hat{k}) \Big) \Big| < 2^{d-1}k^*n^* \bigg)
		\end{equation*}
		By FKG inequality and the construction of the $\beta$-backbend percolation process, this implies
		\begin{equation}\label{equation part ii 3}
			\mathbb{P}_{p} \bigg( \Big| \Big( C^{\beta}_{B \big([-l_{k_0}, l_{k_0}]^{d-1} \times [0, s_{k_0} - 2\hat{k}] \big)} (D^*) \cap T^u(l_{k_0}, s_{k_0} - 2\hat{k}) \Big) \Big| < k^*n^* \bigg) < \frac{\epsilon^*}{10} \mbox{ for all } u \in \{-1, 1\}^{d-1}.
		\end{equation}

		\begin{claim}\label{claim for part ii 2nd}
			For all $u \in \{-1, 1\}^{d-1}$, we have
			\begin{equation*}
				\mathbb{P}_{p} \bigg( \Big| \Big( C^{\beta}_{B \big([-l_{k_0}, l_{k_0}]^{d-1} \times [0, s_{k_0}] \big)} (D^*) \cap T^u(l_{k_0}, s_{k_0}) \Big) \Big| \geq n^* \bigg) > 1 - \frac{\epsilon^*}{5}.
			\end{equation*}
		\end{claim}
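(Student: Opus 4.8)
Fix $u \in \{-1,1\}^{d-1}$. The plan is to obtain the $n^*$ required vertices of $T^{u}(l_{k_0}, s_{k_0})$ by taking the $k^*n^*$ vertices of $C^{\beta}_{B([-l_{k_0}, l_{k_0}]^{d-1} \times [0, s_{k_0}-2\hat{k}])}(D^*) \cap T^{u}(l_{k_0}, s_{k_0}-2\hat{k})$ supplied with probability $> 1-\epsilon^*/10$ by \eqref{equation part ii 3}, and pushing each of them up $2\hat{k}$ levels along a short deterministic path. For $x$ with $x_d = s_{k_0}-2\hat{k}$ and $x \in T^{u}(l_{k_0}, s_{k_0}-2\hat{k})$, let $P(x)$ be the path obtained by applying $\hat{k}$ times the two-step move $x' \mapsto x' + (a_1, \ldots, a_{d-1}, 1) \mapsto x' + (0, \ldots, 0, 2)$, where $a_i := 1$ if $x_i \leq 0$ and $a_i := -1$ if $x_i > 0$; this keeps all $d-1$ leading coordinates of every vertex of $P(x)$ inside $[-l_{k_0}, l_{k_0}]$. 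Then $P(x)$ has $2\hat{k}$ edges, is strictly increasing in its last coordinate, lies in $B([-l_{k_0}, l_{k_0}]^{d-1} \times [0, s_{k_0}])$ (here $s_{k_0}-2\hat{k} \geq 0$ because $s_{k_0} \in 2\hat{k}\mathbb{N}$), and ends at $x + (0, \ldots, 0, 2\hat{k}) \in T^{u}(l_{k_0}, s_{k_0})$. Concatenating any open $\beta$-backbend path in $B([-l_{k_0}, l_{k_0}]^{d-1} \times [0, s_{k_0}-2\hat{k}])$ from $D^*$ to $x$ with an open $P(x)$ yields an open $\beta$-backbend path in $B([-l_{k_0}, l_{k_0}]^{d-1} \times [0, s_{k_0}])$ from $D^*$ to $x + (0, \ldots, 0, 2\hat{k})$: the appended portion attains a new record level at each of its vertices and hence satisfies the backbend constraint automatically, and it meets the initial portion only at $x$ since it lies strictly above level $s_{k_0}-2\hat{k}$. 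Thus $x + (0, \ldots, 0, 2\hat{k})$ belongs to $C^{\beta}_{B([-l_{k_0}, l_{k_0}]^{d-1} \times [0, s_{k_0}])}(D^*) \cap T^{u}(l_{k_0}, s_{k_0})$.

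Next I record two independence facts. First, every edge of every $P(x)$ has an endpoint strictly above level $s_{k_0}-2\hat{k}$, hence does not lie in $B([-l_{k_0}, l_{k_0}]^{d-1} \times [0, s_{k_0}-2\hat{k}])$, so the family $\{P(x) \text{ open}\}_x$ is independent of the random set $S := C^{\beta}_{B([-l_{k_0}, l_{k_0}]^{d-1} \times [0, s_{k_0}-2\hat{k}])}(D^*) \cap T^{u}(l_{k_0}, s_{k_0}-2\hat{k})$. Second, for $x \neq x'$ with $x_d = x'_d = s_{k_0}-2\hat{k}$ the edge sets of $P(x)$ and $P(x')$ are disjoint: an edge of a probe whose lower endpoint is at an even offset from level $s_{k_0}-2\hat{k}$ has that endpoint's leading coordinates equal to those of its base vertex, while an edge whose lower endpoint is at an odd offset sits at an odd level, so a shared edge would force $x = x'$. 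Consequently, conditionally on $\{S = s\}$ the events $\{P(x) \text{ open}\}_{x \in s}$ are mutually independent, each of probability $p^{2\hat{k}}$.

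To finish, work on $\{S = s\}$ for a fixed $s$ with $|s| \geq k^*n^*$, and (using a rule depending only on $s$) split a $k^*n^*$-element subset of $s$ into $n^*$ blocks of size $k^*$. For each block the probability that none of its $k^*$ probes is open is $(1-p^{2\hat{k}})^{k^*}$, and, the probes from distinct vertices being edge-disjoint, the blocks behave independently; hence with conditional probability at least $\big(1-(1-p^{2\hat{k}})^{k^*}\big)^{n^*} > 1-\epsilon^*/10$ --- by the defining property of $k^*$ --- every block contains an open probe, and then the corresponding pairwise distinct endpoints furnish at least $n^*$ vertices of $C^{\beta}_{B([-l_{k_0}, l_{k_0}]^{d-1} \times [0, s_{k_0}])}(D^*) \cap T^{u}(l_{k_0}, s_{k_0})$. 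Combining this with the independence of the probes from $S$ and with $\mathbb{P}_p(|S| \geq k^*n^*) > 1-\epsilon^*/10$ from \eqref{equation part ii 3}, the event $\big\{|C^{\beta}_{B([-l_{k_0}, l_{k_0}]^{d-1} \times [0, s_{k_0}])}(D^*) \cap T^{u}(l_{k_0}, s_{k_0})| \geq n^*\big\}$ has probability greater than $(1-\epsilon^*/10)^2 > 1-\epsilon^*/5$, which is the assertion.

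The step I expect to be the main obstacle is the combinatorial bookkeeping for the up-probes: checking that they can be routed so as to remain inside the lateral box $[-l_{k_0}, l_{k_0}]^{d-1}$ (the sign-based choice of $a_i$ handles the boundary vertices $|x_i| = l_{k_0}$), and, crucially, that probes launched from distinct vertices use disjoint edges, since it is this disjointness --- not mere positive correlation --- that upgrades ``each of the $k^*$ probes in a block is open with probability $p^{2\hat{k}}$'' to the genuine independence needed for the $\big(1-(1-p^{2\hat{k}})^{k^*}\big)^{n^*}$ estimate.
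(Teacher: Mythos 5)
Your proof is correct and follows essentially the same route as the paper. You ``probe'' upward from the $k^*n^*$ vertices of $T^u(l_{k_0}, s_{k_0}-2\hat{k})$ supplied by \eqref{equation part ii 3}, partition them into $n^*$ blocks of $k^*$ probes each, and use the defining property of $k^*$ together with the independence of the probes from the lower-box configuration and the fact $(1-\epsilon^*/10)^2 > 1-\epsilon^*/5$; this is exactly the paper's strategy, with the same event $A_x$ (you fix one deterministic representative path, the paper takes the union over admissible odd-level detours, but both yield probability $\geq p^{2\hat{k}}$). The only cosmetic difference is that the paper passes from $\mathbb{P}_p\big(\cap_i B_i\big)$ to $\prod_i \mathbb{P}_p(B_i)$ via FKG, whereas you observe that the probes' edge sets are pairwise disjoint and hence the block events are genuinely independent; both are valid, and your observation is in fact slightly sharper.
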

		
		\begin{claimproof}[\textbf{Proof of Claim \ref{claim for part ii 2nd}.}]
			Fix $u \in \{-1, 1\}^{d-1}$. For a vertex $x \in T^u(l_{k_0}, s_{k_0} - 2\hat{k})$, define the following event:
			\begin{equation*}
				A_x := \left\{ \omega \in \Omega : 
				\begin{aligned}
					& \mbox{ there is an open path } ( x^0 = x, \ldots, x^{2\hat{k}} ) \mbox{ in } B \big([-l_{k_0}, l_{k_0}]^{d-1} \times [0, s_{k_0}] \big) \mbox{ from } x,\\
					& \mbox{ where } x^{2i} = x + (0, \ldots, 0, 2i ) \mbox{ for all } i \leq \hat{k} 
				\end{aligned}
				\right\}.	
			\end{equation*}
			By construction, $\{A_x\}_{x \in T^u(l_{k_0}, s_{k_0} - 2\hat{k})}$ are independent events. Furthermore, note that $\mathbb{P}_{p} (A_x) \geq p^{2\hat{k}}$ for all $x \in T^u(l_{k_0}, s_{k_0} - 2\hat{k})$. Fix $S \subseteq T^u(l_{k_0}, s_{k_0} - 2\hat{k})$ such that $|S| \geq k^*n^*$. Make a partition $\big\{ S_1,\ldots, S_{n^*} \big\}$ of $S$ such that $\vert S_i \vert \geq k^*$ for all $i = 1, \ldots, n^*$. For all $i = 1, \ldots, n^*$, define
			\begin{equation*}
				B_i : = \underset{x \in S_i}{\cup} \hspace{1 mm} A_x.
			\end{equation*}
			Clearly, $\mathbb{P}_p(B_i) \geq 1 - (1 - p^{2\hat{k}})^{k^*}$ for all $i = 1, \ldots, n^*$.\footnote{Note that since $\{A_x\}_{x \in T^u(l_{k_0}, s_{k_0} - 2\hat{k})}$ are independent events, $\{A^c_x\}_{x \in T^u(l_{k_0}, s_{k_0} - 2\hat{k})}$ are also independent events.} Now,
			\begin{equation*}
				\begin{aligned}
					& \hspace{1 mm} \mathbb{P}_{p} 
					\left( 
					\begin{aligned}
						\Big| \Big( C^{\beta}_{B \big([-l_{k_0}, l_{k_0}]^{d-1} \times [0, s_{k_0}] \big)} (D^*) \cap & T^u(l_{k_0}, s_{k_0}) \Big) \Big| \\
						& \geq n^*
					\end{aligned} \hspace{3.5 mm} \vrule \hspace{3.5 mm} 
					\begin{aligned}
						\Big( C^{\beta}_{B \big([-l_{k_0}, l_{k_0}]^{d-1} \times [0, s_{k_0} - 2\hat{k}] \big)} (D^*) \cap & T^u(l_{k_0}, s_{k_0} - 2\hat{k}) \Big) \\
						& = S
					\end{aligned}
					\right)
					\\
					\geq \hspace{1 mm} & \hspace{1 mm} \mathbb{P}_{p} \bigg( \underset{i=1}{\overset{n^*}{\cap}} \hspace{1 mm} B_i \bigg) \hspace{10 mm} \mbox{(by the construction of the $\beta$-backbend percolation process)}\\
					\geq \hspace{1 mm} & \hspace{1 mm} \underset{i=1}{\overset{n^*}{\prod}} \hspace{1 mm} \mathbb{P}_{p} ( B_i ) \hspace{12 mm} \mbox{(by FKG inequality)}\\
					\geq \hspace{1 mm} & \hspace{1 mm} \underset{i=1}{\overset{n^*}{\prod}} \hspace{1 mm} \Big( 1 - (1 - p^{2\hat{k}})^{k^*} \Big)\\
					> \hspace{1 mm} & \hspace{1 mm} 1 - \frac{\epsilon^*}{10}. \hspace{12 mm} \mbox{(by the definition of $k^*$)}
				\end{aligned}
			\end{equation*}
			This, together with Remark \ref{remark probability inequality}, implies
			\begin{equation*}
				\begin{aligned}
					& \mathbb{P}_{p} 
					\left( 
					\begin{aligned}
						\Big| \Big( C^{\beta}_{B \big([-l_{k_0}, l_{k_0}]^{d-1} \times [0, s_{k_0}] \big)} (D^*) \cap & T^u(l_{k_0}, s_{k_0}) \Big) \Big| \\
						& \geq n^*
					\end{aligned} \hspace{4 mm} \vrule \hspace{4 mm} 
					\begin{aligned}
						\Big| \Big( C^{\beta}_{B \big([-l_{k_0}, l_{k_0}]^{d-1} \times [0, s_{k_0} - 2\hat{k}] \big)} (D^*) \cap & T^u(l_{k_0}, s_{k_0} - 2\hat{k}) \Big) \Big| \\
						& \geq k^*n^*
					\end{aligned}
					\right)
					\\
					& > \hspace{1 mm} 1 - \frac{\epsilon^*}{10},
				\end{aligned}			 
			\end{equation*}
			which, along with \eqref{equation part ii 3}, yields
			\begin{equation*}
				\mathbb{P}_{p} \bigg( \Big| \Big( C^{\beta}_{B \big([-l_{k_0}, l_{k_0}]^{d-1} \times [0, s_{k_0}] \big)} (D^*) \cap T^u(l_{k_0}, s_{k_0}) \Big) \Big| \geq n^* \bigg) > 1 - \frac{\epsilon^*}{5}.
			\end{equation*}
			This completes the proof of Claim \ref{claim for part ii 2nd}.
		\end{claimproof}

		Let $l^* = l_{k_0}$ and $t^* = s_{k_0}$. By construction, we have $l^*, t^* \in 2\hat{k}\mathbb{N}$ with $l^* \geq r^*$.\footnote{By construction, $l^* = l_{k_0} \geq l_1 \geq r^*$.} This, along with \eqref{equation part ii 2 final} and Claim \ref{claim for part ii 2nd}, completes the proof of Lemma \ref{lemma enough vertices case 1}. \hfill $\qed$

		\subsubsection{Proof of Lemma \ref{lemma coupling case 1}}\label{appendix Proof of lemma coupling case 1}

		Fix an arbitrary $x \in B \big([-l^*,l^*]^{d-2} \times 0 \times 0 \big)$. Define the random variable $\tau : \Omega \longrightarrow 2\hat{k}\mathbb{N} \cup \{\infty\}$ such that for all $\omega \in \Omega$,
		\begin{equation*}
			\tau(\omega) := \min
			\left\{ t \in 2\hat{k}\mathbb{N} : 
			\begin{aligned}
				& \hspace{1 mm} (D^* + z) \subseteq C^{\beta}_{B \big([-2l^*, 2l^*]^{d-2} \times [-l^*, l^* + 2r^*] \times [0, t] \big)}(D^* + x)\\
				& \mbox{ for some } z \in B \big([-l^*, l^*]^{d-2} \times (l^* + r^*) \times t \big)
			\end{aligned}
			\right\},	
		\end{equation*}
		where $\min \hspace{1 mm} \emptyset = \infty$.

		\begin{claim}\label{claim tau is less than S probability}
			$\mathbb{P}_{p} \Big( \tau \in [0,s^*] \Big) > \Big(1 - \frac{\epsilon^*}{5} \Big)^2$.
		\end{claim}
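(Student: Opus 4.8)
The plan is to cash in Lemma~\ref{lemma enough vertices case 1} against the separation-and-independence bookkeeping encoded in the constants $\alpha^*$, $m^*$, $n^*$.

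First I would transfer Lemma~\ref{lemma enough vertices case 1}(ii) to the translated seed $D^*+x$. Since $x_{d-1}=x_d=0$, translation by $x$ moves only the first $d-2$ coordinates, and the $\beta$-backbend condition involves the $d$-th coordinate alone; so by this partial translation invariance the lemma still holds with $D^*$ replaced by $D^*+x$ and $B([-l^*,l^*]^{d-1}\times[0,t^*])$ replaced by its $x$-translate $B'$, which (because $|x_i|\le l^*$, $l^*\ge r^*$, $t^*\le s^*$) lies inside $B([-2l^*,2l^*]^{d-2}\times[-l^*,l^*+2r^*]\times[0,s^*])$. Choosing the octant $w\in\{-1,1\}^{d-2}$ with $w_i=1$ when $x_i\le 0$ and $w_i=-1$ when $x_i>0$, every vertex of $F^{w}_{(d-1)^{+}}(l^*,t^*)+x$ has its first $d-2$ coordinates in $[-l^*,l^*]$, its $(d-1)$-th coordinate equal to $l^*$, and its $d$-th coordinate at most $t^*$. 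So there is an event $E_1$ with $\mathbb{P}_p(E_1)>1-\tfrac{\epsilon^*}{5}$ on which $C^{\beta}_{B([-2l^*,2l^*]^{d-2}\times[-l^*,l^*+2r^*]\times[0,s^*])}(D^*+x)$ contains at least $n^*$ vertices of $F^{w}_{(d-1)^{+}}(l^*,t^*)+x$.

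On $E_1$, by the defining property of $n^*$ I would fix (by a deterministic rule) a set $S$ of at least $m^*$ of those face vertices that are pairwise at $L^{\infty}$-distance $\ge 4r^*+4\hat{k}+1$. For each $v\in S$, relabelling coordinate $1$ as coordinate $d-1$ in the second box defining $\alpha^*$ — legitimate since permuting the first $d-1$ coordinates is a symmetry of $(\mathbb{V},\mathbb{E})$ and of the $\beta$-backbend condition — and then translating the origin to $v$, I obtain an event $A_v$ with $\mathbb{P}_p(A_v)\ge\alpha^*$ on which the oriented ($\beta\equiv 0$) open cluster of $v$ inside the finger box $B\big(\prod_{i\le d-2}[v_i-r^*,v_i+r^*]\times[l^*,l^*+2r^*]\times[v_d,v_d+2r^*+2\hat{k}]\big)$ contains the whole slab $B\big(\prod_{i\le d-2}[v_i-r^*,v_i+r^*]\times[l^*,l^*+2r^*]\times[v_d+2r^*,v_d+2r^*+2\hat{k}]\big)$. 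Let $z=z(v)$ have $z_i=v_i$ for $i\le d-2$, $z_{d-1}=l^*+r^*$, and $z_d$ a multiple of $2\hat{k}$ inside $[v_d+2r^*,v_d+2r^*+2\hat{k}]$ (one exists since this interval has $2\hat{k}+1$ consecutive integers); then $z_d\le v_d+2r^*+2\hat{k}\le s^*$, the height-$z_d$ slice of the slab above is exactly $D^*+z$, and, crucially, any backbone $\beta$-backbend path inside $B'$ reaching $v$ has record level $h$ at $v$ with $h-v_d\le\beta_h\le\max_{i<\hat{k}}\beta_i\le 2r^*$ by $\hat{k}$-cyclicity, so $z_d\ge v_d+2r^*\ge h$. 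Concatenating such a backbone path (all heights $\le h\le z_d$) with the oriented open paths from $v$ to the vertices of $D^*+z$ (all heights $\le z_d$), and extracting self-avoiding sub-paths as in the proof of Claim~\ref{claim equalilty of percolation probablity cycle}, produces open $\beta$-backbend paths from $D^*+x$ to each vertex of $D^*+z$ lying in $B([-2l^*,2l^*]^{d-2}\times[-l^*,l^*+2r^*]\times[0,z_d])$ — the backbend constraint survives the junction because the oriented continuation never drops below $v_d\ge h-\beta_h$. Hence $A_v\cap E_1\subseteq\{\tau\le z_d\}\subseteq\{\tau\le s^*\}$.

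Finally I would combine the probabilities. The event $E_1$ and the selected set $S$ are measurable with respect to the edges of $B'$; each $A_v$ is measurable with respect to the edges of the finger box of $v$, and since no edge of $(\mathbb{V},\mathbb{E})$ lies inside a hyperplane $\{y_{d-1}=c\}$, this finger box shares no edge with $B'$, while the $L^{\infty}$-separation of $S$ makes distinct finger boxes edge-disjoint. Therefore, conditionally on $E_1$ and on the value of $S$, the events $\{A_v:v\in S\}$ are independent and independent of everything in $B'$, so $\mathbb{P}_p(\tau\le s^*\mid E_1)\ge 1-(1-\alpha^*)^{m^*}>1-\tfrac{\epsilon^*}{5}$ by the choice of $m^*$, whence $\mathbb{P}_p(\tau\in[0,s^*])\ge\mathbb{P}_p(E_1)\,\mathbb{P}_p(\tau\le s^*\mid E_1)>\big(1-\tfrac{\epsilon^*}{5}\big)^2$. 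The main obstacle, I expect, is the geometric bookkeeping in the finger-construction step: keeping every constructed path inside the stipulated box, checking that the $\beta$-backbend property genuinely survives the junction with the oriented finger (this is where the bound $\beta_h\le 2r^*$ is essential), and laying out the finger events so that they are genuinely jointly independent — in short, getting the box geometry and the coordinate relabelling exactly right.
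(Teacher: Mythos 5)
Your argument is essentially the same as the paper's. The paper packages the same idea into two events: $A_1$ (the translated face cluster contains an $L^\infty$-separated set of size $\geq m^*$) and $A_2$ (one of those face vertices sprouts an oriented ``finger'' reaching a translated copy of the slab that contains a $D^*+z$), shows $A_2 \subseteq \{\tau \in [0,s^*]\}$, and estimates $\mathbb{P}_p(\tau \in [0,s^*]) \geq \mathbb{P}_p(A_2\mid A_1)\,\mathbb{P}_p(A_1 \mid \text{Lemma A.1 event})\,\mathbb{P}_p(\text{Lemma A.1 event})$, where the first and third factors each exceed $1-\epsilon^*/5$ and the middle one equals $1$. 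Your $E_1$/$\{A_v\}$ decomposition is a transcription of the same chain; the geometric checks you carry out explicitly (the octant choice pushing the face into $[-l^*,l^*]^{d-2}\times l^*\times[0,t^*]$, the record bound $h\leq v_d+2r^*$ from $\hat{k}$-cyclicity and $2r^*\geq\max_i\beta_i$ so that concatenation stays below height $z_d$, the availability of a $z_d\in 2\hat{k}\mathbb{N}$ in an interval of length $2\hat{k}$, and the edge-disjointness of the finger boxes) are precisely what the paper is invoking implicitly when it appeals to Remark A.1 and ``the construction of the $\beta$-backbend percolation process.''

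One small slip in your write-up: you define $E_1$ in terms of $C^{\beta}_{B([-2l^*,2l^*]^{d-2}\times[-l^*,l^*+2r^*]\times[0,s^*])}(D^*+x)$, i.e., the cluster in the large $\tau$-box, but then assert that $E_1$ and the selected set $S$ are measurable with respect to the edges of $B'$ only, which is what the independence of $\{A_v\}_{v\in S}$ from $E_1$ rests on. Those two statements are incompatible as written. You should define $E_1$ (and $S$) using the cluster $C^{\beta}_{B'}(D^*+x)$, which is what Lemma \ref{lemma enough vertices case 1}(ii) actually supplies and is genuinely $B'$-measurable; the larger box enters only later, as the ambient region in which the concatenated path must live, and there monotonicity works in your favour. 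This is a notational correction rather than a gap in the idea.
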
 
	
	\begin{claimproof}[\textbf{Proof of Claim \ref{claim tau is less than S probability}.}]
		Choose $v \in \{-1,1\}^{d-2}$ such that $x_iv_i \leq 0$ for all $i = 1, \ldots, d-2$.\footnote{Note that there may be more than one such $v$.} Define the following events:
		\begin{equation*}
			\begin{aligned}
				& A_1 := \left\{ \omega \in \Omega : 
				\begin{aligned}
					& \hspace{1 mm} \exists \hspace{1 mm} S_1 \subseteq \Big( C^{\beta}_{\big(B \big([-l^*, l^*]^{d-1} \times [0, t^*] \big) + x \big)}(D^* + x) \cap \big( F^{v}_{(d-1)^{+}}(l^*,t^*) + x \big) \Big) \mbox{ with } \vert S_1 \vert \geq m^*\\
					& \mbox{ such that for all distinct } y_1, y_2 \in S_1, L^{\infty}(y_1,y_2) \geq 4r^* + 4\hat{k} + 1 
				\end{aligned}
				\right\},\\
				& A_2 := \left\{ \omega \in \Omega : 
				\begin{aligned}
					& \hspace{1 mm} \exists \hspace{1 mm} y \in \Big( C^{\beta}_{\big(B \big([-l^*, l^*]^{d-1} \times [0, t^*] \big) + x \big)}(D^* + x) \cap \big( F^{v}_{(d-1)^{+}}(l^*,t^*) + x \big) \Big) \mbox{ such that}\\
					& \Big(B \big([-r^*, r^*]^{d-2} \times [0, 2r^*] \times [2r^*, 2r^* + 2\hat{k}] \big) + y \Big) \subseteq C^{0}_{\big(B \big([-r^*, r^*]^{d-2} \times [0, 2r^*] \times [0, 2r^* + 2\hat{k}] \big) + y \big)} (\{y\}) 
				\end{aligned}
				\right\}.
			\end{aligned}
		\end{equation*}
		By the construction of $v$, $\big( F^{v}_{(d-1)^{+}}(l^*,t^*) + x \big) \subseteq B \big([-l^*, l^*]^{d-2} \times l^* \times [0, t^*] \big)$. Since $2r^* \geq \max \{\beta_0, \ldots, \beta_{\hat{k}-1}\}$, $s^* = t^* + 2r^* + 2\hat{k}$, and $\big( F^{v}_{(d-1)^{+}}(l^*,t^*) + x \big) \subseteq B \big([-l^*, l^*]^{d-2} \times l^* \times [0, t^*] \big)$, it follows from the construction of $A_2$ that $A_2 \subseteq \big\{ \omega \in \Omega : \tau(\omega) \in [0,s^*] \big\}$. So, we have
		\begin{equation}\label{equation large probability for tau}
			\begin{aligned}
				\mathbb{P}_{p} \Big( \tau \in [0,s^*] \Big) \geq \hspace{1 mm} & \mathbb{P}_{p} \Big( A_2 \Big)\\
				\geq \hspace{1 mm} & \hspace{1 mm} \mathbb{P}_{p} \Big( A_2 \mid A_1 \Big) \times \mathbb{P}_{p} \Big( A_1 \Big)\\
				\geq \hspace{1 mm} & \hspace{1 mm} \mathbb{P}_{p} \Big( A_2 \mid A_1 \Big)\\
				& \times \mathbb{P}_{p} \bigg( A_1 \hspace{2 mm} \biggm| \hspace{2 mm} \Big| \Big( C^{\beta}_{\big(B \big([-l^*, l^*]^{d-1} \times [0, t^*] \big) + x \big)}(D^* + x) \cap \big( F^{v}_{(d-1)^{+}}(l^*,t^*) + x \big) \Big) \Big| \geq n^* \bigg)\\
				& \times \mathbb{P}_{p} \bigg( \Big| \Big( C^{\beta}_{\big(B \big([-l^*, l^*]^{d-1} \times [0, t^*] \big) + x \big)}(D^* + x) \cap \big( F^{v}_{(d-1)^{+}}(l^*,t^*) + x \big) \Big) \Big| \geq n^* \bigg).
			\end{aligned}
		\end{equation}
		Define the mapping $S_1 : A_1 \longrightarrow \mathcal{P}\big( \mathbb{H} \big)$ such that for all $\omega \in A_1$,
		\begin{enumerate}[(i)]
			\item $S_1(\omega) \subseteq \Big( C^{\beta}_{\big(B \big([-l^*, l^*]^{d-1} \times [0, t^*] \big) + x \big)}(D^* + x) \cap \big( F^{v}_{(d-1)^{+}}(l^*,t^*) + x \big) \Big)$, and
			
			\item $\vert S_1(\omega) \vert \geq m^*$, where $L^{\infty}(y_1,y_2) \geq 4r^* + 4\hat{k} + 1$ for all distinct $y_1,y_2 \in S_1(\omega)$.
		\end{enumerate}
		By the definition of $m^*$, $A_1$, $S_1$ and $A_2$, along with Remark \ref{remark probability inequality} and the construction of the $\beta$-backbend percolation process, we have
		\begin{equation}\label{equation large probability for tau cal}
			\begin{aligned}
				& \mathbb{P}_{p} \Big( A_2 \mid A_1 \Big)
				\\
				\geq \hspace{1 mm} & \mathbb{P}_{p} \left( 
				\begin{aligned}
					& \exists \hspace{1 mm} y \in S_1 \mbox{ such that }\\
					& \Big(B \big([-r^*, r^*]^{d-2} \times [0, 2r^*] \times [2r^*, 2r^* + 2\hat{k}] \big) + y \Big) \subseteq C^{0}_{\big(B \big([-r^*, r^*]^{d-2} \times [0, 2r^*] \times [0, 2r^* + 2\hat{k}] \big) + y \big)} (\{y\})
				\end{aligned}
				\hspace{3.5 mm} \vrule \hspace{3.5 mm} A_1 \right)\\
				> \hspace{1 mm} & 1 - \frac{\epsilon^*}{5}.
			\end{aligned}
		\end{equation}
		By the definition of $A_1$ and $n^*$,
		\begin{equation}\label{equation new 1}
			\mathbb{P}_{p} \bigg( A_1 \hspace{2 mm} \biggm| \hspace{2 mm} \Big| \Big( C^{\beta}_{\big(B \big([-l^*, l^*]^{d-1} \times [0, t^*] \big) + x \big)}(D^* + x) \cap \big( F^{v}_{(d-1)^{+}}(l^*,t^*) + x \big) \Big) \Big| \geq n^* \bigg) = 1,
		\end{equation}
	and by Lemma \ref{lemma enough vertices case 1} and the construction of the $\beta$-backbend percolation process,
	\begin{equation*}
		\mathbb{P}_{p} \bigg( \Big| \Big( C^{\beta}_{\big(B \big([-l^*, l^*]^{d-1} \times [0, t^*] \big) + x \big)}(D^* + x) \cap \big( F^{v}_{(d-1)^{+}}(l^*,t^*) + x \big) \Big) \Big| \geq n^* \bigg) > 1 - \frac{\epsilon^*}{5}.
	\end{equation*}
	This, together with \eqref{equation large probability for tau}, \eqref{equation large probability for tau cal}, and \eqref{equation new 1}, completes the proof of Claim \ref{claim tau is less than S probability}.
	\end{claimproof}

		Define the mapping $\zeta : \Omega \longrightarrow B \big([-l^*, l^*]^{d-2} \times (l^* + r^*) \times 2\hat{k}\mathbb{N} \big) \cup \{\infty\}$ such that for all $\omega \in \Omega$,
		\begin{equation*}
			\zeta(\omega) := \begin{cases}	
				z & \mbox{ if } \tau(\omega) < \infty \mbox{ and }\\
				& (D^* + z) \subseteq C^{\beta}_{B \big([-2l^*, 2l^*]^{d-2} \times [-l^*, l^* + 2r^*] \times [0, z_d] \big)}(D^* + x) \mbox{ with } z_d = \tau(\omega);\vspace{9pt}\\
				\infty & \mbox{ if } \tau(\omega) = \infty
			\end{cases}
		\end{equation*}

		\begin{claim}\label{claim translation probability}
			\begin{equation*}
				\mathbb{P}_{p} \left( 
				\begin{aligned}
					& \big( D^* + z \big) \subseteq C^\beta_{\big( B \big([-l^*, l^*]^{d-1} \times [0, s^*] \big) + \zeta \big)} \big( D^* + \zeta \big)\\
					& \mbox{for some } z \in B \big([-l^*, l^*]^{d-2} \times [l^*, 2l^*] \times (\tau + s^*) \big)
				\end{aligned}
				\hspace{3.5 mm} \vrule \hspace{3.5 mm} \tau \in [0, s^*] \right) > \Big( 1 - \frac{\epsilon^*}{5} \Big)^2.
			\end{equation*}
		\end{claim}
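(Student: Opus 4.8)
The plan is to prove Claim~\ref{claim translation probability} by restarting the percolation process at the (random) level $\tau$ and then re-running, from the landing block $D^*+\zeta$, essentially the argument already used for Claim~\ref{claim tau is less than S probability}.

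First I would isolate the independence structure. The pair $(\tau,\zeta)$ is measurable with respect to the states of the edges contained in $B\big([-2l^*,2l^*]^{d-2}\times[-l^*,l^*+2r^*]\times[0,\tau]\big)$ --- the region inside which the definition of $\tau$ witnesses the connection from $D^*+x$ --- whereas the event in the statement of the claim depends only on edges joining pairs of vertices of height at least $\tau=\zeta_d$; since in the BCC lattice every edge changes the last coordinate, these two sets of edges are disjoint. Hence, by the standard finite-exploration argument, conditioning on $\{\tau\le s^*\}$ and decomposing over the value $z_0$ of $\zeta$,
\[
\mathbb{P}_{p}\big(\,\text{the event of Claim~\ref{claim translation probability}}\,\bigm|\,\zeta=z_0\,\big)\;=\;\mathbb{P}_{p}\big(E(z_0)\big),
\]
where $E(z_0)$ is the translate (started from $D^*+z_0$, aimed at the top level $z_{0,d}+s^*$) of the event in the claim. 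It therefore suffices to show $\mathbb{P}_{p}\big(E(z_0)\big)>\big(1-\tfrac{\epsilon^*}{5}\big)^2$ uniformly over admissible $z_0\in B\big([-l^*,l^*]^{d-2}\times(l^*+r^*)\times 2\hat k\mathbb{N}\big)$ with $z_{0,d}\le s^*$.

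To bound $\mathbb{P}_{p}\big(E(z_0)\big)$ I would repeat the two-step scheme of the proof of Claim~\ref{claim tau is less than S probability}, now invoking part (i) of Lemma~\ref{lemma enough vertices case 1} (the top face $T^u$) in place of part (ii). Choose the orthant $u$ so that $T^u(l^*,t^*)+z_0$ lies on the side of the top face of the intermediate box $B\big([-l^*,l^*]^{d-1}\times[0,t^*]\big)+z_0$ nearest the target slab (this amounts to picking $u_1,\dots,u_{d-2}$ from the signs of $z_{0,1},\dots,z_{0,d-2}$ and $u_{d-1}$ according to where $l^*+r^*$ sits). By Lemma~\ref{lemma enough vertices case 1}(i) and the construction of the $\beta$-backbend process, with probability exceeding $1-\tfrac{\epsilon^*}{5}$ the cluster of $D^*+z_0$ meets $T^u(l^*,t^*)+z_0$ in at least $n^*$ vertices; since $n^*\ge m^*(8r^*+8\hat k+1)^d$, any such set contains $m^*$ vertices pairwise $L^\infty$-separated by at least $4r^*+4\hat k+1$. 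From each of these stepping-stone vertices there is an independent oriented ($C^0$) ``push up and over'' connection to the full block $D^*$ placed $2r^*+2\hat k$ levels higher --- that is, at level $z_{0,d}+t^*+2r^*+2\hat k=z_{0,d}+s^*$ --- of the kind whose probability is bounded below by $\alpha^*$; by the choice of $m^*$ at least one of these occurs with probability exceeding $1-\tfrac{\epsilon^*}{5}$. A successful such event exhibits the required $z\in B\big([-l^*,l^*]^{d-2}\times[l^*,2l^*]\times(z_{0,d}+s^*)\big)$ with $D^*+z$ in the cluster and with the whole connecting path inside $B\big([-l^*,l^*]^{d-1}\times[0,s^*]\big)+z_0$; here one uses $l^*\ge r^*$ (indeed $l^*$ large relative to $r^*$) together with $r^*,t^*\in 2\hat k\mathbb{N}$ and $s^*=t^*+2r^*+2\hat k$. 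Combining the two $\big(>1-\tfrac{\epsilon^*}{5}\big)$ bounds via Remark~\ref{remark probability inequality} and FKG yields $\mathbb{P}_{p}\big(E(z_0)\big)>\big(1-\tfrac{\epsilon^*}{5}\big)^2$.

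The main obstacle is the geometric bookkeeping in the last step: checking that, for every admissible position of $z_0$, the orthant $u$ and the ``push up and over'' $C^0$-block can be chosen so that the block simultaneously lands inside the target slab $B\big([-l^*,l^*]^{d-2}\times[l^*,2l^*]\times\cdot\big)$ and never exits $B\big([-l^*,l^*]^{d-1}\times[0,s^*]\big)+z_0$. All the probabilistic inputs --- the restart/independence at level $\tau$, Lemma~\ref{lemma enough vertices case 1}, the $m^*$--$\alpha^*$ counting, Remark~\ref{remark probability inequality}, FKG --- are already available; only this index-chasing needs care, and it is precisely where the hypothesis $l^*\ge r^*$, the $2\hat k$-periodicity of $\beta$, and the choices of $r^*,t^*,s^*$ as multiples of $2\hat k$ are used.
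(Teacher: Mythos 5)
Your high-level scheme matches the paper's: restart the process at the random level $\tau$ using independence of edge sets at disjoint levels, invoke Lemma \ref{lemma enough vertices case 1}(i) (the $T^u$ top face) instead of part (ii), extract $m^*$ well-separated stepping stones from the $n^*$ landing vertices, push each one up $2r^*+2\hat{k}$ levels with an $\alpha^*$-type oriented block, and multiply the two $\big(1-\tfrac{\epsilon^*}{5}\big)$ bounds. That part is right, and it is what the paper does (though the paper does not need FKG in this claim --- Remark \ref{remark probability inequality} together with independence of the disjoint edge sets suffices for the conditional bounds).

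However, the ``geometric bookkeeping'' you defer to at the end is not mere index-chasing; it requires a genuine case split that your proposal omits. With $u_{d-1}=1$ and $\zeta_{d-1}=l^*+r^*$, the stepping stones on $T^u(l^*,t^*)+\zeta$ span $y_{d-1}\in[l^*+r^*,\,2l^*+r^*]$, whereas the target slab is $[l^*,2l^*]$: the upper $r^*$ of that range overshoots the target. A single, uniformly-oriented $C^0$-push block cannot serve all of them. Pushing straight up works only for $y_{d-1}\le 2l^*$ (else the landed $D^*$ overshoots), and shifting every stepping stone laterally by $-r^*$ works only if the shifted box stays inside the containment box $B\big([-l^*,l^*]^{d-1}\times[0,s^*]\big)+\zeta$, which forces $y_{d-1}-2r^*\ge r^*$; since the lemma only guarantees $l^*\ge r^*$ (not $l^*\ge 2r^*$), this fails for the stepping stones near $y_{d-1}=l^*+r^*$. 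The paper resolves this by splitting $S_2$ into $\hat S_2=\{y:y_{d-1}>2l^*\}$ and $\bar S_2=\{y:y_{d-1}\le 2l^*\}$ and using two different $C^0$-push boxes --- a laterally-shifted one for $\hat S_2$ (event $\hat B_1$) and a straight-up one for $\bar S_2$ (event $\bar B_1$) --- then applying the $m^*$-count to $\hat B_1\cup\bar B_1$ via Remark \ref{remark probability inequality}. You would need to add this dichotomy; as written, your proposal asserts the existence of a single suitable push block, which does not exist for all admissible positions.
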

	
	\begin{claimproof}[\textbf{Proof of Claim \ref{claim translation probability}.}]
		Define the mapping $u : \Omega \longrightarrow \{-1, 1\}^{d-1} \cup \{\infty\}$ as follows.
		\begin{enumerate}[(i)]
			\item For all $\omega \in \Omega$ with $\zeta(\omega) \in \mathbb{H}$, $u_{d-1}(\omega) = 1$ and $\zeta_i(\omega)u_i(\omega) \leq 0$ for all $i = 1, \ldots, d-2$.
			
			\item For all $\omega \in \Omega$ with $\zeta(\omega) = \infty$, $u(\omega) = \infty$.
		\end{enumerate}
		Since $\beta$ is $\hat{k}$-cyclic, it follows from the definition of $\tau$ and $\zeta$, the construction of the $\beta$-backbend percolation process, Lemma \ref{lemma enough vertices case 1}, and Remark \ref{remark probability inequality} that
		\begin{equation}\label{equation claim 2 equation 1}
			\mathbb{P}_{p} \bigg( \Big| \Big( C^{\beta}_{\big(B \big([-l^*, l^*]^{d-1} \times [0, t^*] \big) + \zeta \big)}(D^* + \zeta) \cap \big( T^{u}(l^*,t^*) + \zeta \big) \Big) \Big| \geq n^* \hspace{2 mm} \biggm| \hspace{2 mm} \tau \in [0, s^*] \bigg) > 1 - \frac{\epsilon^*}{5}.
		\end{equation}
		Define the following event:
		\begin{equation*}
			B_1 := \left\{ \omega \in \Omega : 
			\begin{aligned}
				& \hspace{1 mm} \exists \hspace{1 mm} S_2 \subseteq \Big( C^{\beta}_{\big(B \big([-l^*, l^*]^{d-1} \times [0, t^*] \big) + \zeta \big)}(D^* + \zeta) \cap \big( T^{u}(l^*,t^*) + \zeta \big) \Big) \mbox{ with } \vert S_2 \vert \geq m^*\\
				& \mbox{ such that for all distinct } y_1, y_2 \in S_2, L^{\infty}(y_1,y_2) \geq 4r^* + 4\hat{k} + 1
			\end{aligned}
			\right\}.
		\end{equation*}
		Now, we have
		\begin{equation}\label{equation claim 2 equation 2}
			\begin{aligned}
				& \mathbb{P}_{p} \Big( B_1 \hspace{2 mm} \Bigm| \hspace{2 mm} \tau \in [0, s^*] \Big)\\
				\geq \hspace{1 mm} & \mathbb{P}_{p} \Bigg( B_1 \hspace{2 mm} \biggm| \hspace{2 mm} \tau \in [0, s^*] \mbox{ and } \Big| \Big( C^{\beta}_{\big(B \big([-l^*, l^*]^{d-1} \times [0, t^*] \big) + \zeta \big)}(D^* + \zeta) \cap \big( T^{u}(l^*,t^*) + \zeta \big) \Big) \Big| \geq n^* \Bigg)\\
				& \times \mathbb{P}_{p} \bigg( \Big| \Big( C^{\beta}_{\big(B \big([-l^*, l^*]^{d-1} \times [0, t^*] \big) + \zeta \big)}(D^* + \zeta) \cap \big( T^{u}(l^*,t^*) + \zeta \big) \Big) \Big| \geq n^* \hspace{2 mm} \biggm| \hspace{2 mm} \tau \in [0, s^*] \bigg).
			\end{aligned}
		\end{equation}
		By the definition of $B_1$ and $n^*$, together with Remark \ref{remark probability inequality}, we have 
		\begin{equation*}
			\mathbb{P}_{p} \Bigg( B_1 \hspace{2 mm} \biggm| \hspace{2 mm} \tau \in [0, s^*] \mbox{ and } \Big| \Big( C^{\beta}_{\big(B \big([-l^*, l^*]^{d-1} \times [0, t^*] \big) + \zeta \big)}(D^* + \zeta) \cap \big( T^{u}(l^*,t^*) + \zeta \big) \Big) \Big| \geq n^* \Bigg) = 1.
		\end{equation*}
	This, together with \eqref{equation claim 2 equation 1} and \eqref{equation claim 2 equation 2}, implies 
		\begin{equation}\label{equation claim 2 equation 3}
			\mathbb{P}_{p} \Big( B_1 \hspace{2 mm} \Bigm| \hspace{2 mm} \tau \in [0, s^*] \Big) > 1 - \frac{\epsilon^*}{5}.
		\end{equation}
		
		Define the mapping $S_2 : B_1 \longrightarrow \mathcal{P}\big( \mathbb{H} \big)$ such that for all $\omega \in B_1$,
		\begin{enumerate}[(i)]
			\item $S_2(\omega) \subseteq \Big( C^{\beta}_{\big(B \big([-l^*, l^*]^{d-1} \times [0, t^*] \big) + \zeta \big)}(D^* + \zeta) \cap \big( T^{u}(l^*,t^*) + \zeta \big) \Big)$, and
			
			\item $\vert S_2(\omega) \vert \geq m^*$, where $L^{\infty}(y_1,y_2) \geq 4r^* + 4\hat{k} + 1$ for all distinct $y_1, y_2 \in S_2(\omega)$.
		\end{enumerate}
		By the definition of $\tau$, $\zeta$, $u$, and $B_1$, we have $\big( T^{u}(l^*,t^*) + \zeta \big) \subseteq B \big([-l^*, l^*]^{d-2} \times [l^* + r^*, 2l^* + r^*] \times (\tau + t^*) \big)$ for all $\omega \in B_1$. Define the mappings $\hat{S}_2, \bar{S}_2: B_1 \longrightarrow \mathcal{P}\big( \mathbb{H} \big)$ such that for all $\omega \in B_1$, $\hat{S}_2(\omega) := \{y \in S_2(\omega) : y_{d-1} > 2l^*\}$, and $\bar{S}_2(\omega) := \{y \in S_2(\omega) : y_{d-1} \leq 2l^*\}$. Define the following sub-events of $B_1$:
		\begin{equation*}
			\begin{aligned}
				& \hat{B}_1 := \left\{ \omega \in B_1 : 
				\begin{aligned}
					& \hspace{1 mm} \exists \hspace{1 mm} y \in \hat{S}_2(\omega) \mbox{ such that}\\
					& \hspace{1 mm} \Big(D^* + y + (0, \ldots, 0, -r^* , 2r^* + 2\hat{k}) \Big) \subseteq C^{0}_{\big(B \big([-r^*, r^*]^{d-2} \times [-2r^*, 0] \times [0, 2r^* + 2\hat{k}] \big) + y \big)} (\{y\})
				\end{aligned}
				\right\}, \mbox{and}\\
				& \bar{B}_1 := \left\{ \omega \in B_1 : 
				\begin{aligned}
					& \hspace{1 mm} \exists \hspace{1 mm} y \in \bar{S}_2(\omega) \mbox{ such that}\\
					& \hspace{1 mm} \Big(D^* + y + (0, \ldots, 0, 2r^* + 2\hat{k}) \Big) \subseteq C^{0}_{\big(B \big([-r^*, r^*]^{d-1} \times [0, 2r^* + 2\hat{k}] \big) + y \big)} (\{y\})
				\end{aligned}
				\right\}.
			\end{aligned}
		\end{equation*}
		By the definition of $m^*$, $B_1$, $\hat{B}_1$, and $\bar{B}_1$, along with Remark \ref{remark probability inequality} and the construction of the $\beta$-backbend percolation process, we have
		\begin{equation}\label{equation claim 2 equation 4}
			\mathbb{P}_{p} \Big( \hat{B}_1 \cup \bar{B}_1 \hspace{2 mm} \Bigm| \hspace{2 mm} B_1 \mbox{ and } \tau \in [0, s^*] \Big) > 1 - \frac{\epsilon^*}{5}.
		\end{equation}
		Since $l^* \geq r^*$ (see Lemma \ref{lemma enough vertices case 1} for details) and $s^* = t^* + 2r^* + 2\hat{k}$, we have
		\begin{equation*}
			\begin{aligned}
				& \hspace{1 mm} \mathbb{P}_{p} \left( 
				\begin{aligned}
					& \big( D^* + z \big) \subseteq C^\beta_{\big( B \big([-l^*, l^*]^{d-1} \times [0, s^*] \big) + \zeta \big)} \big( D^* + \zeta \big)\\
					& \mbox{for some } z \in B \big([-l^*, l^*]^{d-2} \times [l^*, 2l^*] \times (\tau + s^*) \big)
				\end{aligned}
				\hspace{3.5 mm} \vrule \hspace{3.5 mm} \tau \in [0, s^*] \right)\\
				\geq \hspace{1 mm} & \hspace{1 mm} \mathbb{P}_{p} \left( 
				\begin{aligned}
					& \big( D^* + z \big) \subseteq C^\beta_{\big( B \big([-l^*, l^*]^{d-1} \times [0, s^*] \big) + \zeta \big)} \big( D^* + \zeta \big)\\
					& \mbox{for some } z \in B \big([-l^*, l^*]^{d-2} \times [l^*, 2l^*] \times (\tau + s^*) \big)
				\end{aligned}
				\hspace{3.5 mm} \vrule \hspace{3.5 mm} B_1 \mbox{ and } \tau \in [0, s^*] \right) \times \mathbb{P}_{p} \Big( B_1 \hspace{2 mm} \bigm| \hspace{2 mm} \tau \in [0, s^*] \Big)\\
				\geq \hspace{1 mm} & \hspace{1 mm} \mathbb{P}_{p} \bigg( \hat{B}_1 \cup \bar{B}_1 \hspace{2 mm} \biggm| \hspace{2 mm} B_1 \mbox{ and } \tau \in [0, s^*] \bigg) \times \mathbb{P}_{p} \Big( B_1 \hspace{2 mm} \bigm| \hspace{2 mm} \tau \in [0, s^*] \Big).
			\end{aligned}
		\end{equation*}
		This, together with \eqref{equation claim 2 equation 3} and \eqref{equation claim 2 equation 4}, completes the proof of Claim \ref{claim translation probability}.
	\end{claimproof}

		By $A_x$, we denote the event $\bigg\{ \omega \in \Omega : \big( D^* + z \big) \subseteq C^{\beta}_{B \big([-2l^*, 2l^*]^{d-2} \times [-l^*, 3l^*] \times [0, z_d] \big)}(D^* + x) \mbox{ for some } z \in B \big([-l^*, l^*]^{d-2} \times [l^*, 2l^*] \times [s^*, 2s^*] \big) \mbox{ with } z_d \in 2\hat{k}\mathbb{N} \bigg\}$. Since $l^* \geq r^*$, by the definition of $\tau$, $\zeta$, and $A_x$, the construction of the lattice structure, and the construction of the $\beta$-backbend percolation process, we have
		\begin{equation}\label{equation lemma 3 final equation}
			\begin{aligned}
				& \hspace{1 mm} \mathbb{P}_{p} \big( A_x \big)\\
				\geq \hspace{1 mm} & \hspace{1 mm} \mathbb{P}_{p} \Big( A_x \hspace{2 mm} \Bigm| \hspace{2 mm} \tau \in [0,s^*] \Big) \times \mathbb{P}_{p} \Big( \tau \in [0,s^*] \Big)\\
				\geq \hspace{1 mm} & \hspace{1 mm} \mathbb{P}_{p} \left( 
				\begin{aligned}
					& \big( D^* + z \big) \subseteq C^\beta_{\big( B \big([-l^*, l^*]^{d-1} \times [0, s^*] \big) + \zeta \big)} \big( D^* + \zeta \big)\\
					& \mbox{for some } z \in B \big([-l^*, l^*]^{d-2} \times [l^*, 2l^*] \times (\tau + s^*) \big)
				\end{aligned}
				\hspace{3.5 mm} \vrule \hspace{3.5 mm} \tau \in [0, s^*] \right) \times \mathbb{P}_{p} \Big( \tau \in [0,s^*] \Big)\\
				> \hspace{1 mm} & \hspace{1 mm} \Big( 1 - \frac{\epsilon^*}{5} \Big)^4 \hspace{10 mm} \mbox{(by Claim \ref{claim tau is less than S probability} and Claim \ref{claim translation probability})}\\
				> \hspace{1 mm} & \hspace{1 mm} 1 - \epsilon^*.
			\end{aligned}	
		\end{equation}
		
		Note that the event $A_x$ depends only on the edges in the set $\Big( B \big([-2l^*, 2l^*]^{d-2} \times [-l^*, 3l^*] \times [0, 2s^*] \big) \Big)^2$. Since this set is finite, it follows that $\mathbb{P}_{p} ( A_x )$ is a continuous function of $p$. Moreover, by construction,  $\mathbb{P}_{p} ( A_x )$ is an increasing function of $p$. Since $\mathbb{P}_{p} ( A_x )$ is an increasing continuous function of $p$, \eqref{equation lemma 3 final equation} implies there exists $\delta_x > 0$ with $\delta_x < p$ such that $\mathbb{P}_{p-\delta_x} ( A_x ) > 1 - \epsilon^*$. 
		
		Let 
		\begin{equation*}
			\delta := \min \hspace{1 mm} \Big\{\delta_x : x \in B \big([-l^*,l^*]^{d-2} \times 0 \times 0 \big) \Big\}.
		\end{equation*}
		Since $B \big([-l^*,l^*]^{d-2} \times 0 \times 0 \big)$ is a finite set of vertices, by the construction of $\delta$, we have $0 < \delta < p$. Furthermore, for all $x \in B \big([-l^*,l^*]^{d-2} \times 0 \times 0 \big)$, since $\mathbb{P}_{p} ( A_x )$ is an increasing continuous function of $p$ and $\mathbb{P}_{p-\delta_x} ( A_x ) > 1 - \epsilon^*$, it follows from the construction of $\delta$ that $\mathbb{P}_{p-\delta} ( A_x ) > 1 - \epsilon^*$ for all $x \in B \big([-l^*,l^*]^{d-2} \times 0 \times 0 \big)$. This completes the proof of Lemma \ref{lemma coupling case 1}.
		\hfill $\qed$

		\subsubsection{Proof of Lemma \ref{lemma transition case 1}}\label{appendix Proof of lemma transition case 1}

		Since $\beta$ is $\hat{k}$-cyclic, Lemma \ref{lemma coupling case 1}, together with the construction of the $\beta$-backbend percolation process, implies that for all $y \in B \big([-l^*,l^*]^{d-2} \times \mathbb{Z} \times \mathbb{Z}_+ \big)$ with $y_d \in 2\hat{k}\mathbb{Z}_+$,
		\begin{subequations}\label{equation transition equation for all}
			\begin{equation}\label{equation transition equation for all 1}
				\mathbb{P}_{p-\delta^*} 
				\left(
				\begin{aligned}
					& \big( D^* + z \big) \subseteq C^{\beta}_{B \big([-2l^*, 2l^*]^{d-2} \times [-l^* + y_{d-1}, 3l^* + y_{d-1}] \times [y_d, z_d] \big)}(D^* + y)\\
					& \mbox{ for some } z \in B \big([-l^*, l^*]^{d-2} \times [l^* + y_{d-1}, 2l^* + y_{d-1}] \times [s^* + y_d, 2s^* + y_d] \big)\\
					& \mbox{ with } z_d \in 2\hat{k}\mathbb{N} 
				\end{aligned}
				\right) > 1 - \epsilon^*,
			\end{equation}
			\begin{equation}\label{equation transition equation for all 2}
				\mathbb{P}_{p-\delta^*} 
				\left(
				\begin{aligned}
					& \big( D^* + z \big) \subseteq C^{\beta}_{B \big([-2l^*, 2l^*]^{d-2} \times [-3l^* + y_{d-1}, l^* + y_{d-1}] \times [y_d, z_d] \big)}(D^* + y)\\
					& \mbox{ for some } z \in B \big([-l^*, l^*]^{d-2} \times [-2l^* + y_{d-1}, -l^* + y_{d-1}] \times [s^* + y_d, 2s^* + y_d] \big)\\
					& \mbox{ with } z_d \in 2\hat{k}\mathbb{N} 
				\end{aligned}
				\right) > 1 - \epsilon^*.
			\end{equation}
		\end{subequations}
		Fix an arbitrary $x \in B \big([-l^*,l^*]^{d-2} \times [-2l^*, 2l^*] \times [0, 2s^*] \big)$ with $x_d \in 2\hat{k}\mathbb{Z}_+$. By $A_x$, we denote the event $\Big\{\omega \in \Omega : \big( D^* + z \big) \subseteq C^{\beta}_{B \big([-2l^*, 2l^*]^{d-2} \times [-3l^*, 4l^*] \times [x_d, z_d] \big)}(D^* + x) \mbox{ for some } z \in B \big([-l^*, l^*]^{d-2} \times [-l^*, 3l^*] \times [2s^*, 4s^*] \big) \mbox{ with } z_d \in 2\hat{k}\mathbb{N} \Big\}$. To prove Lemma \ref{lemma transition case 1}, we distinguish the following five cases.\medskip
		\\
		\noindent\textbf{\textsc{Case} 1}: Suppose $x \in B \big([-l^*,l^*]^{d-2} \times [-2l^*, l^*] \times [s^*, 2s^*] \big)$.
		\\
		It follows from \eqref{equation transition equation for all 1} that $\mathbb{P}_{p-\delta^*} \big( A_x \big) > 1 - \epsilon^*$.\medskip
		\\
		\textbf{\textsc{Case} 2}: Suppose $x \in B \big([-l^*,l^*]^{d-2} \times [l^*, 2l^*] \times [s^*, 2s^*] \big)$.
		\\
		It follows from \eqref{equation transition equation for all 2} that $\mathbb{P}_{p-\delta^*} \big( A_x \big) > 1 - \epsilon^*$.\medskip
	\\
	\textbf{\textsc{Case} 3}: Suppose $x \in B \big([-l^*,l^*]^{d-2} \times [-2l^*, 0] \times [0, s^*] \big)$.
		\\
		It follows from \eqref{equation transition equation for all 1} that
		\begin{equation}\label{equation transition case 3 equation 1}
			\mathbb{P}_{p-\delta^*} 
				\left(
				\begin{aligned}
					& \big( D^* + z \big) \subseteq C^{\beta}_{B \big([-2l^*, 2l^*]^{d-2} \times [-3l^*, 3l^*] \times [x_d, z_d] \big)}(D^* + x)\\
					& \mbox{ for some } z \in B \big([-l^*, l^*]^{d-2} \times [-l^*, 2l^*] \times [s^*, 3s^*] \big) \mbox{ with } z_d \in 2\hat{k}\mathbb{N} 
				\end{aligned}
				\right) > 1 - \epsilon^*.
		\end{equation}
		Define the following events:
		\begin{equation*}
			\begin{aligned}
				& A_1 := \left\{ \omega \in \Omega : 
				\begin{aligned}
					& \hspace{1 mm} \big( D^* + z \big) \subseteq C^{\beta}_{B \big([-2l^*, 2l^*]^{d-2} \times [-3l^*, 3l^*] \times [x_d, z_d] \big)}(D^* + x)\\
					& \mbox{ for some } z \in B \big([-l^*, l^*]^{d-2} \times [-l^*, 2l^*] \times [2s^*, 3s^*] \big) \mbox{ with } z_d \in 2\hat{k}\mathbb{N} 
				\end{aligned}
				\right\},\\
				& A_2 := \left\{ \omega \in \Omega : 
				\begin{aligned}
					& \hspace{1 mm} \big( D^* + z \big) \subseteq C^{\beta}_{B \big([-2l^*, 2l^*]^{d-2} \times [-3l^*, 3l^*] \times [x_d, z_d] \big)}(D^* + x)\\
					& \mbox{ for some } z \in B \big([-l^*, l^*]^{d-2} \times [-l^*, l^*] \times [s^*, 2s^*] \big) \mbox{ with } z_d \in 2\hat{k}\mathbb{N} 
				\end{aligned}
				\right\} \setminus A_1, \mbox{ and}\\
				& A_3 := \left\{ \omega \in \Omega : 
				\begin{aligned}
					& \hspace{1 mm} \big( D^* + z \big) \subseteq C^{\beta}_{B \big([-2l^*, 2l^*]^{d-2} \times [-3l^*, 3l^*] \times [x_d, z_d] \big)}(D^* + x)\\
					& \mbox{ for some } z \in B \big([-l^*, l^*]^{d-2} \times [l^*, 2l^*] \times [s^*, 2s^*] \big) \mbox{ with } z_d \in 2\hat{k}\mathbb{N} 
				\end{aligned}
				\right\} \setminus (A_1 \cup A_2).
			\end{aligned}
		\end{equation*}
		Clearly $A_1, A_2$, and $A_3$ are disjoint. By the construction of $A_1, A_2$, and $A_3$, \eqref{equation transition case 3 equation 1} implies $\mathbb{P}_{p-\delta^*} \Big( A_1 \cup A_2 \cup A_3 \Big) > 1 - \epsilon^*$. Since $A_1, A_2$, and $A_3$ are disjoint, $\mathbb{P}_{p-\delta^*} \Big( A_1 \cup A_2 \cup A_3 \Big) > 1 - \epsilon^*$ implies
		\begin{equation}\label{equation transition case 3 equation 2}
			\mathbb{P}_{p-\delta^*} (A_1) + \mathbb{P}_{p-\delta^*} (A_2) + \mathbb{P}_{p-\delta^*} (A_3) > 1 - \epsilon^*.
		\end{equation}
		Define the mappings $z' : A_2 \longrightarrow B \big([-l^*, l^*]^{d-2} \times [-l^*, l^*] \times [s^*, 2s^*] \big)$ and $z'' : A_3 \longrightarrow B \big([-l^*, l^*]^{d-2} \times [l^*, 2l^*] \times [s^*, 2s^*] \big)$ as follows.
		\begin{enumerate}[(i)]
			\item For all $\omega \in A_2$, $\big( D^* + z'(\omega) \big) \subseteq C^{\beta}_{B \big([-2l^*, 2l^*]^{d-2} \times [-3l^*, 3l^*] \times [x_d, z'_d(\omega)] \big)}(D^* + x)$ with $z'_d(\omega) \in 2\hat{k}\mathbb{N}$.
			
			\item For all $\omega \in A_3$, $\big( D^* + z''(\omega) \big) \subseteq C^{\beta}_{B \big([-2l^*, 2l^*]^{d-2} \times [-3l^*, 3l^*] \times [x_d, z''_d(\omega)] \big)}(D^* + x)$ with $z''_d(\omega) \in 2\hat{k}\mathbb{N}$.
		\end{enumerate}
		By the definition of $z'$ and $z''$, along with Remark \ref{remark probability inequality}, \eqref{equation transition equation for all 1}, and \eqref{equation transition equation for all 2}, we have
		\begin{equation}\label{equation transition equation 2 final}
			\begin{aligned}
				& \mathbb{P}_{p-\delta^*} 
				\left(
				\begin{aligned}
					& \big( D^* + z \big) \subseteq C^{\beta}_{B \big([-2l^*, 2l^*]^{d-2} \times [-3l^*, 4l^*] \times [z'_d, z_d] \big)}(D^* + z')\\
					& \mbox{ for some } z \in B \big([-l^*, l^*]^{d-2} \times [-l^*, 3l^*] \times [2s^*, 4s^*] \big) \mbox{ with } z_d \in 2\hat{k}\mathbb{N} 
				\end{aligned}
				\hspace{3.5 mm} \vrule \hspace{3.5 mm} A_2 \right) > 1 - \epsilon^*, \mbox{ and}\\
				& \mathbb{P}_{p-\delta^*} 
				\left(
				\begin{aligned}
					& \big( D^* + z \big) \subseteq C^{\beta}_{B \big([-2l^*, 2l^*]^{d-2} \times [-3l^*, 4l^*] \times [z''_d, z_d] \big)}(D^* + z'')\\
					& \mbox{ for some } z \in B \big([-l^*, l^*]^{d-2} \times [-l^*, 3l^*] \times [2s^*, 4s^*] \big) \mbox{ with } z_d \in 2\hat{k}\mathbb{N} 
				\end{aligned}
				\hspace{3.5 mm} \vrule \hspace{3.5 mm} A_3 \right) > 1 - \epsilon^*.
			\end{aligned}
		\end{equation}
		
		So, we have
		\begin{equation*}
			\begin{aligned}
				& \hspace{1 mm} \mathbb{P}_{p-\delta^*} \big( A_x \big)\\
				\geq \hspace{1 mm} & \hspace{1 mm} \mathbb{P}_{p-\delta^*} \Big( A_x \cap \big( A_1 \cup A_2 \cup A_3 \big) \Big)\\
				= \hspace{1 mm} & \hspace{1 mm} \mathbb{P}_{p-\delta^*} \big( A_x \cap A_1 \big) + \mathbb{P}_{p-\delta^*} \big( A_x \cap A_2 \big) + \mathbb{P}_{p-\delta^*} \big( A_x \cap A_3 \big) \hspace{10 mm} \mbox{(since $A_1, A_2$, and $A_3$ are disjoint)}\\
				= \hspace{1 mm} & \hspace{1 mm} \mathbb{P}_{p-\delta^*} \big( A_1 \big) + \Big( \mathbb{P}_{p-\delta^*} \big( A_x \mid A_2 \big) \times \mathbb{P}_{p-\delta^*} \big( A_2 \big) \Big) + \Big( \mathbb{P}_{p-\delta^*} \big( A_x \mid A_3 \big) \times \mathbb{P}_{p-\delta^*} \big( A_3 \big) \Big) \hspace{10 mm} \mbox{(since $A_1 \subseteq A_x$)}\\
				\geq \hspace{1 mm} & \hspace{1 mm} \mathbb{P}_{p-\delta^*} \big( A_1 \big) +\\
				& \hspace{1 mm} \left[ \mathbb{P}_{p-\delta^*} 
				\left(
				\begin{aligned}
					& \big( D^* + z \big) \subseteq C^{\beta}_{B \big([-2l^*, 2l^*]^{d-2} \times [-3l^*, 4l^*] \times [z'_d, z_d] \big)}(D^* + z')\\
					& \mbox{ for some } z \in B \big([-l^*, l^*]^{d-2} \times [-l^*, 3l^*] \times [2s^*, 4s^*] \big) \mbox{ with } z_d \in 2\hat{k}\mathbb{N} 
				\end{aligned}
				\hspace{3.5 mm} \vrule \hspace{3.5 mm} A_2 \right) \times \mathbb{P}_{p-\delta^*} \big( A_2 \big) \right] +\\
				& \hspace{1 mm} \left[ \mathbb{P}_{p-\delta^*} 
				\left(
				\begin{aligned}
					& \big( D^* + z \big) \subseteq C^{\beta}_{B \big([-2l^*, 2l^*]^{d-2} \times [-3l^*, 4l^*] \times [z''_d, z_d] \big)}(D^* + z'')\\
					& \mbox{ for some } z \in B \big([-l^*, l^*]^{d-2} \times [-l^*, 3l^*] \times [2s^*, 4s^*] \big) \mbox{ with } z_d \in 2\hat{k}\mathbb{N} 
				\end{aligned}
				\hspace{3.5 mm} \vrule \hspace{3.5 mm} A_3 \right) \times \mathbb{P}_{p-\delta^*} \big( A_3 \big) \right]\\
				> \hspace{1 mm} & \hspace{1 mm} \mathbb{P}_{p-\delta^*} \big( A_1 \big) + \Big( \big( 1 - \epsilon^* \big) \times \mathbb{P}_{p-\delta^*} \big( A_2 \big) \Big) + \Big( \big( 1 - \epsilon^* \big) \times \mathbb{P}_{p-\delta^*} \big( A_3 \big) \Big) \hspace{10 mm} \mbox{(by \eqref{equation transition equation 2 final})}\\
				> \hspace{1 mm} & \hspace{1 mm} \big( 1 - \epsilon^* \big) \times \Big( \mathbb{P}_{p-\delta^*} \big( A_1 \big) + \mathbb{P}_{p-\delta^*} \big( A_2 \big) + \mathbb{P}_{p-\delta^*} \big( A_3 \big) \Big)\\
				> \hspace{1 mm} & \hspace{1 mm} \big( 1 - \epsilon^* \big)^2. \hspace{10 mm} \mbox{(by \eqref{equation transition case 3 equation 2})}
			\end{aligned}
		\end{equation*}
		\noindent\textbf{\textsc{Case} 4}: Suppose $x \in B \big([-l^*,l^*]^{d-2} \times [0, l^*] \times [0, s^*] \big)$.
		\\
		It follows from \eqref{equation transition equation for all 1} that
		\begin{equation}\label{equation transition case 4 equation 1}
			\mathbb{P}_{p-\delta^*} 
				\left(
				\begin{aligned}
					& \big( D^* + z \big) \subseteq C^{\beta}_{B \big([-2l^*, 2l^*]^{d-2} \times [-l^*, 4l^*] \times [x_d, z_d] \big)}(D^* + x)\\
					& \mbox{ for some } z \in B \big([-l^*, l^*]^{d-2} \times [l^*, 3l^*] \times [s^*, 3s^*] \big) \mbox{ with } z_d \in 2\hat{k}\mathbb{N} 
				\end{aligned}
				\right)	> 1 - \epsilon^*.
		\end{equation}
		Define the following events:
		\begin{equation*}
			\begin{aligned}
				& A_1 := \left\{ \omega \in \Omega : 
				\begin{aligned}
					& \hspace{1 mm} \big( D^* + z \big) \subseteq C^{\beta}_{B \big([-2l^*, 2l^*]^{d-2} \times [-l^*, 4l^*] \times [x_d, z_d] \big)}(D^* + x)\\
					& \mbox{ for some } z \in B \big([-l^*, l^*]^{d-2} \times [l^*, 3l^*] \times [2s^*, 3s^*] \big) \mbox{ with } z_d \in 2\hat{k}\mathbb{N} 
				\end{aligned}
				\right\}, \mbox{ and}\\
				& A_2 := \left\{ \omega \in \Omega : 
				\begin{aligned}
					& \hspace{1 mm} \big( D^* + z \big) \subseteq C^{\beta}_{B \big([-2l^*, 2l^*]^{d-2} \times [-l^*, 4l^*] \times [x_d, z_d] \big)}(D^* + x)\\
					& \mbox{ for some } z \in B \big([-l^*, l^*]^{d-2} \times [l^*, 3l^*] \times [s^*, 2s^*] \big) \mbox{ with } z_d \in 2\hat{k}\mathbb{N} 
				\end{aligned}
				\right\} \setminus A_1.
			\end{aligned}
		\end{equation*}
		Clearly $A_1$ and $A_2$ are disjoint. By the construction of $A_1$ and $A_2$, \eqref{equation transition case 4 equation 1} implies $\mathbb{P}_{p-\delta^*} \big( A_1 \cup A_2 \big) > 1 - \epsilon^*$. Since $A_1$ and $A_2$ are disjoint, $\mathbb{P}_{p-\delta^*} \big( A_1 \cup A_2 \big) > 1 - \epsilon^*$ implies
		\begin{equation}\label{equation transition case 4 equation 2}
			\mathbb{P}_{p-\delta^*} (A_1) + \mathbb{P}_{p-\delta^*} (A_2) > 1 - \epsilon^*.
		\end{equation}
		Define the mapping $z' : A_2 \longrightarrow B \big([-l^*, l^*]^{d-2} \times [l^*, 3l^*] \times [s^*, 2s^*] \big)$ such that for all $\omega \in A_2$, we have $\big( D^* + z'(\omega) \big) \subseteq C^{\beta}_{B \big([-2l^*, 2l^*]^{d-2} \times [-l^*, 4l^*] \times [x_d, z'_d(\omega)] \big)}(D^* + x)$ with $z'_d(\omega) \in 2\hat{k}\mathbb{N}$. By the definition of $z'$, together with Remark \ref{remark probability inequality} and \eqref{equation transition equation for all 2}, we have
		\begin{equation}\label{equation transition case 4 equation 4}
			\mathbb{P}_{p-\delta^*} 
				\left(
				\begin{aligned}
					& \big( D^* + z \big) \subseteq C^{\beta}_{B \big([-2l^*, 2l^*]^{d-2} \times [-3l^*, 4l^*] \times [z'_d, z_d] \big)}(D^* + z')\\
					& \mbox{ for some } z \in B \big([-l^*, l^*]^{d-2} \times [-l^*, 3l^*] \times [2s^*, 4s^*] \big) \mbox{ with } z_d \in 2\hat{k}\mathbb{N} 
				\end{aligned}
				\hspace{3.5 mm} \vrule \hspace{3.5 mm} A_2 \right) 
				> 1 - \epsilon^*.
		\end{equation}
	Since $A_1 \subseteq A_x$, and $A_1$ and $A_2$ are disjoint, \eqref{equation transition case 4 equation 2} and \eqref{equation transition case 4 equation 4}, along with an argument similar to the one which we use to complete the proof for Case 3 of this lemma from \eqref{equation transition equation 2 final}, together imply $\mathbb{P}_{p-\delta^*} \big( A_x \big) > \big( 1 - \epsilon^* \big)^2$.\medskip
		\\
		\noindent\textbf{\textsc{Case} 5}: Suppose $x \in B \big([-l^*,l^*]^{d-2} \times [l^*, 2l^*] \times [0, s^*] \big)$.
		\\
		It follows from \eqref{equation transition equation for all 2} that
		\begin{equation}\label{equation transition case 5 equation 1}
			\mathbb{P}_{p-\delta^*} 
				\left(
				\begin{aligned}
					& \big( D^* + z \big) \subseteq C^{\beta}_{B \big([-2l^*, 2l^*]^{d-2} \times [-2l^*, 3l^*] \times [x_d, z_d] \big)}(D^* + x)\\
					& \mbox{ for some } z \in B \big([-l^*, l^*]^{d-2} \times [-l^*, l^*] \times [s^*, 3s^*] \big) \mbox{ with } z_d \in 2\hat{k}\mathbb{N} 
				\end{aligned}
				\right) > 1 - \epsilon^*.
		\end{equation}
		Define the following events:
		\begin{equation*}
			\begin{aligned}
				& A_1 := \left\{ \omega \in \Omega : 
				\begin{aligned}
					& \hspace{1 mm} \big( D^* + z \big) \subseteq C^{\beta}_{B \big([-2l^*, 2l^*]^{d-2} \times [-2l^*, 3l^*] \times [x_d, z_d] \big)}(D^* + x)\\
					& \mbox{ for some } z \in B \big([-l^*, l^*]^{d-2} \times [-l^*, l^*] \times [2s^*, 3s^*] \big) \mbox{ with } z_d \in 2\hat{k}\mathbb{N} 
				\end{aligned}
				\right\}, \mbox{ and}\\
				& A_2 := \left\{ \omega \in \Omega : 
				\begin{aligned}
					& \hspace{1 mm} \big( D^* + z \big) \subseteq C^{\beta}_{B \big([-2l^*, 2l^*]^{d-2} \times [-2l^*, 3l^*] \times [x_d, z_d] \big)}(D^* + x)\\
					& \mbox{ for some } z \in B \big([-l^*, l^*]^{d-2} \times [-l^*, l^*] \times [s^*, 2s^*] \big) \mbox{ with } z_d \in 2\hat{k}\mathbb{N} 
				\end{aligned}
				\right\} \setminus A_1.
			\end{aligned}
		\end{equation*}
		Clearly $A_1$ and $A_2$ are disjoint. By the construction of $A_1$ and $A_2$, \eqref{equation transition case 5 equation 1} implies $\mathbb{P}_{p-\delta^*} \big( A_1 \cup A_2 \big) > 1 - \epsilon^*$. Since $A_1$ and $A_2$ are disjoint, $\mathbb{P}_{p-\delta^*} \big( A_1 \cup A_2 \big) > 1 - \epsilon^*$ implies
		\begin{equation}\label{equation transition case 5 equation 2}
			\mathbb{P}_{p-\delta^*} (A_1) + \mathbb{P}_{p-\delta^*} (A_2) > 1 - \epsilon^*.
		\end{equation}
		Define the mapping $z' : A_2 \longrightarrow B \big([-l^*, l^*]^{d-2} \times [-l^*, l^*] \times [s^*, 2s^*] \big)$ such that for all $\omega \in A_2$, we have $\big( D^* + z'(\omega) \big) \subseteq C^{\beta}_{B \big([-2l^*, 2l^*]^{d-2} \times [-2l^*, 3l^*] \times [x_d, z'_d(\omega)] \big)}(D^* + x)$ with $z'_d(\omega) \in 2\hat{k}\mathbb{N}$. By the definition of $z'$, together with Remark \ref{remark probability inequality} and \eqref{equation transition equation for all 1}, we have
		\begin{equation}\label{equation transition equation 2 final new}
			\begin{aligned}
				\mathbb{P}_{p-\delta^*} 
				\left(
				\begin{aligned}
					& \big( D^* + z \big) \subseteq C^{\beta}_{B \big([-2l^*, 2l^*]^{d-2} \times [-3l^*, 4l^*] \times [z'_d, z_d] \big)}(D^* + z')\\
					& \mbox{ for some } z \in B \big([-l^*, l^*]^{d-2} \times [-l^*, 3l^*] \times [2s^*, 4s^*] \big) \mbox{ with } z_d \in 2\hat{k}\mathbb{N} 
				\end{aligned}
				\hspace{3.5 mm} \vrule \hspace{3.5 mm} A_2 \right) > 1 - \epsilon^*.
			\end{aligned}
		\end{equation}
	Since $A_1 \subseteq A_x$, and $A_1$ and $A_2$ are disjoint, \eqref{equation transition case 5 equation 2} and \eqref{equation transition equation 2 final new}, along with an argument similar to the one which we use to complete the proof for Case 3 of this lemma from \eqref{equation transition equation 2 final}, together imply $\mathbb{P}_{p-\delta^*} \big( A_x \big) > \big( 1 - \epsilon^* \big)^2$.\medskip
	\\
	This completes the proof of Lemma \ref{lemma transition case 1}.
	\hfill $\qed$

		\subsubsection{Proof of Lemma \ref{lemma iteration case 1}}\label{appendix Proof of lemma iteration case 1}

		Fix $i \in \mathbb{N}$ and $x \in B \big([-l^*,l^*]^{d-2} \times [-2l^*, 2l^*] \times [0, 2s^*] \big)$ with $x_d \in 2\hat{k}\mathbb{Z}_+$. For all $z \in B^{+}_i$, define
		\begin{equation*}
			\mathcal{S}^{+}(z) := \bigg( \underset{j=0}{\overset{i-1}{\bigcup}} \hspace{1 mm} B \big([-2l^*, 2l^*]^{d-2} \times [jl^* - 3l^*, jl^* + 4l^*] \times [2js^*, 2js^* + 4s^*] \big) \bigg) \cap \mathcal{R}^{+}(z).
		\end{equation*}
		Since $\beta$ is $\hat{k}$-cyclic, using the result in Lemma \ref{lemma transition case 1} repeatedly, it follows that
		\begin{equation}\label{equation for iterative lemma}
			\mathbb{P}_{p-\delta^*} \bigg( \big( D^* + z \big) \subseteq C^{\beta}_{\mathcal{S}^{+}(z)}(D^* + x) \mbox{ for some } z \in B^{+}_i \mbox{ with } z_d \in 2\hat{k}\mathbb{N} \bigg) > (1 - \epsilon^*)^{2i}.
		\end{equation}
		
		By the definition of $\mathcal{S}^{+}(z)$, we have $\mathcal{S}^{+}(z) \subseteq \mathcal{R}^{+}(z)$ for all $z \in B^{+}_i$. This, together with \eqref{equation for iterative lemma}, implies $\mathbb{P}_{p-\delta^*} \big( G_i^{+}(x) \big) > (1 - \epsilon^*)^{2i}$ which completes the proof of Lemma \ref{lemma iteration case 1}.
		\hfill \qed

		\subsubsection{Proof of Lemma \ref{lemma enough vertices case 2}}\label{appendix Proof of lemma enough vertices case 2}

		Using a similar argument as for Claim \ref{claim for part ii} in Lemma \ref{lemma enough vertices case 1}, it follows from \eqref{equation from lemma almost 1 prob case 2} that there exists $l^* \in 2\hat{k}\mathbb{N}$ with $l^* \geq r^*$ such that
		\begin{equation*}
			\mathbb{P}_{p} \bigg( \Big| \Big( C^{\beta}_{B \big([-l^*, l^*]^{d-1} \times [0, t^*] \big)}(D^*) \cap F(l^*, t^*) \Big) \Big| \geq 2(d-1)2^{d-2}n^* \bigg) > 1 - \bigg( \frac{\epsilon^*}{2} \bigg)^{(d-1)2^{d-1}},
		\end{equation*}
		which, in particular, means
		\begin{equation*}
			\mathbb{P}_{p} \bigg( \Big| \Big( C^{\beta}_{B \big([-l^*, l^*]^{d-1} \times [0, t^*] \big)}(D^*) \cap F(l^*, t^*) \Big) \Big| < 2(d-1)2^{d-2}n^* \bigg) < \bigg( \frac{\epsilon^*}{2} \bigg)^{2(d-1)2^{d-2}}.
		\end{equation*}
		By FKG inequality and the construction of the $\beta$-backbend percolation process, this implies that
		\begin{equation*}
			\mathbb{P}_{p} \bigg( \Big| \Big( C^{\beta}_{B \big([-l^*, l^*]^{d-1} \times [0, t^*] \big)}(D^*) \cap F^{v}_{(d-1)^{+}}(l^*, t^*) \Big) \Big| < n^* \bigg) < \frac{\epsilon^*}{2} \mbox{ for all } v \in \{-1, 1\}^{d-2},
		\end{equation*}
	which completes the proof of Lemma \ref{lemma enough vertices case 2}.
		\hfill $\qed$

		\subsubsection{Proof of Lemma \ref{lemma coupling case 2}}\label{appendix Proof of lemma coupling case 2}

		Fix an arbitrary $x \in B \big([-l^*,l^*]^{d-2} \times 0 \times 0 \big)$. By $A_x$, we denote the event $\bigg\{ \omega \in \Omega : \big( D^* + z \big) \subseteq C^{\beta}_{B \big([-2l^*, 2l^*]^{d-2} \times [-l^*, 3l^*] \times [0, z_d] \big)}(D^* + x) \mbox{ for some } z \in B \big([-l^*, l^*]^{d-2} \times [l^*, 2l^*] \times [s^*, 2s^*] \big) \mbox{ with } z_d \in 2\hat{k}\mathbb{N} \bigg\}$. Choose $v \in \{-1,1\}^{d-2}$ such that $x_iv_i \leq 0$ for all $i = 1, \ldots, d-2$.\footnote{Note that there may be more than one such $v$.} Define the following events:
		\begin{equation*}
			\begin{aligned}
				& A_x^1 := \left\{ \omega \in \Omega : 
				\begin{aligned}
					& \hspace{1 mm} \exists \hspace{1 mm} S_1 \subseteq \Big( C^{\beta}_{\big(B \big([-l^*, l^*]^{d-1} \times [0, t^*] \big) + x \big)}(D^* + x) \cap \big( F^{v}_{(d-1)^{+}}(l^*,t^*) + x \big) \Big) \mbox{ with } \vert S_1 \vert \geq m^*\\
					& \mbox{ such that for all distinct } y_1, y_2 \in S_1, L^{\infty}(y_1,y_2) \geq 4r^* + 8\hat{k} + 1 
				\end{aligned}
				\right\},\\
				& A_x^2 := \left\{ \omega \in \Omega : 
				\begin{aligned}
					& \hspace{1 mm} \exists \hspace{1 mm} y \in \Big( C^{\beta}_{\big(B \big([-l^*, l^*]^{d-1} \times [0, t^*] \big) + x \big)}(D^* + x) \cap \big( F^{v}_{(d-1)^{+}}(l^*,t^*) + x \big) \Big) \mbox{ such that}\\
					& \Big(B \big([-r^*, r^*]^{d-2} \times [0, 2r^*] \times [2r^* + 2\hat{k}, 2r^* + 4\hat{k}] \big) + y \Big) \subseteq C^{0}_{B \big([-r^*, r^*]^{d-2} \times [0, 2r^*] \times [0, 2r^* + 4\hat{k}] \big) + y \big)} (\{y\}) 
				\end{aligned}
				\right\}.
			\end{aligned}
		\end{equation*}
		By the construction of $v$, $\big( F^{v}_{(d-1)^{+}}(l^*,t^*) + x \big) \subseteq B \big([-l^*, l^*]^{d-2} \times l^* \times [0, t^*] \big)$. Since $2r^* \geq \max \{\beta_0, \ldots, \beta_{\hat{k}-1}\}$, $t^* = 2r^*$, $l^* \geq r^*$, $s^* = t^* + 2\hat{k}$, and $\big( F^{v}_{(d-1)^{+}}(l^*,t^*) + x \big) \subseteq B \big([-l^*, l^*]^{d-2} \times l^* \times [0, t^*] \big)$, it follows from the construction of $A_x^2$ that $A_x^2 \subseteq A_x$. So, we have
		\begin{equation}\label{equation lemma coupling case 2 2}
			\begin{aligned}
				\mathbb{P}_{p} \Big( A_x \Big) \geq \hspace{1 mm} & \hspace{1 mm} \mathbb{P}_{p} \Big( A_x^2 \Big)\\
				\geq \hspace{1 mm} & \hspace{1 mm} \mathbb{P}_{p} \Big( A_x^2 \mid A_x^1 \Big) \times \mathbb{P}_{p} \Big( A_x^1 \Big)\\
				\geq \hspace{1 mm} & \hspace{1 mm} \mathbb{P}_{p} \Big( A_x^2 \mid A_x^1 \Big) \times \mathbb{P}_{p} \bigg( A_x^1 \hspace{2 mm} \biggm| \hspace{2 mm} \Big| \Big( C^{\beta}_{\big(B \big([-l^*, l^*]^{d-1} \times [0, t^*] \big) + x \big)}(D^* + x) \cap \big( F^{v}_{(d-1)^{+}}(l^*,t^*) + x \big) \Big) \Big| \geq n^* \bigg)\\
				& \times \mathbb{P}_{p} \bigg( \Big| \Big( C^{\beta}_{\big(B \big([-l^*, l^*]^{d-1} \times [0, t^*] \big) + x \big)}(D^* + x) \cap \big( F^{v}_{(d-1)^{+}}(l^*,t^*) + x \big) \Big) \Big| \geq n^* \bigg).
			\end{aligned}
		\end{equation}
		Define the mapping $S_1 : A_x^1 \longrightarrow \mathcal{P}\big( \mathbb{H} \big)$ such that for all $\omega \in A_x^1$,
		\begin{enumerate}[(i)]
			\item $S_1(\omega) \subseteq \Big( C^{\beta}_{\big(B \big([-l^*, l^*]^{d-1} \times [0, t^*] \big) + x \big)}(D^* + x) \cap \big( F^{v}_{(d-1)^{+}}(l^*,t^*) + x \big) \Big)$, and
			
			\item $\vert S_1(\omega) \vert \geq m^*$, where $L^{\infty}(y_1,y_2) \geq 4r^* + 8\hat{k} + 1$ for all distinct $y_1,y_2 \in S_1(\omega)$.
		\end{enumerate}
		By the definition of $m^*$, $A_x^1$, $S_1$ and $A_x^2$, along with Remark \ref{remark probability inequality} and the construction of the $\beta$-backbend percolation process, we have
		\begin{equation}\label{equation lemma coupling case 2 3}
			\begin{aligned}
				& \mathbb{P}_{p} \Big( A_x^2 \mid A_x^1 \Big)
				\\
				\geq \hspace{1 mm} & \mathbb{P}_{p} \left( 
				\begin{aligned}
					& \exists \hspace{1 mm} y \in S_1 \mbox{ such that }\\
					& \Big(B \big([-r^*, r^*]^{d-2} \times [0, 2r^*] \times [2r^* + 2\hat{k}, 2r^* + 4\hat{k}] \big) + y \Big) \subseteq C^{0}_{\big(B \big([-r^*, r^*]^{d-2} \times [0, 2r^*] \times [0, 2r^* + 4\hat{k}] \big) + y \big)} (\{y\})
				\end{aligned}
				\hspace{3.5 mm} \vrule \hspace{3.5 mm} A_x^1 \right)
				\\
				> \hspace{1 mm} & 1 - \frac{\epsilon^*}{2}.
			\end{aligned}
		\end{equation}
		By the definition of $A_x^1$ and $n^*$,
		\begin{equation}\label{equation new case 2}
			\mathbb{P}_{p} \bigg( A_x^1 \hspace{2 mm} \biggm| \hspace{2 mm} \Big| \Big( C^{\beta}_{\big(B \big([-l^*, l^*]^{d-1} \times [0, t^*] \big) + x \big)}(D^* + x) \cap \big( F^{v}_{(d-1)^{+}}(l^*,t^*) + x \big) \Big) \Big| \geq n^* \bigg) = 1,
		\end{equation}
	and by Lemma \ref{lemma enough vertices case 2} and the construction of the $\beta$-backbend percolation process,
	\begin{equation*}
		\mathbb{P}_{p} \bigg( \Big| \Big( C^{\beta}_{\big(B \big([-l^*, l^*]^{d-1} \times [0, t^*] \big) + x \big)}(D^* + x) \cap \big( F^{v}_{(d-1)^{+}}(l^*,t^*) + x \big) \Big) \Big| \geq n^* \bigg) > 1 - \frac{\epsilon^*}{2}.
	\end{equation*}
This, together with \eqref{equation lemma coupling case 2 2}, \eqref{equation lemma coupling case 2 3}, \eqref{equation new case 2}, yields 
		\begin{equation}\label{equation lemma coupling case 2 4}
			\mathbb{P}_{p} \Big( A_x \Big) > 1 - \epsilon^*.
		\end{equation}

		Note that the event $A_x$ depends only on the edges in the set $\Big( B \big([-2l^*, 2l^*]^{d-2} \times [-l^*, 3l^*] \times [0, 2s^*] \big) \Big)^2$. Since this set is finite, the proof of Lemma \ref{lemma coupling case 2} follows from \eqref{equation lemma coupling case 2 4} by using an argument similar to the one which we use to complete the proof of Lemma \ref{lemma coupling case 1} from \eqref{equation lemma 3 final equation}.
		\hfill $\qed$

		\section{Proof of Proposition \ref{prop bounds two dimensional half slab}}\label{appendix proof of prop bounds two dimensional half slab}
		
		\setcounter{table}{0}
		\setcounter{figure}{0}
		\setcounter{equation}{0}

		Fix an arbitrary $l \in \mathbb{N}$. Using a similar argument as we have suggested for the proof of the ``if'' part of Theorem \ref{theorem iff condition half slabs}, we have $p^{\tilde{\beta}}_c (\mathbb{Q}_l^2) \leq p^{\beta}_c (\mathbb{Q}_l^2)$. We proceed to show that $p^{\beta}_c (\mathbb{Q}_{2l}^2) \leq p^{\tilde{\beta}}_c (\mathbb{Q}_l^2)$. In order to prove it, we show that for all $p \in [0,1]$, $\theta^{\tilde{\beta}}_{\mathbb{Q}_l^2}(p) > 0$ implies $\theta^{\beta}_{\mathbb{Q}_{2l}^2}(p) > 0$. If $p=1$, then there is nothing to prove. Fix an arbitrary $p \in [0,1)$ such that $\theta^{\tilde{\beta}}_{\mathbb{Q}_l^2}(p) > 0$.
		
		Since $\underset{n \rightarrow \infty}{\lim} \hspace{1 mm} \tilde{\beta}_{kn+i} = \beta_i$ for all $i \in \{0, \ldots, k-1\}$, there exists $n^* \in \mathbb{N}$ such that $\tilde{\beta}_{kn+i} = \beta_i$ for all $n \geq n^*$ and all $i \in \{0, \ldots, k-1\}$. Let $t^* \in k\mathbb{N}$ be such that $t^* \geq \max \{\beta_0, \ldots, \beta_{k-1}\}$. Let us define the following events:
		\begin{equation*}
			\begin{aligned}
				& A := \left\{ \omega \in \Omega : 
				\begin{aligned}
					& \hspace{1 mm} \mbox{there is an infinite open $\tilde{\beta}$-backbend path } (x^0, x^1, \ldots) \mbox{ in } \mathbb{Q}_l^2\\
					& \hspace{1 mm} \mbox{from the origin such that } x^j_d = kn^* + t^* \mbox{ for some } j
				\end{aligned}
				\right\}, \mbox{ and}\\
				& B := \left\{ \omega \in \Omega : 
				\begin{aligned}
					& \hspace{1 mm} \mbox{there is an infinite open $\tilde{\beta}$-backbend path } (x^0, x^1, \ldots) \mbox{ in } \mathbb{Q}_l^2\\
					& \hspace{1 mm} \mbox{from the origin such that } x^j_d < kn^* + t^* \mbox{ for all } j
				\end{aligned}
				\right\}.
			\end{aligned}
		\end{equation*}
		By the constructions of $A$ and $B$, we have $\mathbb{P}_p ( A \cup B ) = \theta^{\tilde{\beta}}_{\mathbb{Q}_l^2}(p)$. Furthermore, since $B \big([-l, l]^{d-2} \times \mathbb{Z} \times [0, kn^* + t^*] \big)$ is a one-dimensional cylinder, the fact $p < 1$ implies that $\mathbb{P}_p ( B ) = 0$. Combining the facts that $\theta^{\tilde{\beta}}_{\mathbb{Q}_l^2}(p) > 0$, $\mathbb{P}_p ( A \cup B ) = \theta^{\tilde{\beta}}_{\mathbb{Q}_l^2}(p)$, and $\mathbb{P}_p ( B ) = 0$, we have $\mathbb{P}_p ( A ) > 0$. For all $s \in \mathbb{N}$, let us define the following event:
		\begin{equation*}
			A_s := \left\{ \omega \in \Omega : 
			\begin{aligned}
				& \hspace{1 mm} \mbox{there is an infinite open $\tilde{\beta}$-backbend path } \pi = (x^0, x^1, \ldots) \mbox{ in } \mathbb{Q}_l^2\\
				& \hspace{1 mm} \mbox{from the origin such that } |x^j_{d-1}| < s \mbox{ for all } j \mbox{ with } h^j(\pi) < kn^* + t^*
			\end{aligned}
			\right\}.
		\end{equation*}
		By the constructions of $A$ and $A_s$, it follows that $A_s \underset{s \rightarrow \infty}{\uparrow} A $. Since $\mathbb{P}_p ( A ) > 0$, this implies that there exists $s^* \in \mathbb{N}$ such that 
		\begin{equation}\label{equation positive percolation prob}
			\mathbb{P}_p ( A_{s^*} ) > 0.
		\end{equation}

		\begin{claim}\label{claim subset of beta}
			$A_{s^*} \subseteq \bigg\{ \omega \in \Omega : \Big| C^{\beta}_{B \big([-l, l]^{d-2} \times \mathbb{Z} \times [kn^*, \infty] \big)} \Big( B \big([-l, l]^{d-2} \times [-s^*, s^*] \times kn^* \big) \Big) \Big| = \infty \bigg\}$.
		\end{claim}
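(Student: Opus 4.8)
The plan is to take, on the event $A_{s^*}$, a witnessing infinite open $\tilde{\beta}$-backbend path $\pi = (x^0, x^1, \ldots)$ in $\mathbb{Q}_l^2$ from the origin, truncate it at a suitable point, and check that the resulting tail is an infinite open $\beta$-backbend path in $B \big([-l, l]^{d-2} \times \mathbb{Z} \times [kn^*, \infty] \big)$ starting from a vertex of $B \big([-l, l]^{d-2} \times [-s^*, s^*] \times kn^* \big)$. First I would note that since $A_{s^*} \subseteq A$, the path $\pi$ attains $d$-th coordinate $kn^* + t^*$; let $j^*$ be the smallest index with $x^{j^*}_d = kn^* + t^*$, and let $j_0$ be the largest index $j < j^*$ with $x^j_d = kn^*$, which exists because along $\pi$ the $d$-th coordinate changes by $\pm 1$ at each step and climbs from $0$ to $kn^* + t^*$. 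Set $\hat{\pi} := (x^{j_0}, x^{j_0+1}, \ldots)$, which is infinite and open (its edges are edges of $\pi$) and lies in $\mathbb{Q}_l^2$.

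Next I would show that $\hat{\pi}$ never leaves the slab $\{x_d \geq kn^*\}$. For $j_0 \leq j \leq j^*$ this holds because $j_0$ is the last visit to level $kn^*$ before $j^*$ and the path cannot drop below level $kn^*$ on $(j_0, j^*]$ without revisiting it; for $j > j^*$ the record level $h^j(\pi)$ is at least $kn^* + t^*$, and since $t^* \geq \max\{\beta_0, \ldots, \beta_{k-1}\}$, since $\beta$ is $k$-cyclic, and since $\tilde{\beta}_m = \beta_m$ for all $m \geq kn^*$ (by the choice of $n^*$), the $\tilde{\beta}$-backbend inequality gives $x^j_d \geq h^j(\pi) - \tilde{\beta}_{h^j(\pi)} = h^j(\pi) - \beta_{h^j(\pi)} \geq h^j(\pi) - t^* \geq kn^*$. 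The base vertex $x^{j_0}$ has $x^{j_0}_d = kn^*$ by construction, has its first $d-2$ coordinates in $[-l,l]$ because $\pi$ lies in $\mathbb{Q}_l^2$, and satisfies $|x^{j_0}_{d-1}| < s^*$ because the record level of $\pi$ at $x^{j_0}$ is $< kn^* + t^*$ (as $j_0 < j^*$) and $\omega \in A_{s^*}$; hence $x^{j_0} \in B \big([-l, l]^{d-2} \times [-s^*, s^*] \times kn^* \big)$.

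Finally I would verify that $\hat{\pi}$ is a $\beta$-backbend path in the sense of Definition \ref{def beta backbend path}. Writing $\hat{h}^i$ for the record level of $\hat{\pi}$ up to its $i$-th vertex and $H_i := h^{j_0+i}(\pi) = \max\{h^{j_0}(\pi), \hat{h}^i\}$, one has $H_i \geq x^{j_0}_d = kn^*$, so $\tilde{\beta}_{H_i} = \beta_{H_i}$ and the $\tilde{\beta}$-backbend property of $\pi$ yields $x^{j_0+i}_d \geq H_i - \beta_{H_i}$; since the hypothesis $\beta_{i+1} \leq \beta_i + 1$ (which holds for every index by $k$-cyclicity) makes $m \mapsto m - \beta_m$ non-decreasing on $\mathbb{Z}_+$ and $H_i \geq \hat{h}^i$, this gives $x^{j_0+i}_d \geq \hat{h}^i - \beta_{\hat{h}^i}$, exactly the required inequality. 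Thus $\hat{\pi}$ is an infinite open $\beta$-backbend path of the required type, so all of its infinitely many distinct vertices lie in the set $C^{\beta}_{B([-l, l]^{d-2} \times \mathbb{Z} \times [kn^*, \infty])}\big( B([-l, l]^{d-2} \times [-s^*, s^*] \times kn^*) \big)$, establishing the claimed inclusion. I expect the main obstacle to be the second step --- controlling the tail of $\pi$ so that it stays at level $\geq kn^*$ --- since that is where one must simultaneously exploit that $t^*$ dominates $\max_i \beta_i$, that $\tilde{\beta}$ has already stabilized to the $k$-cyclic $\beta$ above level $kn^*$, and the monotonicity coming from $\beta_{i+1} \leq \beta_i + 1$; once that is in place, the $\beta$-backbend check is a one-line consequence of the same monotonicity.
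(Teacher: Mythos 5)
Your proof is correct and takes essentially the same approach as the paper's: it truncates the witnessing $\tilde{\beta}$-backbend path at its last visit to level $kn^*$ before it first reaches level $kn^*+t^*$, checks that the tail stays at level $\geq kn^*$ by splitting at the first visit to $kn^*+t^*$, and uses the stabilization $\tilde{\beta}_m = \beta_m$ for $m \geq kn^*$ together with the monotonicity of $m \mapsto m - \beta_m$ (from $\beta_{i+1} \leq \beta_i + 1$ and $k$-cyclicity) to verify the $\beta$-backbend property. The only cosmetic difference is that the paper phrases the final backbend check as a proof by contradiction, whereas you give the equivalent direct computation.
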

		
		\begin{claimproof}[\textbf{Proof of Claim \ref{claim subset of beta}}]
			Fix an arbitrary configuration $\omega^* \in A_{s^*}$. Let $\pi = (x^0, x^1, \ldots)$ be an infinite open $\tilde{\beta}$-backbend path in $\mathbb{Q}_l^2$ from the origin such that $|x^j_{d-1}| < s^*$ for all $j$ with $h^j(\pi) < kn^* + t^*$. Since $B \big([-l, l]^{d-2} \times [-s^*, s^*] \times [0, kn^* + t^*] \big)$ is a finite set of vertices, it follows from the assumptions on $\pi$ that there exists a finite sub-path $\pi_1 = (x^0, \ldots, x^{m^*})$ of $\pi$ from the origin such that $\pi_1$ is in $B \big([-l, l]^{d-2} \times [-s^*, s^*] \times [0, kn^* + t^*] \big)$ and $x^{m^*}_d = kn^* + t^*$. Let $z^*$ be the last vertex of $\pi_1$ such that $z^*_d = kn^*$. Note that by construction, $z^* \in B \big([-l, l]^{d-2} \times [-s^*, s^*] \times kn^* \big)$. Consider the sub-path $\pi^*$ of $\pi$ from $z^*$. Clearly, $\pi^*$ is an infinite open path.
			
			First, we show that $\pi^*$ is in $B \big([-l, l]^{d-2} \times \mathbb{Z} \times [kn^*, \infty] \big)$. We divide $\pi^*$ into two parts. The first part $\pi^*_1$ is the sub-path of $\pi^*$ from $z^*$ to $x^{m^*}$, and the second part $\pi^*_2$ is the sub-path of $\pi^*$ from $x^{m^*}$. 
			By construction, $\pi^*_1$ is in $B \big([-l, l]^{d-2} \times [-s^*, s^*] \times [kn^*, kn^* + t^*] \big)$. 
			Furthermore, because of the facts that $\pi$ is a $\tilde{\beta}$-backbend path in $\mathbb{Q}_l^2$, $x^{m^*}_d = kn^* + t^*$, $\tilde{\beta}_{i} = \beta_i$ for all $i \geq kn^*$, and $t^* \geq \max \{\beta_0, \ldots, \beta_{k-1}\}$, it follows by construction that $\pi^*_2$ is in $B \big([-l, l]^{d-2} \times \mathbb{Z} \times [kn^*, \infty] \big)$. 
			Since $\pi^*_1$ is in $B \big([-l, l]^{d-2} \times [-s^*, s^*] \times [kn^*, kn^* + t^*] \big)$, and $\pi^*_2$ is in $B \big([-l, l]^{d-2} \times \mathbb{Z} \times [kn^*, \infty] \big)$, it follows that $\pi^*$ is in $B \big([-l, l]^{d-2} \times \mathbb{Z} \times [kn^*, \infty] \big)$.
			
			In order to complete the proof of Claim \ref{claim subset of beta}, it remains to show that $\pi^*$ is a $\beta$-backbend path.
			Assume for contradiction that $\pi^*$ is not a $\beta$-backbend path. Since $\pi^*$ is not a $\beta$-backbend path, there exists a vertex $x^*$ in $\pi^*$ such that $x^*_d < h^* - \beta_{h^*}$, where $h^*$ is the record level attained by the path $\pi^*$ till $x^*$. Let $\hat{h}$ be the record level attained by the path $\pi$ till $x^*$. Clearly, $\hat{h} \geq h^* \geq kn^*$. Because $\pi$ is a $\tilde{\beta}$-backbend path, it must be that $x^*_d \geq \hat{h} - \tilde{\beta}_{\hat{h}}$. Since $\hat{h} \geq kn^*$ and $\tilde{\beta}_{i} = \beta_i$ for all $i \geq kn^*$, this implies $x^*_d \geq \hat{h} - \beta_{\hat{h}}$. Combining the facts that $x^*_d < h^* - \beta_{h^*}$ and $x^*_d \geq \hat{h} - \beta_{\hat{h}}$, we have
			\begin{equation}\label{equation beta subpath}
				h^* - \beta_{h^*} > \hat{h} - \beta_{\hat{h}}.
			\end{equation}
			The assumptions on $\beta$ imply that $\beta_l - \beta_m \leq l-m$ for all $l \geq m$. Since $\hat{h} \geq h^*$, this yields $\beta_{\hat{h}} - \beta_{h^*} \leq \hat{h} - h^*$, a contradiction to \eqref{equation beta subpath}. So, it must be that $\pi^*$ is a $\beta$-backbend path. This completes the proof of Claim \ref{claim subset of beta}. 		
		\end{claimproof}

		Now, we complete the proof of Proposition \ref{prop bounds two dimensional half slab}. Because $B \big([-l, l]^{d-2} \times [-s^*, s^*] \times kn^* \big)$ is a finite set, \eqref{equation positive percolation prob} and Claim \ref{claim subset of beta}  together imply that there exists some $x \in B \big([-l, l]^{d-2} \times [-s^*, s^*] \times kn^* \big)$ such that $\mathbb{P}_p \Big( \big| C^{\beta}_{(x + \mathbb{Q}_{2l}^2)} (\{x\}) \big| = \infty \Big) > 0$. Furthermore, since $\beta$ is $k$-cyclic and $x_d = kn^*$, by the construction of the $\beta$-backbend percolation process, we have $\mathbb{P}_p \Big( \big| C^{\beta}_{\mathbb{Q}_{2l}^2} \big| = \infty \Big) = \mathbb{P}_p \Big( \big| C^{\beta}_{(x + \mathbb{Q}_{2l}^2)} (\{x\}) \big| = \infty \Big)$, and hence $\mathbb{P}_p \Big( \big| C^{\beta}_{\mathbb{Q}_{2l}^2} \big| = \infty \Big) > 0$. This completes the proof of Proposition \ref{prop bounds two dimensional half slab}.
		\hfill $\qed$

	\end{appendices}
	
	\bibliographystyle{plainnat}
	\setcitestyle{numbers}
	\bibliography{mybib}

\end{document}